\newtheorem{theorem}{Theorem}
\theoremstyle{plain}
\newtheorem{lemma}{Lemma}
\newtheorem{remark}{Remark}
\numberwithin{equation}{section}
\begin{document}
\title[Limit theorems for Multifractal Products]{Limit theorems for
Multifractal Products of Geometric Stationary Processes}
\author{D.E. Denisov}
\address[D.E. Denisov]{ School of Mathematics \\
University of Manchester, Oxford Road \\
Manchester M13 9PL, UK}
\email{denis.denisov@manchester.ac.uk}
\author{N.N. Leonenko}
\address[N.N. Leonenko]{ School of Mathematics \\
Cardiff University, Senghennydd Road \\
Cardiff CF24 4YH, UK}
\email{LeonenkoN@Cardiff.ac.uk}
\subjclass[2000]{Primary 60G57; 60G10; 60G17}
\keywords{Multifractal products, stationary processes, short-range
dependence, long-range dependence, multifractal scenarios, R\'{e}nyi
function, geometric Gaussian process, geometric Ornstein-Uhlenbeck
processes, L\'{e}vy processes, superpositions, scaling of moments,
log-normal scenario, log-gamma scenario, log normal tempered stable
scenario, log-variance gamma scenario}

\begin{abstract}
We investigate the properties of multifractal products of geometric Gaussian
processes with possible long-range dependence and geometric
Ornstein-Uhlenbeck processes driven by L\'{e}vy motion and their finite and
infinite superpositions. We present the general conditions for the $\mathcal{%
L}_{q}$ convergence of cumulative processes to the limiting processes and
investigate their $q$-th order moments and R\'{e}nyi functions, which are
nonlinear, hence displaying the multifractality of the processes as
constructed. We also establish the corresponding scenarios for the limiting
processes, such as log-normal, log-gamma, log-tempered stable or log-normal
tempered stable scenarios.
\end{abstract}

\maketitle


\section{Introduction}

Multifractal models have been used in many applications in hydrodynamic
turbulence, finance, genomics, computer network traffic, etc. (see, for
example, Kolmogorov 1941, 1962, Kahane 1985, 1987, Novikov 1994, Frisch
1995, Mandelbrot 1997, Falconer 1997, Schertzer \emph{et al.} 1997, Harte
2001, Reidi 2003). There are many ways to construct random multifractal
models ranging from simple binomial cascades to measures generated by
branching processes and the compound Poisson process (Kahane 1985, 1987,
Falconer 1997, Schmitt and Marsan 2001, Harte 2001, Barral and Mandelbrot
2002, 2010, Bacry and Muzy 2003, Riedi 2003, M\"{o}rters and Shieh 2004,
Shieh and Taylor 2002, Schmitt 2003, Schertzer \emph{et al.} 1997, Barall et
al. 2009, Ludena 2008, Jaffard \emph{et al.} 2010). Jaffard (1999) showed
that L\'{e}vy processes (except Brownian motion and Poisson processes) are
multifractal; but since the increments of a L\'{e}vy process are
independent, this class excludes the effects of dependence structures.
Moreover, L\'{e}vy processes have a linear singularity spectrum while real
data often exhibit a strictly concave spectrum.

Anh, Leonenko and Shieh (2008a,b, 2009a,b, 2010) considered multifractal
products of stochastic processes as defined in Kahane (1985, 1987) and
Mannersalo, Norros and Riedi (2002). Especially Anh, Leonenko and Shieh
(2008a) constructed multifractal processes based on products of geometric
Ornstein-Uhlenbeck (OU) processes driven by L\'{e}vy motion with inverse
Gaussian or normal inverse Gaussian distribution. They also described the
behaviour of the $q$-th order moments and R\'{e}nyi functions, which are
nonlinear, hence displaying the multifractality of the processes as
constructed. In these papers a number of scenarios were obtained for $q\in
Q\cap \lbrack 1,2]$, where $Q$ is a set of parameters of marginal
distribution of an OU processes driven by L\'{e}vy motion. The simulations
show that for $q$ outside this range, the scenarios still hold (see Anh,
Leonenko, Shieh and Taufer (2010)). In this paper we present a rigorous
proof of these results and also construct new scenarios which generalize
those corresponding to the inverse Gaussian and normal inverse Gaussian
distributions obtained in Anh and Leonenko (2008), Anh, Leonenko and Shieh
(2008a). We use the theory of OU processes with tempered stable law and
normal tempered stable law for their marginal distributions (see
Barndorff-Nielsen and Shephard 2002 and the references therein). Note that
in their pioneering paper Calvet and Fisher (2002) proposed the simplified
version of the construction of Mannersalo, Norros and Riedi (2002).

The next section recaptures some basic results on multifractal products of
stochastic processes as developed in Kahane (1985, 1987) and Mannersalo,
Norros and Riedi (2002). Section 3 contains the general $\mathcal{L}_{q}$
bounds for cumulative process of multifractal products of stationary
processes. Section 4 establishes the general results on the scaling moments
of multifractal products of geometric OU processes in terms of the marginal
distributions of OU processes and their L\'{e}vy measures. Similar results
for the finite and infinite superpositions of OU processes are proved in the
Section 7. The number of multifractal scenarios with exact forms of 
the scaling function are given in the Sections 8-11.

Our exposition extends results of Mannersalo, Norros and Riedi (2002) on the
basic properties of multifractal products of stochastic processes. We should
also note some related results by Barndorff-Nielsen and Schmiegel (2004) who
introduced some L\'{e}vy-based spatiotemporal models for parametric
modelling of turbulence. Log-infinitely divisible scenarios related to
independently scattered random measures were investigated in Schmitt and
Marsan (2001), Schmitt (2003), Bacry and Muzy (2003), Rhodes and Vargas
(2010), see also their references.

\section{Multifractal products of stochastic processes}

This section recaptures some basic results on multifractal products of
stochastic processes as developed in Kahane (1985, 1987) and Mannersalo,
Norros and Riedi (2002). We provide an interpretation of their conditions
based on the moment generating functions, which is useful for our
exposition. Throughout the text the notation $C,c$ is used for the generic
constants which do not necessarily coincide.

We introduce the following conditions:

\begin{itemize}
\item[\textbf{A}$^{\prime }.$] Let $\Lambda (t),$ $\ t\in \mathbb{R}%
_{+}=[0,\infty ),$ be a measurable, separable, strictly stationary, positive
stochastic process with $\mathrm{E}\Lambda (t)=1$.

We call this process the mother process and consider the following setting:

\item[\textbf{A}$^{\prime \prime }.$] Let $\Lambda(t)=\Lambda
^{(i)},~i=0,1,...$ be independent copies of the mother process $\Lambda ,$
and $\Lambda _{b}^{(i)}$ be the rescaled version of $\Lambda ^{(i)}:$ 
\begin{equation*}
\Lambda _{b}^{(i)}(t)\overset{d}{=}\Lambda ^{(i)}(tb^{i}),\text{\quad }t\in 
\mathbb{R}_{+},\text{\quad }i=0,1,2,\ldots ,
\end{equation*}%
where the scaling parameter $b>1,$ and $\overset{d}{=}$ denotes equality in
finite-dimensional distributions. \ 

Moreover, in the examples, the stationary mother process satisfies the
following conditions:

\item[\textbf{A}$^{\prime \prime \prime }.$] Let $\Lambda (t)=\exp \{X(t)\},$
$t\in \mathbb{R}_{+},$ where $X$ $\left( t\right) $ is a strictly stationary
process, such that there exist a marginal probability density function $\pi
(x)$ and a bivariate probability density function $%
p(x_{1},x_{2};t_{1}-t_{2}) $. Moreover, we assume that the moment generating
function 
\begin{equation}
M(\zeta )=\mathrm{E}\exp \{\zeta X(t)\}  \label{eq:mgf1}
\end{equation}%
and the bivariate moment generating function 
\begin{equation}
M(\zeta _{1},\zeta _{2};t_{1}-t_{2})=\mathrm{E}\exp \{\zeta
_{1}X(t_{1})+\zeta _{2}X(t_{2})\}  \label{eq:mgf2}
\end{equation}%
exist.
\end{itemize}

The conditions \textbf{A}$^{\prime }$-\textbf{A}$^{\prime \prime \prime }$
yield%
\begin{equation*}
\mathrm{E}\Lambda _{b}^{(i)}(t)=M(1)=1;\text{\textrm{Var}}\Lambda
_{b}^{(i)}(t)=M(2)-1=\sigma _{\Lambda }^{2}<\infty ;
\end{equation*}%
\begin{equation*}
\text{\textrm{Cov}}(\Lambda _{b}^{(i)}(t_{1}),\Lambda
_{b}^{(i)}(t_{2}))=M(1,1;(t_{1}-t_{2})b^{i})-1,~b>1.
\end{equation*}%
We define the finite product processes 
\begin{equation}
\Lambda _{n}(t)=\prod_{i=0}^{n}\Lambda _{b}^{(i)}(t)=\exp \left\{
\sum_{i=0}^{n}X^{(i)}(tb^{i})\right\} ,t\in \lbrack 0,1],  \label{2.1}
\end{equation}%
and the cumulative processes 
\begin{equation}
A_{n}(t)=\int_{0}^{t}\Lambda _{n}(s)ds,\text{\quad }n=0,1,2,\ldots ,t\in
\lbrack 0,1],  \label{2.2}
\end{equation}%
where $X^{(i)}(t),i=0,...,n,....,$ are independent copies of a stationary
process $X(t),t\geq 0.$

We also consider the corresponding positive random measures defined on Borel
sets $B$ of $\mathbb{R}_{+}:$%
\begin{equation}
\mu _{n}(B)=\int_{B}\Lambda _{n}(s)ds,\text{\quad }n=0,1,2,\ldots
\label{2.3}
\end{equation}%
Kahane (1987) proved that the sequence of random measures $\mu _{n}$
converges weakly almost surely to a random measure $\mu $. \ Moreover, given
a finite or countable family of Borel sets $B_{j}$ on $\mathbb{R}_{+}$, it
holds that $\lim_{n\rightarrow \infty }\mu _{n}(B_{j})=\mu (B_{j})$ for all $%
j$ with probability one$.$ The almost sure convergence of $A_{n}\left(
t\right) $ in countably many points of $\mathbb{R}_{+}$ can be extended to
all points in $\mathbb{R}_{+}$ if the limit process $A\left( t\right) $ is
almost surely continuous. In this case, $\lim_{n\rightarrow \infty
}A_{n}(t)= $ $A(t)$ with probability one for all $t\in \mathbb{R}_{+}.$ As
noted in Kahane (1987), there are two extreme cases: (i) $%
A_{n}(t)\rightarrow A(t)$ in $\mathcal{L}_{1}$ for each given $t$, in which
case $A(t)$ is not almost surely zero and and is said to be fully active
(non-degenerate) on $\mathbb{R}_{+}$; (ii) $A_{n}(1)$ converges to $0$
almost surely, in which case $A(t)$ is said to be degenerate on $\mathbb{R}%
_{+}$. Sufficient conditions for non-degeneracy and degeneracy in a general
situation \ and relevant examples are provided in Kahane (1987) (Eqs. (18)
and (19) respectively.) The condition for complete degeneracy is detailed in
Theorem 3 of Kahane (1987). In our work we present general conditions for
non-degeneracy in Theorem~\ref{thm:scaling}.

The R\'{e}nyi function of a random measure $\mu $, also known as the
deterministic partition function, is defined for $t\in \lbrack 0,1]$ as%
\begin{equation*}
T(q)=\underset{n\rightarrow \infty }{\lim \inf }\frac{\log \mathrm{E}%
\sum_{k=0}^{2^{n}-1}\mu ^{q}\left( I_{k}^{(n)}\right) }{\log \left\vert
I_{k}^{(n)}\right\vert }=\underset{n\rightarrow \infty }{\lim \inf }\left( -%
\frac{1}{n}\right) \log _{2}\mathrm{E}\sum_{k=0}^{2^{n}-1}\mu ^{q}\left(
I_{k}^{(n)}\right) ,
\end{equation*}%
where $I_{k}^{(n)}=\left[ k2^{-n},(k+1)2^{-n}\right] ,$\quad $k=0,1,\ldots
,2^{n}-1,$ $\left\vert I_{k}^{(n)}\right\vert $ is its length, and $\log
_{b} $ is log to the base $b.$


In the present paper we establish convergence 
\begin{equation}
A_{n}(t)\overset{\mathcal{L}_{q}}{\rightarrow }A(t),\quad n\rightarrow
\infty .  \label{e319}
\end{equation}%
For the limiting process we show that for some constants $\overline{C}$ and $%
\underline{C}$, 
\begin{equation}
\underline{C}t^{q-\log _{b}\mathrm{E}\Lambda ^{q}\left( t\right) }\leqslant 
\mathrm{E}A^{q}(t)\leqslant \overline{C}t^{q-\log _{b}\mathrm{E}\Lambda
^{q}\left( t\right) },  \label{2.6}
\end{equation}%
which will be written as 
\begin{equation*}
\mathrm{E}A^{q}(t)\sim t^{q-\log _{b}\mathrm{E}\Lambda ^{q}\left( t\right) }.
\end{equation*}%
This allows us to find the scaling function 
\begin{equation}
\varsigma (q)=q-\log _{b}\mathrm{E}\Lambda ^{q}\left( t\right) =q-\log
_{b}M(q).  \label{2.7}
\end{equation}%
As is shown in \cite{LS2013} for the exponentially decreasing correlations
and $q\in \lbrack 1,2]$ there is a connection between R\'{e}nyi function and
the scaling function given by 
\begin{equation}
T(q)=\varsigma (q)-1.  \label{2.7a}
\end{equation}%
The exact conditions are stated in Theorem~\ref{Th3.1} and Theorem~\ref%
{thm:scaling}.

An important contribution of our paper is that we proved (\ref{e319}) for
general $q>0$. In comparison, in Mannersalo, Norros and Riedi (2002)
convergence (\ref{e319}) was shown for $q\in [1,2]$ under an additional
assumption $A(t)\in \mathcal{L}_q$. Additionally we simplified significantly
the conditions under which equations (\ref{e319}) and (\ref{2.6}) hold.
Finally we provide a number of scenarios where scaling function can be
written explicitly.


\section{$\mathcal{L}_{q}$ convergence: general bound}

This section contains a generalisation of the basic results on multifractal
products of stochastic processes developed in Kahane (1985, 1987) and
Mannersalo, Norros and Riedi\emph{\ }(2002).

Consider the cumulative process $A_{n}(t)$ defined in (\ref{2.2}). For fixed 
$t$, the sequence $\{A_{n}(t),\mathcal{F}_{n}\}_{n=0}^{\infty }$ is a
martingale. It is well known that for $q>1$, $\mathcal{L}_{q}$ convergence
is equivalent to the finiteness of 
\begin{equation*}
\sup_{n}\mathrm{E}A_{n}^{q}(t)<\infty .
\end{equation*}

\subsection{$\mathcal{L}_{2}$ convergence}

First we consider a simpler case $q=2$, which was studied in Mannersalo 
\emph{et al. }(2002). The proof in the general case uses the same idea but
is more complicated.

We have, 
\begin{equation*}
\mathrm{E}A_{n}^{2}(t)=\mathrm{E}\int_{0}^{t}\int_{0}^{t}\Lambda
_{n}(s_{1})\Lambda
_{n}(s_{2})ds_{1}ds_{2}=\int_{0}^{t}\int_{0}^{t}\prod_{i=0}^{n}\mathrm{E}%
\Lambda ^{(i)}(s_{1})\Lambda ^{(i)}(s_{2})ds_{1}ds_{2}.
\end{equation*}%
The process $\Lambda ^{(i)}$ is stationary. Therefore, 
\begin{eqnarray*}
\mathrm{E}A_{n}^{2}(t) &=&2\int_{0}^{t}\int_{s_{1}}^{t}\prod_{i=0}^{n}%
\mathrm{E}\Lambda ^{(i)}(0)\Lambda ^{(i)}(s_{2}-s_{1})ds_{1}ds_{2} \\
&=&2\int_{0}^{t}\int_{0}^{t-s_{1}}\prod_{i=0}^{n}\rho
(b^{i}(s_{2}-s_{1})ds_{1}ds_{2}\leq 2t\int_{0}^{t}\prod_{i=0}^{n}\rho
(b^{i}u)du,
\end{eqnarray*}%
where 
\begin{equation}
\rho (u)=\mathrm{E}\Lambda (0)\Lambda (u).  \label{ro1}
\end{equation}%
Hence, to show $\mathcal{L}_{2}$ convergence it is sufficient to show that 
\begin{equation*}
\sup_{n}\int_{0}^{t}\prod_{i=0}^{n}\rho (b^{l}u)du<\infty .
\end{equation*}

\begin{theorem}
\label{lem1} Assume that $\rho (u)$ as defined in (\ref{ro1}) is monotone
decreasing in $u$, 
\begin{equation}
b>\mathrm{E}\Lambda (0)^{2}  \label{eq101}
\end{equation}%
and 
\begin{equation}
\sum_{i=0}^{\infty }(\rho (b^{i})-1)<\infty .  \label{eq100}
\end{equation}%
Then $A_{n}(t)$ converges in $\mathcal{L}_{2}$ (and hence in $\mathcal{L}_{q}$ for $q\in[0,2]$) for every fixed $t\in[0,1]$.
\end{theorem}

\textsc{Proof.} 
First note that  $\mathcal{L}_{2}$ convergence implies $\mathcal{L}_{q}$ convergence for all $q\in [0,2]$. 
This follows from the inequality $\mathrm{E}|A_n(t)-A(t)|^s\le (\mathrm{E}|A_n(t)-A(t)|^2)^{s/2}$ valid for any $s\le 2$. In turn the latter inequality follows from the Jensen inequality. 

Without loss of generality let $t=1$. Let $n(u)=[-\log
_{b}u]$ be the integer part of $-\log _{b}u$. Then, using monotonicity of $%
\rho $ we obtain 
\begin{equation*}
\prod_{i=0}^{n}\rho (b^{i}u)\leq \rho
(0)^{n(u)}\prod_{i=n(u)}^{n}\rho (b^{i}u).
\end{equation*}%
Using monotonicity of $\rho $ again, 
\begin{equation*}
\prod_{i=n(u)}^{n}\rho (b^{i}u)\leq \prod_{i=0}^{n-n(u)}\rho
(b^{i+n(u)}u)\leq \Pi:=\prod_{i=0}^{\infty }\rho (b^{i}).
\end{equation*}%
Constant $\Pi $ is finite due to the condition (\ref{eq100}). For
sufficiently small $\delta \in (0,1)$, by the condition (\ref{eq101}), $%
b^{1-\delta }>\rho (0)=\mathrm{E}\Lambda (0)^{2}$ . Therefore, 
\begin{equation*}
\sup_{n}\int_{0}^{1}\prod_{i=0}^{n}\rho (b^{i}u)du\leq \Pi \int_{0}^{1}\rho
(0)^{n(u)}du\leq \Pi \int_{0}^{1}b^{(1-\delta )n(u)}du\leq \Pi \int_{0}^{1}%
\frac{1}{u^{1-\delta }}du<\infty .
\end{equation*}%
The proof of Theorem \ref{lem1} is complete.

\qed

\subsection{$\mathcal{L}_{q}$ convergence for $q>2$}

Now we are going to consider $q>2$. Now we assume additionally that $%
A_{n}(t) $ is a cadlag process. Also, we strengthen condition \eqref{eq100}.
For that let 
\begin{equation}
\rho (u_{1},\ldots ,u_{q-1})=\mathrm{E}\Lambda (0)\Lambda (u_{1})\ldots
\Lambda (u_{1}+\cdots +u_{q-1})  \label{RO}
\end{equation}
We require that the function $\rho (u_{1},\ldots ,u_{q-1})$ satisfies
certain mixing conditions. Namely, let $m<q-1$ and ${\mathcal{C}}%
=\{i_1,\ldots i_m\}$ be a subset of indices ordered in the increasing order $%
1\le i_{1}< \ldots <i_{m}\le q-1$. Consider the vector $(u_1,\ldots,u_{q-1})$
such that $u_j=A$ if $j\in \mathcal{C}$ and $u_j=0$ otherwise. Then we
assume that for any set $\mathcal{C}$ the following mixing condition holds 
\begin{equation}
\lim_{A\rightarrow \infty }\rho (u_{1},\ldots ,u_{q-1})=\mathrm{E}\Lambda
(0)^{i_{1}}\mathrm{E}\Lambda (0)^{i_{2}-i_{1}}\cdot \ldots \cdot \mathrm{E}%
\Lambda (0)^{q-i_{m}}.  \label{eq:mixing}
\end{equation}

The starting point is the equality 
\begin{align}
\mathrm{E}A_{n}^{q}(t)& =\mathrm{E}\int_{0}^{t}\int_{0}^{t}\ldots
\int_{0}^{t}\Lambda _{n}(s_{1})\Lambda _{n}(s_{2})\ldots \Lambda
_{n}(s_{q})ds_{1}ds_{2}\ldots ds_{q}  \notag \\
& =q!\int_{0<s_{1}<\ldots <s_{q}<t}\mathrm{E}\Lambda _{n}(s_{1})\Lambda
_{n}(s_{2})\ldots \Lambda _{n}(s_{p})ds_{1}ds_{2}\ldots ds_{q}.  \label{eqsp}
\end{align}%
First we make change of variables%
\begin{equation*}
u_{0}=s_{1},u_{1}=s_{2}-s_{1},\ldots u_{q-1}=s_{q}-s_{q-1},
\end{equation*}
which transforms equality (\ref{eqsp}) into 
\begin{align*}
\mathrm{E}A_{n}^{q}(t)& =q!\int_{0<u_{0},\ldots ,u_{q-1}}^{u_{0}+\cdots
+u_{q-1}\leq t}\mathrm{E}\Lambda _{n}(u_{0})\Lambda _{n}(u_{0}+u_{1})\ldots
\Lambda _{n}(u_{0}+\cdots +u_{q-1})du_{0}\ldots du_{q-1} \\
& \leq q!\int_{0<u_{0},\ldots ,u_{q-1}<t}\mathrm{E}\Lambda
_{n}(u_{0})\Lambda _{n}(u_{0}+u_{1})\ldots \Lambda _{n}(u_{0}+\cdots
+u_{q-1})du_{0}du_{1}\ldots du_{q-1} \\
& =q!\int_{0<u_{1},\ldots ,u_{q-1}<t}\mathrm{E}\Lambda _{n}(0)\Lambda
_{n}(u_{1})\ldots \Lambda _{n}(u_{1}+\cdots +u_{q-1})du_{1}\ldots du_{q-1},
\end{align*}%
where we used stationarity of the process $\Lambda (t)$ to obtain the latter
inequality. Thus it is sufficient to prove that 
\begin{equation}
\sup_{n}\int_{0<u_{1},\ldots ,u_{q-1}<t}\prod_{l=0}^{n}\rho
(b^{l}u_{1},\ldots ,b^{l}u_{q-1})du_{1}\ldots du_{q-1}<\infty .
\label{eq:eq_N2}
\end{equation}

We are ready now to state the main result of this section.

\begin{theorem}
\label{Th3.1}Suppose that conditions \textbf{A}$^{\prime }$-\textbf{A}$%
^{\prime \prime }$ hold. Assume that $\rho (u_{1},\ldots ,u_{q-1})$ defined
in (\ref{RO}) is monotone decreasing in all variables. Let 
\begin{equation}
b^{q-1}>\mathrm{E}\Lambda (0)^{q}  \label{eq:b.greater}
\end{equation}%
for some integer $q\ge 2,$ and 
\begin{equation}
\sum_{n=1}^{\infty }\left( \rho (b^{n},\ldots ,b^{n})-1\right) <\infty .
\label{eq:sum_rho_finite}
\end{equation}%
Finally assume that the mixing condition (\ref{eq:mixing}) holds. Then, 
\begin{equation}
\mathrm{E}A(t)^{q}<\infty ,  \label{eq:lq.finite}
\end{equation}%
and $A_{n}(t)$ converges to $A(t)$ in $\mathcal{L}_{q}$ (and hence in $\mathcal{L}_{\widetilde q}$ for $\widetilde q\in[0,q]$).
\end{theorem}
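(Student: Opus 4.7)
The reduction preceding the theorem already shows that it is enough to prove
\begin{equation*}
\sup_{n}\int_{(0,t)^{q-1}}\prod_{l=0}^{n}\rho(b^{l}u_{1},\ldots ,b^{l}u_{q-1})\,du_{1}\cdots du_{q-1}<\infty ,
\end{equation*}
since $(A_{n}(t),\mathcal{F}_{n})$ is a non-negative cadlag martingale, so that $\mathcal{L}_{q}$-boundedness will yield (\ref{eq:lq.finite}) and $\mathcal{L}_{q}$-convergence via Doob's convergence theorem. I take $t=1$ and perform a scale-by-scale analysis modelled on Lemma~\ref{lem1}.

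For $u=(u_{1},\ldots ,u_{q-1})\in (0,1)^{q-1}$ set $n_{j}(u)=\lfloor -\log _{b}u_{j}\rfloor $ and let $n_{(1)}(u)\leq \cdots \leq n_{(q-1)}(u)$ be the nondecreasing rearrangement. The range $\{0,1,\ldots ,n\}$ then splits into at most $q$ consecutive \emph{regimes} delimited by the thresholds $n_{(1)}\leq \cdots \leq n_{(q-1)}$. On the first regime $l<n_{(1)}$ every argument $b^{l}u_{j}$ lies in $[0,1)$, so monotonicity gives $\rho _{l}\leq \rho (0,\ldots ,0)=\mathrm{E}\Lambda (0)^{q}$. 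On the last regime $l\geq n_{(q-1)}$ every argument exceeds $1$; replacing each $b^{l}u_{j}$ by the common lower bound $b^{l-n_{(q-1)}-1}$ by monotonicity and then applying (\ref{eq:sum_rho_finite}) (exactly as the shifting trick in the proof of Lemma~\ref{lem1}) bounds $\prod _{l\geq n_{(q-1)}}\rho _{l}$ by a universal constant $\Pi $. On each intermediate regime $n_{(k)}\leq l<n_{(k+1)}$ some coordinates are "far" (in $[1,\infty )$) and the rest are "close"; the mixing hypothesis (\ref{eq:mixing}) pins down the pointwise limit of $\rho _{l}$ as a product of cluster moments $\mathrm{E}\Lambda ^{c_{1}}\cdots \mathrm{E}\Lambda ^{c_{r}}$ determined by the pattern of far/close indices. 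The crux of the argument, and the main obstacle, is to upgrade this pointwise limit to a genuine upper bound with a summable perturbation: I intend to use monotonicity in the close arguments (pushing them down to $0$) to collapse $\rho _{l}$ to a correlation of $\Lambda $ at the far-coordinate positions alone, and then re-apply the Lemma~\ref{lem1} shifting trick---justified by (\ref{eq:sum_rho_finite})---on that lower-dimensional correlation.

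Granting the regime bounds, on the $b$-adic cell $\prod _{j}(b^{-n_{j}-1},b^{-n_{j}}]$ one obtains an estimate of the shape
\begin{equation*}
\prod _{l=0}^{n}\rho _{l}\leq \Pi \cdot \prod _{k=0}^{q-2}P_{k}^{\,n_{(k+1)}-n_{(k)}},
\end{equation*}
where each factor $P_{k}$ is a product of cluster moments dominated by $\mathrm{E}\Lambda ^{j_{k}}$ for some $j_{k}\in \{2,\ldots ,q\}$ (with $P_{0}=\mathrm{E}\Lambda ^{q}$). Substituting $u_{j}=b^{-x_{j}}$, ordering the $x_{j}$'s, and passing to the gap variables $y_{k}=x_{(k)}-x_{(k-1)}\geq 0$ factorises the integral into one-dimensional Laplace-type integrals
\begin{equation*}
\int _{0}^{\infty }M^{y}\,b^{-ry}\,dy ,
\end{equation*}
each finite if and only if $M<b^{r}$. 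Tracking the Jacobian exponents, the resulting conditions reduce to $\mathrm{E}\Lambda ^{j}<b^{j-1}$ for $j=2,\ldots ,q$. All of these follow from the single hypothesis (\ref{eq:b.greater}) because $\varphi (j)=\log \mathrm{E}\Lambda ^{j}$ is convex on $[1,q]$ with $\varphi (1)=0$, so $\varphi (j)/(j-1)$ is nondecreasing in $j$. This gives $\sup _{n}\mathrm{E}A_{n}^{q}(t)<\infty $ and hence the theorem.
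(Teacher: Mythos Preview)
Your overall architecture matches the paper's proof closely: split the product $\prod_{l}\rho_{l}$ into regimes according to how many arguments $b^{l}u_{j}$ have crossed a threshold, bound regime $0$ by $(\mathrm{E}\Lambda^{q})^{n_{(1)}}$, bound the last regime by a universal constant via (\ref{eq:sum_rho_finite}), reduce the cluster products $\prod_{r}\mathrm{E}\Lambda^{c_{r}}$ to $\mathrm{E}\Lambda^{q-k}$ by convexity/Karamata, and then integrate in gap variables. Your final integrability check $\mathrm{E}\Lambda^{j}<b^{j-1}$ for $j=2,\ldots,q$ and its deduction from (\ref{eq:b.greater}) via convexity of $\varphi$ is correct and is exactly what the paper obtains after its rearrangement (\ref{eq:prod.diff.form}).

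The gap is precisely at the step you flag as ``the main obstacle''. Your proposed fix---push the close arguments to $0$ and then apply the shifting trick ``justified by (\ref{eq:sum_rho_finite})''---does not work with the stated hypotheses. After collapsing the close coordinates you obtain a function $\tilde{\rho}$ of the far coordinates whose limit (as those coordinates $\to\infty$) is the cluster product $P_{k}$; to get a summable perturbation you would need $\sum_{m}\bigl(\tilde{\rho}(b^{m},\ldots,b^{m})/P_{k}-1\bigr)<\infty$, i.e.\ a \emph{rate} in the mixing condition. But (\ref{eq:mixing}) is only a pointwise limit, and (\ref{eq:sum_rho_finite}) controls $\rho(b^{n},\ldots,b^{n})-1$ (convergence to $1$ with \emph{all} coordinates large), not $\tilde{\rho}/P_{k}-1$. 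Monotonicity does not bridge the two: lowering the zero coordinates to $b^{m}$ would decrease $\tilde{\rho}$, giving the wrong inequality, while the collapsed $\tilde{\rho}$ involves powers $\Lambda^{c_{r}}$ with $c_{r}>1$, which are not covered by the hypothesis on $\rho$ itself.

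The paper resolves this differently. It introduces an auxiliary large parameter $A$ and sets $n(u)=-[\log_{b}(u/A)]$, so that in every intermediate regime the far arguments are already $\geq A$. Then the pointwise mixing limit (\ref{eq:mixing}) yields the uniform bound $\rho_{l}\leq(1+\delta)P_{k}$ with $\delta$ as small as one likes (by choosing $A$ large). The accumulated factor $(1+\delta)^{l_{1}}$ is then absorbed using the \emph{strict} inequality (\ref{eq:b.greater}): one takes $\delta$ small enough that $(1+\delta)\,(\mathrm{E}\Lambda^{q})^{1/(q-1)}<b^{1-\varepsilon}$ for some $\varepsilon>0$, which produces the integrable majorant $(u_{1}\cdots u_{q-1})^{-1+\varepsilon}$. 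This is the missing ingredient in your plan; once you insert the $A$-shift and the $(1+\delta)$ slack, the rest of your outline goes through as written.
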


\textit{Proof of Theorem~\ref{Th3.1}}. 
As above  $\mathcal{L}_{q}$ convergence implies $\mathcal{L}_{\widetilde q}$ convergence for all 
$\widetilde q\in [0,q]$. 
This follows from the inequality $\mathrm{E}|A_n(t)-A(t)|^{\widetilde q}\le (\mathrm{E}|A_n(t)-A(t)|^q)^{\widetilde q/q}$ valid for any $\widetilde q\le q$. 

It is sufficient to prove that
equation (\ref{eq:eq_N2}) holds. To simplify notation we put $t=1$. First
represent the integral in (\ref{eq:eq_N2}) as the sum of the integrals over
different regions 
\begin{multline}
\int_{0\leq u_{1},\ldots ,u_{q-1}\leq 1}\prod_{l=0}^{n}\rho
(b^{l}u_{1},\ldots ,b^{l}u_{q-1})du_{1}\ldots du_{q-1}
\label{eq:int_sum_regions} \\
=\sum_{i_{1},\ldots i_{q-1}}\int_{0\le u_{i_{1}}\leq u_{i_{2}}\leq
\ldots\leq u_{i_{q-1}}\leq 1}\prod_{l=0}^{n}\rho (b^{l}u_{1},\ldots
,b^{l}u_{q-1})du_{1}\ldots du_{q-1},
\end{multline}%
where the sum is taken over all possible permutations of numbers $%
(1,2,\ldots ,q-1).$ Next we are going to bound the integrals on these
separate regions. Put 
\begin{equation*}
u_{(1)}=u_{i_{1}},u_{(2)}=u_{i_{2}},\ldots ,u_{(q-1)}=u_{i_{q-1}}.
\end{equation*}%
Fix a large number $A\geq 1$ which we define later and define an auxiliary
function $n(u)=-[\log _{b}u/A]$. Note that this function is non-negative for 
$u\leq 1$. Now let 
\begin{equation*}
l_{1}=n(u_{(1)}),l_{2}=n(u_{(2)}),\ldots ,l_{q-1}=n(u_{(q-1)}).
\end{equation*}%
These numbers are decreasing 
\begin{equation}
l_{1}\geq l_{2}\geq \ldots \geq l_{q-1}.  \label{eq:l.decreasing}
\end{equation}%
Then we can split the product as 
\begin{equation}
\prod_{l=0}^{n}\rho (b^{l}u_{1},\ldots
,b^{l}u_{q-1})=\prod_{l=0}^{l_{q-1}-1}\prod_{l=l_{q-1}}^{l_{q-2}-1}\ldots
\prod_{l=l_{2}}^{l_{1}-1}\prod_{l=l_{1}}^{n}\rho (b^{l}u_{1},\ldots
,b^{l}u_{q-1}).  \label{eq:split.product}
\end{equation}%
Further, using monotonicity of the function $\rho $ we can estimate for $%
l<l_{q-1}$, 
\begin{equation*}
\rho (b^{l}u_{1},\ldots ,b^{l}u_{q-1})\leq \rho (0,\ldots ,0)=\mathrm{E}%
\Lambda (0)^{q}.
\end{equation*}%
For $l\in \lbrack l_{q-1},l_{q-2})$, we have 
\begin{equation*}
\rho (b^{l}u_{1},\ldots ,b^{l}u_{q-1})\leq \rho (0,\ldots ,0,A,0\ldots ,0),
\end{equation*}%
where $i_{q-1}$th argument of the function $\rho $ is equal to $A$ and all
other arguments are equal to $0$. Indeed this holds due to the fact that for 
$l>l_{q-1}$ 
\begin{equation*}
b^{l}u_{(q-1)}\geq b^{l_{q-1}}u_{(q-1)}\geq \frac{A}{u_{(q-1)}}u_{(q-1)}=A
\end{equation*}%
and the monotonicity of the function $\rho $. Here recall that $u_{(q-1)}$
corresponds to $u_{i_{q-1}}$. Fix a small number $\delta $ which we define
later. Now we can note that mixing condition (\ref{eq:mixing}) implies that 
\begin{equation*}
\lim_{A\rightarrow \infty }\rho (0,\ldots ,0,A,0,\ldots ,0)=\mathrm{E}%
\Lambda (0)^{i_{q-1}}\mathrm{E}\Lambda (0)^{q-i_{q-1}}
\end{equation*}%
Hence we can pick $A=A(\delta )$ sufficiently large to ensure that 
\begin{equation*}
\rho (0,\ldots ,0,A,0,\ldots ,0)\leq (1+\delta )\mathrm{E}\Lambda
(0)^{i_{q-1}}\mathrm{E}\Lambda (0)^{q-i_{q-1}}.
\end{equation*}%
Function $g(x)=\ln \mathrm{E}\Lambda (0)^{x}$ is convex. Hence we can apply
Karamata majorization inequality \cite{Kar32} to obtain that 
\begin{equation*}
g(i_{q-1})+g(q-i_{q-1})\leq g(q-1)+g(1).
\end{equation*}%
Therefore, 
\begin{equation*}
\mathrm{E}\Lambda (0)^{i_{q-1}}\mathrm{E}\Lambda (0)^{q-i_{q-1}}\leq \mathrm{%
E}\Lambda (0)^{q-1}\mathrm{E}\Lambda (0)=\mathrm{E}\Lambda (0)^{q-1}
\end{equation*}%
and 
\begin{equation*}
\rho (0,\ldots ,0,A,0,\ldots ,0)\leq (1+\delta )\mathrm{E}\Lambda (0)^{q-1}.
\end{equation*}%
Similarly, for $l\in \lbrack l_{q-2},l_{q-3}]$, we have 
\begin{equation*}
\rho (b^{l}u_{1},\ldots ,b^{l}u_{q-1})\leq \rho (0,\ldots ,0,A,\ldots
,0,A,0\ldots ,0),
\end{equation*}%
where the arguments of the function $\rho $ are equal to $0$ except
arguments $i_{q-1}$ and $i_{q-2}$ which are equal to $A$. Applying the
mixing condition and increasing $A$ if necessary we can ensure that for $%
l\in \lbrack l_{q-2},l_{q-3}]$, 
\begin{equation*}
\rho (b^{l}u_{1},\ldots ,b^{l}u_{q-1})\leq (1+\delta )\mathrm{E}\Lambda
(0)^{a}\mathrm{E}\Lambda (0)^{b-a}\mathrm{E}\Lambda (0)^{q-b},
\end{equation*}%
where $a=\min (i_{q-2},i_{q-1})$, $b=\max (i_{q-2},i_{q-1})$. We apply now
Karamata's majorisation inequality twice. First application of the
inequality gives 
\begin{equation*}
\mathrm{E}\Lambda (0)^{a}\mathrm{E}\Lambda(0)^{b-a}\le\mathrm{E}\Lambda
(0)^{b-1}.
\end{equation*}%
Second application of Karamata's inequality gives 
\begin{equation*}
\mathrm{E}\Lambda (0)^{b-1}\mathrm{E}\Lambda (0)^{q-b}\leq \mathrm{E}\Lambda
(0)^{q-2}.
\end{equation*}%
Hence, for $l\in \lbrack l_{q-2},l_{q-3})$ and sufficiently large $A$, 
\begin{equation*}
\rho (b^{l}u_{1},\ldots ,b^{l}u^{q-1})\leq (1+\delta )\mathrm{E}\Lambda
(0)^{a}\mathrm{E}\Lambda (0)^{b-a}\mathrm{E}\Lambda (0)^{q-b}\leq (1+\delta)%
\mathrm{E}\Lambda (0)^{q-2}.
\end{equation*}%
In exactly the same manner, using the mixing conditions and Karamata's
majorisation inequality one can obtain for $l\in \lbrack l_{j},l_{j-1})$ and 
$j=q-1,q-2,\ldots ,2$ 
\begin{equation*}
\rho (b^{l}u_{1},\ldots ,b^{l}u^{q-1})\leq (1+\delta)\mathrm{E}\Lambda
(0)^{j}.
\end{equation*}%
Hence, 
\begin{align}
\prod_{l=0}^{l_{1}-1}\rho (b^{l}u_{1},\ldots
,b^{l}u_{q-1})&=\prod_{l=0}^{l_{q-1}-1}\prod_{l=l_{q-1}}^{l_{q-2}-1}\ldots
\prod_{l=l_{2}}^{l_{1}-1}\rho (b^{l}u_{1},\ldots ,b^{l}u_{q-1})
\label{eq:est.prod} \\
&\leq (1+\delta )^{l_{1}}\prod_{i=2}^{q}\prod_{l=l_{i}}^{l_{i-1}-1}\mathrm{E}%
\Lambda (0)^{i}=(1+\delta )^{l_{1}}\prod_{i=2}^{q}\left( \mathrm{E}\Lambda
(0)^{i}\right) ^{l_{i-1}-l_{i}},  \notag
\end{align}%
where $l_{q}=0$. Rearranging the terms we can represent this product in a
slightly different form 
\begin{equation}
\prod_{i=2}^{q}\left( \mathrm{E}\Lambda (0)^{i}\right)
^{l_{i-1}-l_{i}}=\prod_{i=1}^{q-1}\left( \frac{\mathrm{E}\Lambda (0)^{i+1}%
\mathrm{E}\Lambda (0)^{i-1}}{(\mathrm{E}\Lambda (0)^{i})^{2}}\right)
^{l_{q-1}+\cdots +l_{i}}  \label{eq:prod.diff.form}
\end{equation}%
Now one can note that since $l_{i}$ are decreasing, see (\ref%
{eq:l.decreasing}), 
\begin{equation*}
l_{q-1}+\cdots +l_{i}\leq \frac{q-i}{q-1}(l_{1}+\cdots +l_{q-1}),
\end{equation*}%
for any $i=1,\ldots ,q-1$. Indeed, the latter inequality is equivalent to 
\begin{equation*}
(i-1)(l_{q-1}+\cdots +l_{i})\leq (q-i)(l_{i-1}+\cdots +l_{1}),
\end{equation*}%
which follows from 
\begin{equation*}
\frac{l_{q-1}+\cdots +l_{i}}{q-i}\leq l_{i}\leq l_{i-1}\leq \frac{%
l_{i-1}+\cdots +l_{1}}{i-1}.
\end{equation*}%
In addition, by the Karamata's majorization inequality, 
\begin{equation*}
\frac{\mathrm{E}\Lambda (0)^{i+1}\mathrm{E}\Lambda (0)^{i-1}}{(\mathrm{E}%
\Lambda (0)^{i})^{2}}>1.
\end{equation*}%
Therefore, 
\begin{equation*}
\left( \frac{\mathrm{E}\Lambda (0)^{i+1}\mathrm{E}\Lambda (0)^{i-1}}{(%
\mathrm{E}\Lambda (0)^{i})^{2}}\right) ^{l_{q-1}+\cdots +l_{i}}\leq \left( 
\frac{\mathrm{E}\Lambda (0)^{i+1}\mathrm{E}\Lambda (0)^{i-1}}{(\mathrm{E}%
\Lambda (0)^{i})^{2}}\right) ^{\frac{q-i}{q-1}(l_{1}+\ldots +l_{q-1})}
\end{equation*}

Hence we can continue (\ref{eq:prod.diff.form}) as follows 
\begin{multline}
\prod_{i=2}^{q}\left( \mathrm{E}\Lambda (0)^{i}\right) ^{l_{i-1}-l_{i}}\leq
\prod_{i=2}^{q-1}\left( \frac{\mathrm{E}\Lambda (0)^{i+1}\mathrm{E}\Lambda
(0)^{i-1}}{(\mathrm{E}\Lambda (0)^{i})^{2}}\right) ^{\frac{q-i}{q-1}%
(l_{1}+\ldots +l_{q-1})}  \label{eq:product.est.final} \\
=\left( \mathrm{E}\Lambda (0)^{q}\right) ^{\frac{l_{1}+\cdots +l_{q-1}}{q-1}%
}\prod_{i=2}^{q-3}\left( \mathrm{E}\Lambda (0)^{i}\right) ^{\frac{%
q-i+1-2(q-i)+q-i-1}{q-1}}=\left( \mathrm{E}\Lambda (0)^{q}\right) ^{\frac{%
l_{1}+\cdots +l_{q-1}}{q-1}}.
\end{multline}

Plugging the latter estimate in (\ref{eq:est.prod}) we arrive at 
\begin{equation*}
\prod_{l=0}^{l_{1}}\rho (b^{l}u_{1},\ldots ,b^{l}u_{q-1})\leq (1+\delta
)^{l_{1}}\left( \mathrm{E}\Lambda (0)^{q}\right) ^{\frac{l_{1}+\cdots
+l_{q-1}}{q-1}}.
\end{equation*}%
We can now make use of the condition \eqref{eq:b.greater} and by taking $%
\delta $ sufficiently small we can ensure that 
\begin{equation}
\prod_{l=0}^{l_{1}}\rho (b^{l}u_{1},\ldots ,b^{l}u_{q-1})\leq
b^{(1-\varepsilon )(l_{1}+\cdots +l_{q-1})}=(u_{1}u_{2}\ldots
u_{q-1})^{-1+\varepsilon }A^{q(1-\varepsilon )}  \label{eq:est.prod.l2}
\end{equation}%
for some small $\varepsilon >0.$ We are left to estimate the product $%
\prod_{l=l_{1}}^{n}$ uniformly in $n$. For that we are going to use
finiteness of the series in (\ref{eq:sum_rho_finite}). First note that for $%
l\geq l_{1}$, $b^{l}u_{j}\geq Ab^{l-l_{1}}.$ Then, by monotonicity of the
function $\rho $, uniformly in $n$, for some $C>0$ 
\begin{align}
\prod_{l=l_{1}}^{n}\rho (b^{l}u_{1},\ldots ,b^{l}u_{q-1})& \leq
\prod_{l=l_{1}}^{n}\rho (b^{l-l_{1}}A,\ldots ,b^{l-l_{1}}A)  \notag \\
& \leq \prod_{l=0}^{\infty }\rho (b^{l},\ldots ,\ldots ,b^{l})<C,
\label{eq:est.prod.l3}
\end{align}%
according to the finiteness of the series. Together (\ref{eq:est.prod.l2})
and (\ref{eq:est.prod.l3}) give us

\begin{multline}
\int_{0\leq u_{1},\ldots ,u_{q-1}\leq 1}\prod_{l=0}^{n}\rho
(b^{l}u_{1},\ldots ,b^{l}u_{q-1})du_{1}\ldots du_{q-1}
\label{eq:int_sum_regions.final} \\
=\sum_{i_{1},\ldots i_{q-1}}\int_{0<u_{i_{1}}\leq u_{i_{2}}\leq
u_{i_{q-1}}\leq 1}\prod_{l=0}^{n}\rho (b^{l}u_{1},\ldots
,b^{l}u_{q-1})du_{1}\ldots du_{q-1} \\
\leq C\sum_{i_{1},\ldots i_{q-1}}\int_{0<u_{i_{1}}\leq u_{i_{2}}\leq
u_{i_{q-1}}\leq 1}(u_{1}u_{2}\ldots u_{q-1})^{-1+\varepsilon }du_{1}\ldots
du_{q-1} \\
=C\int_{0\leq u_{1},\ldots ,u_{q-1}\leq 1}(u_{1}u_{2}\ldots
u_{q-1})^{-1+\varepsilon }du_{1}\ldots du_{q-1}
\end{multline}%
which immediately gives a finite bound for $\mathrm{E}A_{n}(1)^{q}$ uniform
in $n$.

\qed

\begin{remark}
It is not difficult to show that \eqref{eq:b.greater} is sharp. Indeed
suppose that 
\begin{equation*}
b^{q-1}<\mathrm{E}\Lambda (0)^{q}
\end{equation*}%
and that $\rho (u_{1},\ldots ,u_{q-1})$ is continuous at $(0,\ldots ,0$).
Then, for $\varepsilon >0$, 
\begin{align*}
\mathrm{E}A_{n}^{q}(t)& =q!\int_{0<u_{0},\ldots ,u_{q-1}}^{u_{0}+\cdots
+u_{q-1}\leq t}\mathrm{E}\Lambda _{n}(0)\Lambda _{n}(u_{1})\ldots \Lambda
_{n}(u_{1}+\cdots +u_{q-1})du_{0}\ldots du_{q-1} \\
& =q!\int_{0<u_{0},\ldots ,u_{q-1}}^{u_{0}+\cdots +u_{q-1}\leq
t}\prod_{l=0}^{n}\rho (b^{l}u_{1},\ldots ,b^{l}u_{q-1})du_{0}\ldots du_{q-1}
\\
& \geq q!\int_{0<u_{0}<1/2,0<u_{1}\ldots ,u_{q-1}\leq \varepsilon
/b^{n}}\prod_{l=0}^{n}\rho (b^{l}u_{1},\ldots ,b^{l}u_{q-1})du_{0}\ldots
du_{q-1} \\
& \geq \frac{q!}{2}\int_{0<u_{1}\ldots ,u_{q-1}\leq \varepsilon
/b^{n}}\prod_{l=0}^{n}\rho (\varepsilon ,\ldots ,\varepsilon )du_{1}\ldots
du_{q-1} =\frac{q!}{2}\varepsilon ^{q-1}\left( \frac{\rho (\varepsilon
,\ldots ,\varepsilon )}{b^{q-1}}\right) ^{n}
\end{align*}

Since $\rho (\varepsilon ,\ldots ,\varepsilon )$ can be made arbitrarily
close to $\rho (0,\ldots ,0)=\mathrm{E}\Lambda (0)^{q}$, then, for
sufficiently small $\varepsilon >0$, $\rho (\varepsilon ,\ldots ,\varepsilon
)>b^{q-1},$ and 
\begin{equation*}
\mathrm{E}A_{n}(t)\geq \frac{q!}{2}\varepsilon ^{q-1}\left( \frac{\rho
(\varepsilon ,\ldots ,\varepsilon )}{b^{q-1}}\right) ^{n}\rightarrow \infty,
\quad n\rightarrow \infty .
\end{equation*}
\end{remark}

\section{ Scaling of moments}

\label{section_scaling} The aim of this Section is to establish the scaling
property (\ref{2.6}). For $q>1$ let 
\begin{equation}
\rho _{q}(s)=\inf_{u\in \lbrack 0,1]}\left( \frac{\mathrm{E}\Lambda
(0)^{q-1}\Lambda (su)}{\mathrm{E}\Lambda (0)^{q}}-1\right) .  \label{eq_q1}
\end{equation}%
Note that $\rho _{q}(s)\leq 0$. For $q\in (0,1)$ let 
\begin{equation}
\rho _{q}(s)=\sup_{u\in \lbrack 0,1]}\left( \frac{\mathrm{E}\Lambda
(0)^{q-1}\Lambda (su)}{\mathrm{E}\Lambda (0)^{q}}-1\right) .  \label{eq_q2}
\end{equation}%
For $q\leq 1$ it is easy to see that $\rho _{q}(s)\geq 0.$

\begin{theorem}
\label{thm:scaling} Assume that $A(t)\in \mathcal{L}_{q}$ and $\rho _{q}(s)$
defined in (\ref{eq_q1}) and (\ref{eq_q2}) is such that 
\begin{equation}  \label{eq:ass_scaling}
\sum_{n=1}^{\infty }|\rho _{q}(b^{-n})|<\infty .
\end{equation}%
Then, 
\begin{equation}  \label{eq:mf.bounds}
\mathrm{E}A^{q}(t)\sim t^{q-\log _{b}\mathrm{E}\Lambda ^{q}\left( t\right)
},\quad t\in [0,1].
\end{equation}
and process $A(t)$ is non-degenerate, that is $\mathbb{P}(A(t)>0)>0.$
\end{theorem}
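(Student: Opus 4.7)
The plan is to establish a one-step scaling relation between $F(t):=\mathrm{E}A(t)^q$ and $F(bt)$, and then iterate it at the geometric scales $t=b^{-n}$. Working with the finite approximations $A_n(t)$ (for which the differential identity
\begin{equation*}
A_n(t)^q=q\int_0^t A_n(s)^{q-1}\Lambda_n(s)\,ds
\end{equation*}
is legitimate since $\Lambda_n$ is a bona fide density), the factorization $\Lambda_n(s)\overset{d}{=}\Lambda(s)\tilde{\Lambda}_{n-1}(bs)$ with $\Lambda$ and $\tilde{\Lambda}_{n-1}$ independent, together with $A_n(s)\overset{d}{=}\frac{1}{b}\int_0^{bs}\Lambda(u/b)\,d\tilde{A}_{n-1}(u)$, allows me to approximate $\Lambda(u/b)\approx\Lambda(0)$ uniformly on $[0,bs]\subset[0,bt]$ and obtain
\begin{equation*}
\mathrm{E}[A_n(s)^{q-1}\Lambda_n(s)]\approx\frac{1}{b^{q-1}}\,\mathrm{E}[\Lambda(0)^{q-1}\Lambda(s)]\cdot \mathrm{E}[\tilde{A}_{n-1}(bs)^{q-1}\tilde{\Lambda}_{n-1}(bs)],
\end{equation*}
where the second factor equals $F_{n-1}'(bs)/q$ by the derivative identity applied to $\tilde{A}_{n-1}$.

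Inserting this approximation back into the integral for $F_n(t)$ and substituting $u=bs$ yields
\begin{equation*}
F_n(t)\approx\frac{1}{b^q}\int_0^{bt}\mathrm{E}[\Lambda(0)^{q-1}\Lambda(u/b)]\,F_{n-1}'(u)\,du.
\end{equation*}
By H\"{o}lder's inequality $\mathrm{E}[\Lambda(0)^{q-1}\Lambda(s)]\leq M(q)$ for $q>1$, and by the very definition of $\rho_q$ one has $\mathrm{E}[\Lambda(0)^{q-1}\Lambda(u/b)]\geq(1+\rho_q(t))M(q)$ whenever $u/b\in[0,t]$ (the inequalities reverse when $q<1$, consistent with $\rho_q\ge 0$ there). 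Integrating $F_{n-1}'$ and letting $n\to\infty$ gives, for all $t\leq 1/b$,
\begin{equation*}
(1+\rho_q(t))\frac{M(q)}{b^q}F(bt)\leq F(t)\leq\frac{M(q)}{b^q}F(bt).
\end{equation*}

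Iterating this at the scales $t,bt,\ldots,b^{n-1}t$ with $t=b^{-n}$ produces
\begin{equation*}
\prod_{k=1}^{n}(1+\rho_q(b^{-k}))\,\frac{M(q)^n}{b^{qn}}\,F(1)\leq F(b^{-n})\leq \frac{M(q)^n}{b^{qn}}\,F(1),
\end{equation*}
and the summability assumption \eqref{eq:ass_scaling}, together with the fact that $\rho_q(s)\in(-1,0]$ for $q>1$, forces $\prod_{k\ge 1}(1+\rho_q(b^{-k}))$ to converge to a strictly positive finite constant. Since $M(q)^n/b^{qn}=(b^{-n})^{q-\log_b M(q)}$, one obtains $F(b^{-n})\asymp (b^{-n})^{q-\log_b M(q)}$, and monotonicity of $F$ (the process $A$ is increasing, so $A^q$ is too) extends the two-sided bound to arbitrary $t\in(0,1]$ by sandwiching $F(t)$ between $F(b^{-n-1})$ and $F(b^{-n})$.

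The main obstacle is quantifying the approximation in the very first step: replacing $\Lambda(u/b)^{q-1}$ by $\Lambda(0)^{q-1}$ inside $\mathrm{E}[A_n(s)^{q-1}\Lambda_n(s)]$ in such a way that the error is controlled \emph{solely} by the two-point quantity $\mathrm{E}\Lambda(0)^{q-1}\Lambda(s)$ that figures in the definition of $\rho_q$, rather than by higher-order joint moments of the mother process. The reason this is possible is precisely the split $\Lambda_n=\Lambda\cdot\tilde{\Lambda}_{n-1}$ with independent factors: after conditioning on $\Lambda$ and using the identity for $\tilde{A}_{n-1}$, only the two-point correlation of $\Lambda$ between times $0$ and $s$ survives, which is exactly what $\rho_q(t)$ controls.
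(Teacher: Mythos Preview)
Your approach has a genuine gap at exactly the step you flag as the main obstacle. After the split $A_n(s)=\frac{1}{b}\int_0^{bs}\Lambda(u/b)\,d\tilde A_{n-1}(u)$, the quantity $A_n(s)^{q-1}$ depends on the \emph{entire path} $\{\Lambda(u/b):u\in[0,bs]\}$ through a nonlinear functional. Conditioning on $\Lambda^{(0)}$ does separate $\Lambda$ from $\tilde\Lambda_{n-1}$, but the conditional expectation
\[
\mathrm{E}\Bigl[\Bigl(\tfrac{1}{b}\int_0^{bs}\Lambda(u/b)\,d\tilde A_{n-1}(u)\Bigr)^{q-1}\tilde\Lambda_{n-1}(bs)\Bigm|\Lambda^{(0)}\Bigr]
\]
remains a nonlinear functional of the whole path $\Lambda^{(0)}(\cdot)$ on $[0,s]$, not a function of $\Lambda(0)$ and $\Lambda(s)$ alone. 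Hence your claim that ``only the two-point correlation of $\Lambda$ between times $0$ and $s$ survives'' is false: the error in replacing $\Lambda(u/b)$ by $\Lambda(0)$ inside the $(q-1)$-th power is \emph{not} controlled by $\rho_q(t)$, which bounds only $\mathrm{E}[\Lambda(0)^{q-1}\Lambda(tu)]/M(q)-1$. Controlling that error would require either higher-order joint moments $\mathrm{E}[\Lambda(s)\prod_j\Lambda(u_j/b)]$ or a pathwise bound on $|\Lambda(u/b)-\Lambda(0)|$, neither of which is assumed.

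The paper avoids this by peeling off the \emph{last} factor $\Lambda^{(n+1)}$ rather than the first. Writing $\widetilde A_{n+1}(t)=\int_0^1\Lambda_n(ut)\Lambda^{(n+1)}(b^{n+1}ut)\,du$ and decomposing $\Lambda^{(n+1)}(b^{n+1}ut)=\Lambda^{(n+1)}(0)+\bigl(\Lambda^{(n+1)}(b^{n+1}ut)-\Lambda^{(n+1)}(0)\bigr)$, one applies $(a+b)^q\ge b^q+qab^{q-1}$ with $b=\widetilde A_n(t)\Lambda^{(n+1)}(0)$. The crucial point is that $\widetilde A_n(t)$ is $\sigma(\Lambda^{(0)},\dots,\Lambda^{(n)})$-measurable and therefore \emph{independent} of $\Lambda^{(n+1)}$; the cross term then factors exactly as
\[
q\int_0^1\mathrm{E}\bigl[\widetilde A_n(t)^{q-1}\Lambda_n(ut)\bigr]\,\mathrm{E}\bigl[\Lambda^{(n+1)}(0)^{q-1}\bigl(\Lambda^{(n+1)}(b^{n+1}ut)-\Lambda^{(n+1)}(0)\bigr)\bigr]\,du,
\]
where the second expectation is precisely the two-point quantity bounded by $M(q)\rho_q(b^{n+1}t)$, and the first factor integrates over $u$ to give $\mathrm{E}\widetilde A_n(t)^q$ exactly. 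This yields the clean recursion $\mathrm{E}\widetilde A_{n+1}(t)^q\ge M(q)\bigl(1+q\rho_q(b^{n+1}t)\bigr)\mathrm{E}\widetilde A_n(t)^q$, which iterates as you intended. Your version fails because although $\tilde A_{n-1}$ is independent of $\Lambda^{(0)}$, the object $A_n(s)^{q-1}$ that actually appears is not.
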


\textit{Proof of Theorem~\ref{thm:scaling}}

Our strategy in proving of (\ref{eq:mf.bounds}) is to use martingale
properties of the sequence $A_{n}(t)$. We concentrate mainly on $q>1$, as
the case $q<1$ is symmetric. For the upper bound we obtain uniform in $n$
bounds from above for $\mathrm{E}A_{n}(t)^{q}$. Then, since $A_{n}(t)$
converges to $A(t) $ in $\mathcal{L}_{q}$, the same estimates hold for $%
\mathrm{E}A(t)^{q}$. For the lower bound, we use the fact that as $%
A_{n}(t)\in \mathcal{L}_{q}$ for $q>1$ the martingale $A_{n}(t)$ is
closable. Hence it can be represented as $A_{n}(t)=\mathrm{E}%
(A(t)|A_{1}(t),\ldots ,A_{n}(t))$. Therefore, for $q>1 $, by the conditional
Jensen inequality, 
\begin{align*}
\mathrm{E}A_{n}(t)^{q}& =\mathrm{E}(\mathrm{E}(A(t)|A_{1}(t),\ldots
,A_{n}(t)))^{q} \\
& \leq \mathrm{E}(\mathrm{E}(A(t)^{q}|A_{1}(t),\ldots ,A_{n}(t)))=\mathrm{E}%
A(t)^{q}.
\end{align*}%
Thus, we are going to obtain an estimate from below for $\mathrm{E}%
A_{n}(t)^{q}$ for a suitable choice of $n$. Clearly, by the latter
inequality, this estimate will hold for $\mathrm{E}A(t)^{q}$ as well.

We start with a change of variable 
\begin{equation*}
A_{n}(t)=\int_{0}^{t}\Lambda _{n}(s)ds=t\int_{0}^{1}\Lambda _{n}(ut)du\equiv
t\widetilde{A}_{n}(t).
\end{equation*}%
Clearly $\widetilde{A}_{n}(t)$ is a martingale for any fixed $t$.

We are going to treat the cases $q\ge 1$ and $q\le 1$ separately. This is
due to the fact that for $q\ge 1$, the sequences $\widetilde A_n(t)^q$ and $%
A_n(t)^q$ are submartingales while for $q\in(0,1)$ the sequences are
supermartingales with respect to the filtration $\mathcal{F}%
_n=\sigma(\Lambda^{(1)},\ldots,\Lambda^{(n)})$.

We start with an upper bound for $q\geq 1$. Let $n_{t}=-[\log _{b}t]$ be the
biggest integer such that $n_{t}\leq -\log _{b}t$. We use the H\"{o}lder
inequality in the form, 
\begin{equation*}
\left( \int_{0}^{1}|fg|\right) ^{q}=\left(
\int_{0}^{1}|f||g|^{1/q}|g|^{1/p}\right) ^{q}\leq \left(
\int_{0}^{1}|f|^{q}|g|\right) \left( \int_{0}^{1}|g|\right) ^{q/p},
\end{equation*}%
where $1/q+1/p=1$. It follows from the latter inequality, 
\begin{equation*}
\left( \int_{0}^{1}\prod_{k=0}^{n}\Lambda ^{(k)}(ut)du\right) ^{q}\leq
\left( \int_{0}^{1}\left( \prod_{k=0}^{n_{t}-1}\Lambda ^{(k)}(ut)\right)
^{q}\prod_{k=n_{t}}^{n}\Lambda ^{(k)}(ut)du\right) \left(
\int_{0}^{1}\prod_{k=n_{t}}^{n}\Lambda ^{(k)}(ut)du\right) ^{q/p}.
\end{equation*}%
Applying expectation to both sides we obtain, using independence of $\Lambda
^{(k)}$ of each other, 
\begin{equation*}
\mathrm{E}\widetilde{A}_{n}(t)^{q}\leq \left(
\int_{0}^{1}\prod_{k=0}^{n_{t}-1}\mathrm{E}(\Lambda
^{(k)})^{q}(ut)\prod_{k=n_{t}}^{n}\mathrm{E}\left( \Lambda ^{(k)}(ut)\left(
\int_{0}^{1}\prod_{k=n_{t}}^{n}\Lambda ^{(k)}(vt)dv\right) ^{q/p}\right)
du\right) .
\end{equation*}%
By the stationarity of the process $\Lambda (t)$ we have 
\begin{equation*}
\prod_{k=0}^{n_{t}-1}\mathrm{E}(\Lambda ^{(k)})^{q}(ut)=\left( \mathrm{E}%
\Lambda (0)^{q}\right) ^{n_{t}}\leq \left( \mathrm{E}\Lambda (0)^{q}\right)
^{-\log _{b}t}=t^{-\log _{b}\mathrm{E}\Lambda (0)^{q}}.
\end{equation*}%
Therefore, 
\begin{align*}
\mathrm{E}\widetilde{A}_{n}(t)^{q}& \leq t^{-\log _{b}\mathrm{E}\Lambda
(0)^{q}}\mathrm{E}\int_{0}^{1}\prod_{k=n_{t}}^{n}\left( \Lambda
^{(k)}(ut)\left( \int_{0}^{1}\prod_{k=n_{t}}^{n}\Lambda ^{(k)}(vt)dv\right)
^{q/p}\right) du \\
& =t^{-\log _{b}\mathrm{E}\Lambda (0)^{q}}\mathrm{E}\left(
\int_{0}^{1}\prod_{k=n_{t}}^{n}\Lambda ^{(k)}(ut)du\right) ^{1+q/p}=t^{-\log
_{b}\mathrm{E}\Lambda (0)^{q}}\mathrm{E}\left(
\int_{0}^{1}\prod_{k=n_{t}}^{n}\Lambda ^{(k)}(b^{k}ut)du\right) ^{q} \\
& =t^{-\log _{b}\mathrm{E}\Lambda (0)^{q}}\mathrm{E}\left(
\int_{0}^{1}\prod_{k=0}^{n-n_{t}}\Lambda ^{(k)}(b^{k}ub^{-[\log _{b}t]+\log
_{b}t})du\right) ^{q}=t^{-\log _{b}\mathrm{E}\Lambda (0)^{q}}\mathrm{E}%
\widetilde{A}_{n-n_{t}-1}(b^{-[\log _{b}t]+\log _{b}t})^{q}.
\end{align*}%
Now note that 
\begin{equation*}
\mathrm{E}\widetilde{A}_{n-n_{t}-1}(b^{-[\log _{b}t]+\log
_{b}t})^{q}=b^{[log_{b}t]-\log _{b}t}\mathrm{E}A(b^{-[\log _{b}t]+\log
_{b}t})^{q}\leq b\sup_{s\in \lbrack 0,1]}\mathrm{E}A(s)^{q}.
\end{equation*}

This bound is uniform in $n$ and therefore, 
\begin{equation*}
\mathrm{E}A(t)^{q}\leq bt^{q-\log _{b}\mathrm{E}\Lambda (0)^{q}}\sup_{s\in
\lbrack 0,1]}\mathrm{E}A(s)^{q}.
\end{equation*}

Now we turn to the lower bound for $q\ge 1$.

Since $\widetilde{A}_{n}(t)$ is a submartingale, 
\begin{equation*}
\mathrm{E}\widetilde{A}(t)^{q}\geq \mathrm{E}\widetilde{A}_{n_{t}}(t)^{q},
\end{equation*}%
where $n_{t}=[-\log _{b}t]$.

We are going to obtain a recursive estimate for $\mathrm{E}\widetilde{A}%
_{n}(t)$. First, 
\begin{align*}
\mathrm{E}\widetilde{A}_{n+1}(t)^{q}& =\mathrm{E}\left( \int_{0}^{1}\Lambda
_{n}(ut)\Lambda ^{(n+1)}(b^{n+1}ut)du\right) ^{q} \\
& =\mathrm{E}\left( \int_{0}^{1}\Lambda _{n}(ut)(\Lambda
^{(n+1)}(b^{n+1}ut)-\Lambda ^{(n+1)}(0))du+\widetilde{A}_{n}(t)\Lambda
^{(n+1)}(0)\right) ^{q}.
\end{align*}%
Now we can use an elementary estimate of the form: if $a+b>0$ and $b>0$ then 
\begin{equation}
(a+b)^{q}\geq qab^{q-1}+b^{q}  \label{eq_elem}
\end{equation}%
for $q\geq 1$. This estimate is easy to prove by analyzing the function $%
(1+t)^{q}-1-qt$ for $t\geq -1.$ Applying (\ref{eq_elem}) we obtain 
\begin{align}
\mathrm{E}\widetilde{A}_{n+1}(t)^{q}& \geq q\mathrm{E}\left[ \left( 
\widetilde{A}_{n}(t)\Lambda ^{(n+1)}(0)\right) ^{q-1}\int_{0}^{1}\Lambda
_{n}(ut)(\Lambda ^{(n+1)}(b^{n+1}ut)-\Lambda ^{(n+1)}(0))du\right]  \notag
\label{eq_E1E2} \\
& +\mathrm{E}\left( \widetilde{A}_{n}(t)\Lambda ^{(n+1)}(0)\right)
^{q}\equiv E_{1}+E_{2}.
\end{align}%
The second expectation is straightforward, 
\begin{equation}
E_{2}=\mathrm{E}\left( \int_{0}^{1}\Lambda _{n}(ut)\Lambda
^{(n+1)}(0)du\right) ^{q}=\mathrm{E}\Lambda (0)^{q}\mathrm{E}\widetilde{A}%
_{n}(t)^{q},  \label{eq_E2}
\end{equation}%
where we use independence of $\Lambda _{n}$ and $\Lambda ^{(n+1)}$. For the
first expectation, rearranging the terms, we have 
\begin{align*}
E_{1}& =q\mathrm{E}\left[ \int_{0}^{1}\widetilde{A}_{n}(t)^{q-1}\Lambda
_{n}(ut)(\Lambda ^{(n+1)}(0))^{q-1}(\Lambda ^{(n+1)}(b^{n+1}ut)-\Lambda
^{(n+1)}(0))du\right] \\
& =q\int_{0}^{1}\mathrm{E}\widetilde{A}_{n}(t)^{q-1}\Lambda _{n}(ut)\mathrm{E%
}(\Lambda ^{(n+1)}(0))^{q-1}(\Lambda ^{(n+1)}(b^{n+1}ut)-\Lambda
^{(n+1)}(0))du.
\end{align*}%
By the definition of $\rho _{q}$, see \eqref{eq_q1}, for all $u\in \lbrack
0,1],$ 
\begin{equation*}
\mathrm{E}(\Lambda ^{(n+1)}(0))^{q-1}(\Lambda ^{(n+1)}(b^{n+1}ut)-\Lambda
^{(n+1)}(0))\geq \mathrm{E}\Lambda (0)^{q}\rho _{q}(b^{n+1}t).
\end{equation*}%
Therefore, 
\begin{align*}
E_{1}& \geq q\int_{0}^{1}\mathrm{E}\widetilde{A}_{n}(t)^{q-1}\Lambda
_{n}(ut)du\mathrm{E}\Lambda (0)^{q}\rho _{q}(b^{n+1}t) \\
& \geq q\mathrm{E}\left[ \widetilde{A}_{n}(t)^{q-1}\int_{0}^{1}\Lambda
_{n}(ut)du\right] \mathrm{E}\Lambda (0)^{q}\rho _{q}(b^{n+1}t) \\
& =q\mathrm{E}\widetilde{A}_{n}(t)^{q}\mathrm{E}\Lambda (0)^{q}\rho
_{q}(b^{n+1}t)
\end{align*}%
Therefore 
\begin{equation*}
E_{1}\geq q\mathrm{E}\widetilde{A}_{n}(t)^{q}\mathrm{E}\Lambda (0)^{q}\rho
_{q}(b^{n-n_{t}}).
\end{equation*}%
The latter inequality together with (\ref{eq_E1E2}) and (\ref{eq_E2}) gives
us 
\begin{equation}
\mathrm{E}\widetilde{A}_{n+1}(t)^{q}\geq \mathrm{E}\widetilde{A}_{n}(t)^{q}%
\mathrm{E}\Lambda (0)^{q}\left( 1+q\rho _{q}(b^{n-n_{t}})\right)
\label{eq_ind_step}
\end{equation}%
Now we can iterate it. First fix $N^{\ast }$ such that $|q\rho
_{q}(b^{-n})|<1$ for $n>N^{\ast }$. Then, iterating (\ref{eq_ind_step}), we
obtain 
\begin{equation}  \label{eq.d}
\mathrm{E}\widetilde{A}_{n_{t}-N^{\ast }}^{q}\geq (\mathrm{E}\Lambda
(0)^{q})^{n_{t}-N^{\ast }}\prod_{n=0}^{n_{t}-N\ast }\left( 1+q\rho
_{q}(b^{n-n_{t}})\right) \geq (\mathrm{E}\Lambda (0)^{q})^{n_{t}-N^{\ast
}}\prod_{n=N^{\ast }}^{\infty }\left( 1+q\rho _{q}(b^{-n})\right) .
\end{equation}%
It is sufficient to note that the latter product is strictly positive due to
(\ref{eq:ass_scaling}). As $\widetilde{A}_{n}(t)^{q}$ is a submartingale, we
have $\mathrm{E}\widetilde{A}(t)^{q}\geq \mathrm{E}\widetilde{A}%
_{n_{t}-N^{\ast }}^{q}$ and the required lower bound for $q>1$ follows. One
can also see that $\widetilde A(t)$ is non-degenerate. Indeed, by our
assumptions $\mathrm{E }\Lambda(0)^q>0$ and the infinite product in %
\eqref{eq.d} is strictly positive.

The proof for $q\in (0,1)$ is symmetric. For these values of $q$ and a fixed 
$t$, the process $\widetilde{A}_{n}(t)^{q}$ is a supermartingale with
respect to the natural filtration $\mathcal{F}_{n}=\sigma (\Lambda
^{(1)},\ldots ,\Lambda ^{(n)})$. The bound from below is proved using the
reverse H\"{o}lder inequality for $q\in (0,1)$ and $p$ such that $1/p+1/q=1$:%
\begin{equation*}
\left( \int_{0}^{1}|fg|\right) \geq \left( \int_{0}^{1}|f|^{q}\right)
^{1/q}\left( \int_{0}^{1}|g|^{p}\right) ^{1/p}.
\end{equation*}%
Note that $p$ is negative. We are going to use this inequality in the form, 
\begin{equation*}
\left( \int_{0}^{1}|fg|\right) ^{q}=\left(
\int_{0}^{1}|f||g|^{1/q}|g|^{1/p}\right) ^{q}\geq \left(
\int_{0}^{1}|f|^{q}|g|\right) \left( \int_{0}^{1}|g|\right) ^{q/p},
\end{equation*}%
It follows from the latter inequality, 
\begin{equation*}
\left( \int_{0}^{1}\prod_{k=0}^{n}\Lambda _{k}(ut)du\right) ^{q}\geq \left(
\int_{0}^{1}\left( \prod_{k=0}^{n_{t}}\Lambda _{k}(ut)\right)
^{q}\prod_{k=n_{t}+1}^{n}\Lambda _{k}(ut)du\right) \left(
\int_{0}^{1}\prod_{k=n_{t}+1}^{n}\Lambda _{k}(ut)du\right) ^{q/p}.
\end{equation*}%
The rest of the proof goes exactly as the proof of the upper bound for $q>1$.

To prove the upper bound, we proceed similarly to the proof of the lower
bound for $q>1$. First we establish a recursive estimate. The elementary
inequality (\ref{eq_elem}) still holds (in the opposite direction), for $%
q\in (0,1)$,$(a+b)^{q}\leq qab^{q-1}+b^{q},$ for $a+b>0,b>0$. Repeating step
by step the arguments for $q>1$ we obtain an upper bound 
\begin{equation*}
\mathrm{E}\widetilde{A}_{n+1}(t)^{q}\leq \mathrm{E}\widetilde{A}_{n}(t)^{q}%
\mathrm{E}\Lambda (0)^{q}\left( 1+q\rho _{q}(b^{n-n_{t}})\right) .
\end{equation*}%
Applying this bound recursively 
\begin{equation*}
\mathrm{E}A_{n_{t}-N^{\ast }}^{q}\leq (\mathrm{E}\Lambda
(0)^{q})^{n_{t}-N^{\ast }}\prod_{n=0}^{n_{t}-N\ast }\left( 1+q\rho
_{q}(b^{n-n_{t}})\right) \leq (\mathrm{E}\Lambda (0)^{q})^{n_{t}-N^{\ast
}}\prod_{n=N^{\ast }}^{\infty }\left( 1+q\rho _{q}(b^{-n})\right) .
\end{equation*}%
It is sufficient to note that the latter product converge due to (\ref%
{eq:ass_scaling}). As $\widetilde{A}_{n}(t)^{q}$ is a supermartingale, we
have $\mathrm{E}\widetilde{A}(t)^{q}\leq \mathrm{E}\widetilde{A}%
_{n_{t}-N^{\ast }}^{q}$ and the required upper bound for $q<1$ follows.

\qed

\section{ Log-normal scenario with possible long-range dependence}

The log-normal hypothesis of Kolmogorov \cite{Kol62} features prominently in
turbulent cascades. In this section, we provide a related model, namely the
log-normal scenario, for multifractal products of stochastic processes. \ In
fact, this log-normal scenario has its origin in Kahane \cite{Kah85, Kah87}.
In this section we present a general result on log-normal scenario for a
model with possible long-range dependence.

In this Section we consider a mother process of the form 
\begin{equation}
\Lambda (t)=\exp \left\{ X(t)-\frac{1}{2}\sigma _{X}^{2}\right\} ,
\label{eq:defn.stationary.lognormal}
\end{equation}
where $X(t),$ $t\in \lbrack 0,1]$ is a zero-mean Gaussian, measurable,
separable stochastic process with covariance function 
\begin{equation}
R_{X}(\tau )=\sigma _{X}^{2}\mathrm{Corr}(X(t),X(t+\tau )))
\label{eq:defn.stationary.lognormal.cov}
\end{equation}
We combine Theorems \ref{Th3.1} and \ref{thm:scaling} for this special case
in order to have a precise scaling law for the moments.

For the log-normal process we obtain the following specifications of the
moment generating functions (\ref{eq:mgf1})\ and (\ref{eq:mgf2}): 
\begin{eqnarray*}
M(\zeta ) &=&\mathrm{E}\exp \left\{ \zeta \left( X(t)-\frac{1}{2}\sigma
_{X}^{2}\right) \right\} =e^{\frac{1}{2}\sigma _{X}^{2}(\zeta ^{2}-\zeta )},%
\text{\quad }\zeta \in \mathbb{R}^{1}, \\
M(\zeta _{1},\zeta _{2};t_{1}-t_{2}) &=&\mathrm{E}\exp \left\{ \zeta
_{1}\left( X(t_{1})-\frac{1}{2}\sigma _{X}^{2}\right) +\zeta _{2}\left(
X(t_{2})-\frac{1}{2}\sigma _{X}^{2}\right) \right\}  \notag \\
&=&\exp \left\{ \frac{1}{2}\sigma _{X}^{2}\left[ \zeta _{1}^{2}-\zeta
_{1}+\zeta _{2}^{2}-\zeta _{2}\right] +\zeta _{1}\zeta
_{2}R_{X}(t_{1}-t_{2})\right\} ,\text{\quad }\zeta _{1},\zeta _{2}\in 
\mathbb{R}^{1},
\end{eqnarray*}%
where $\sigma _{X}^{2}\in (0,\infty ).$ It turns out that, in this case, 
\begin{eqnarray*}
M(1) &=&1;\text{\quad }M(2)=e^{\sigma _{X}^{2}};\text{ \quad }\sigma
_{\Lambda }^{2}=e^{\sigma _{X}^{2}}-1; \\
\text{\textrm{Cov}}(\Lambda (t_{1}),\Lambda (t_{2}))
&=&M(1,1;t_{1}-t_{2})-1=e^{R_{X}(t_{1}-t_{2})}-1
\end{eqnarray*}%
and%
\begin{equation*}
\log _{b}\mathrm{E}\Lambda (t)^{q}=\frac{(q^{2}-q)\sigma _{X}^{2}}{2\log b}%
,\quad q>0.
\end{equation*}

Note that 
\begin{equation*}
e^{R_{X}(t_{1}-t_{2})}-1\geq R_{X}(t_{1}-t_{2}).
\end{equation*}

Using Theorem~\ref{Th3.1} and Theorem~\ref{thm:scaling} we obtain

\begin{theorem}
\label{Th5.1} Let $X(t)$ be a zero-mean Gaussian measurable separable
stochastic process with the correlation function 
\begin{equation}
\mathrm{Corr}(X(t),X(t+\tau ))\leq C\tau ^{-\alpha },\text{\quad }\alpha >0,
\label{3.3}
\end{equation}%
for sufficiently large $\tau $, and for some $a>0$, 
\begin{equation}
1-\mathrm{Corr}(X(t),X(t+\tau ))\leq C\left\vert \tau \right\vert ^{a},
\label{cond0}
\end{equation}%
for sufficiently small $\tau $. Assume that 
\begin{equation}
b>\exp \left\{ q^{\ast }\sigma _{X}^{2}/2\right\} ,  \label{cond1}
\end{equation}%
where $q^{\ast }\geq 2$ is a fixed integer. Then the stochastic processes 
\begin{equation*}
A_{n}(t)=\int_{0}^{t}\prod_{j=0}^{n}\Lambda ^{(j)}\left( sb^{j}\right)
ds,t\in \lbrack 0,1]
\end{equation*}%
converge in $\mathcal{L}_{q},0<q\leq q^{\ast }$ to the stochastic process $%
A(t),t\in \lbrack 0,1],$ as $n\rightarrow \infty ,$ such that 
\begin{equation}
\mathrm{E}A(t)^{q}\sim t^{\varsigma (q)},q\in \lbrack 0,q^{\ast }],
\label{lognormal_scaling}
\end{equation}%
and the scaling function is given by 
\begin{equation*}
\varsigma (q)=-aq^{2}+(a+1)q,q\in \lbrack 0,q^{\ast }],
\end{equation*}%
where 
\begin{equation*}
a=\frac{\sigma _{X}^{2}}{2\log b}.
\end{equation*}%
Moreover, if 
\begin{equation*}
\mathrm{Corr}(X(t),X(t+\tau ))=\frac{L(\tau )}{\left\vert \tau \right\vert
^{\alpha }},\alpha >0,
\end{equation*}%
where $L$ is a slowly varying at infinity function, bounded on every bounded
interval, then 
\begin{equation}
\text{\textrm{Var}}A(t)\geqslant t^{2-\alpha }\sigma
_{X}^{2}\int_{0}^{1}\int_{0}^{1}\frac{L(t\left\vert u-v\right\vert )dudw}{%
L(t)\left\vert u-w\right\vert ^{\alpha }},0<\alpha <1,  \label{3.4}
\end{equation}%
and 
\begin{equation}
\text{\textrm{Var}}A(t)\geqslant 2t\sigma _{X}^{2}\int_{0}^{t}(1-\frac{u}{t})%
\frac{L(u)}{\left\vert u\right\vert ^{\alpha }}du,\alpha \geq 1.
\label{3.41}
\end{equation}
\end{theorem}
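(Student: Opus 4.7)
\textit{Proof plan for Theorem~\ref{Th5.1}.} The strategy is to combine Theorem~\ref{Th3.1} (for $\mathcal{L}_q$ convergence) with Theorem~\ref{thm:scaling} (for the scaling bounds), and then invoke Theorem~\ref{Th2.1} for the R\'enyi function and the variance lower bound. The starting observation is the explicit computation of the moment generating function: under \textbf{B}$^{\prime}$, $M(q)=\mathrm{E}\Lambda(0)^{q}=\exp\{(q^{2}-q)\sigma_{X}^{2}/2\}$, so that
\begin{equation*}
q-\log_{b}\mathrm{E}\Lambda(0)^{q}=q-\frac{(q^{2}-q)\sigma_{X}^{2}}{2\log b}=-aq^{2}+(a+1)q,
\end{equation*}
with $a=\sigma_{X}^{2}/(2\log b)$. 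Thus the scaling exponent claimed in~\eqref{lognormal_scaling} is exactly the one produced by Theorem~\ref{thm:scaling}, and the R\'enyi function then follows from formula~\eqref{2.7} of Theorem~\ref{Th2.1}.

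For the $\mathcal{L}_{q}$ convergence I would verify the hypotheses of Theorem~\ref{Th3.1} for every $q\in(1,q^{\ast}]$ (the case $q\in(0,1]$ is automatic since $A_{n}(t)$ is a nonnegative martingale with mean $t$). The growth condition~\eqref{eq:b.greater}, $b^{q-1}>\mathrm{E}\Lambda(0)^{q}$, reduces to $\log b>q\sigma_{X}^{2}/2$, which is precisely~\eqref{cond1} for $q\le q^{\ast}$. The function $\rho$ defined in~\eqref{RO} can be computed in closed form by Gaussianity: writing $s_{0}=0$ and $s_{i}=u_{1}+\cdots+u_{i}$,
\begin{equation*}
\rho(u_{1},\ldots,u_{q-1})=\exp\Bigl\{\sum_{0\le i<j\le q-1}R_{X}(s_{j}-s_{i})\Bigr\}.
\end{equation*}
Monotone decrease in each $u_{k}$ then follows from monotone decrease of $R_{X}$ in $|\tau|$; the mixing condition~\eqref{eq:mixing} follows because $R_{X}(\tau)\to0$ by~\eqref{3.3}; and~\eqref{eq:sum_rho_finite} is a consequence of $\rho(b^{n},\ldots,b^{n})-1\le C\sum_{i<j}R_{X}(b^{n}(j-i))\le C'b^{-n\alpha}$, which is summable.

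Next, to apply Theorem~\ref{thm:scaling} I would verify~\eqref{eq:ass_scaling}. A direct Gaussian calculation gives
\begin{equation*}
\frac{\mathrm{E}\Lambda(0)^{q-1}\Lambda(su)}{\mathrm{E}\Lambda(0)^{q}}=\exp\bigl\{-(q-1)\sigma_{X}^{2}\bigl(1-\mathrm{Corr}(X(0),X(su))\bigr)\bigr\},
\end{equation*}
so by~\eqref{cond0},
\begin{equation*}
|\rho_{q}(s)|\le C(q-1)\sigma_{X}^{2}\,s^{a}\quad\text{for small }s,
\end{equation*}
and hence $\sum_{n}|\rho_{q}(b^{-n})|\le C\sum_{n}b^{-na}<\infty$. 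Together with the $\mathcal{L}_{q}$ convergence already established, Theorem~\ref{thm:scaling} yields~\eqref{lognormal_scaling}. The R\'enyi function is then read off from~\eqref{2.7}, and produces $T(q)=-aq^{2}+(a+1)q-1$.

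Finally, for the variance bounds~\eqref{3.4} and~\eqref{3.41}, I would apply~\eqref{2.8} of Theorem~\ref{Th2.1}, together with the pointwise inequality $e^{R_{X}(\tau)}-1\ge R_{X}(\tau)$ noted just before the theorem, to obtain
\begin{equation*}
\mathrm{Var}\,A(t)\ge\int_{0}^{t}\!\!\int_{0}^{t}R_{X}(s-u)\,ds\,du.
\end{equation*}
Inserting the slowly varying form of $\mathrm{Corr}(X(t),X(t+\tau))$ and rescaling $s=tu$, $u=tv$ produces~\eqref{3.4} for $0<\alpha<1$, while for $\alpha\ge1$ the trivial change of variables to $\tau=s-u$ yields~\eqref{3.41}. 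The main technical obstacle I anticipate is the verification of the monotonicity of $\rho$ in all its variables, since~\eqref{3.3} only controls $R_{X}$ at large lags; a clean argument will require assuming (or arguing from context) that $R_{X}(\tau)$ is non-increasing in $|\tau|$, a property which is natural but not explicitly stated in~\textbf{B}$^{\prime}$.
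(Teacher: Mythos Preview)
Your proposal is correct and follows essentially the same route as the paper: verify the hypotheses of Theorem~\ref{Th3.1} via the explicit Gaussian formula $\rho(u_{1},\ldots,u_{q-1})=\exp\{\sum_{i<j}R_{X}(s_{j}-s_{i})\}$, then apply Theorem~\ref{thm:scaling} after bounding $|\rho_{q}(s)|$ by $(q-1)\sigma_{X}^{2}s^{a}$ through the same one-line Gaussian computation you wrote down. The paper, like you, tacitly assumes $R_{X}$ is non-increasing in $|\tau|$ to get the coordinatewise monotonicity of $\rho$; your flagging of this point is apt, and your treatment of the variance bounds via~\eqref{2.8} and $e^{R_{X}}-1\ge R_{X}$ makes explicit a step the paper leaves to the reader.
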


\begin{remark}
We interpret the inequality (\ref{3.4}) as a form of long-range dependence
of the limiting process.
\end{remark}

\begin{remark}
Note that the correlation function $\mathrm{Corr}(X(t),X(t+\tau
))=(1+\left\vert \tau \right\vert ^{2})^{-\alpha /2},$ $\alpha >0,$
satisfies all assumptions of the Theorem 2 (with $L(\tau )=\left\vert \tau
\right\vert ^{\alpha }/(1+\left\vert \tau \right\vert ^{2})^{\alpha /2}$),
among the others.
\end{remark}

\begin{proof}
We will prove $\mathcal{L}_{q^\ast}$ convergence by applying Theorem~\ref%
{Th3.1}, where $q^\ast\ge 2$ is an integer. Hence $\mathcal{L}_{q}$
convergence will hold forn any $q\ge q^\ast$. To simplify notation we will
write $q$ instead of $q^\ast$ when proving $\mathcal{L}_{q^\ast}$
convergence.

The moment generating function of the multidimensional normal distribution
is given by the following expression 
\begin{equation*}
M(\zeta _{1},\zeta _{2},\ldots ,\zeta _{q})=\mathrm{E}e^{\zeta
_{1}X(s_{1})+\cdots +\zeta _{p}X(s_{q})}=\exp \left\{\frac{1}{2}
\sum_{i=1}^{q}\sum_{j=1}^{q}\zeta_i\zeta_jR_{X}(|s_{i}-s_{j}|)\right\}.
\end{equation*}

One can immediately see that%
\begin{equation*}
\mathrm{E}\left( \Lambda (s_{1})\Lambda (s_{2})\ldots \Lambda (s_{q})\right)
=\mathrm{E}e^{X(s_{1})-\frac{1}{2}\sigma _{X}^{2}}\ldots e^{X(s_{q})-\frac{1%
}{2}\sigma _{X}^{2}}
\end{equation*}%
\begin{equation*}
=M(1,1,\ldots ,1)e^{-\frac{q}{2}\sigma _{X}^{2}}=e^{\frac{1}{2}%
\sum_{i=1}^{q}\sum_{j=1}^{q}R_{X}(|s_{i}-s_{j}|)}=e^{-\frac{q}{2}\sigma
_{X}^{2}=e^{\sum_{1\leq i<j\leq q}R_{X}(s_{j}-s_{i})}}.
\end{equation*}
We can now substitute this into (~\ref{RO}) and obtain 
\begin{equation*}
\rho (u_{1},u_{2},\ldots ,u_{q-1})=\exp \left\{ \sum_{1\leq i<j\leq
q-1}R_{X}(u_{i}+\cdots +u_{j})\right\} .
\end{equation*}%
Since the function $R_{X}(u)$ is monotone decreasing in $u$, function $\rho
(u_{1},\ldots ,u_{q-1})$ is monotone decreasing in all arguments. Next we
need to check the mixing condition (\ref{eq:mixing}). Let $1\leq
i_{1}<i_{2}<\ldots \leq i_{m}$ and $u_{i}=A$ if $i\in \{i_{1},\ldots
,i_{m}\} $ and $0$ otherwise. Then, as $A\rightarrow \infty $, and $%
i_{0}=0,i_{m+1}=q$ 
\begin{multline}
\lim_{A\rightarrow \infty }\rho (u_{1},\ldots ,u_{q-1})=\exp \left\{
\sum_{1\leq k\leq m+1}\sum_{i_{k-1}<i<j<i_{k}}R_{X}(u_{i}+\cdots
+u_{j})\right\}  \label{eq:lognormal.mixing} \\
=\mathrm{E}\Lambda (0)^{i_{1}}\mathrm{E}\Lambda (0)^{i_{2}-i_{1}}\cdot
\ldots \mathrm{E}\Lambda (0)^{q-i_{m}},
\end{multline}%
where we used that $\mathrm{E}\Lambda (0)^{l}=e^{\frac{l(l-1)}{2}\sigma
_{X}^{2}}.$ Finally, we should check the convergence of the series (\ref%
{eq:sum_rho_finite}). We have, 
\begin{equation*}
\exp \{R_{X}(qb^{n}\}\leq \rho (b^{n},\ldots ,b^{n})\leq \exp \left\{ \frac{%
q(q-1)}{2}R_{X}(b^{n}\right\} .
\end{equation*}%
As $n\rightarrow \infty $, $R_{X}(b^{n})\rightarrow 0$. Hence 
\begin{equation*}
(1+o(1))R_{X}(qb^{n})\leq \rho (b^{n},\ldots ,b^{n})-1\leq (1+o(1))\frac{%
q(q-1)}{2}R_{X}(b^{n}).
\end{equation*}%
As both sums 
\begin{equation*}
\sum_{n=1}^{\infty }R_{X}(qb^{n})<\infty ,\quad \sum_{n=1}^{\infty
}R_{X}(b^{n})<\infty ,
\end{equation*}%
the convergence of the series (\ref{eq:sum_rho_finite}) follows. Condition (%
\ref{eq:b.greater}) becomes 
\begin{equation*}
b^{q-1}>\mathrm{E}\Lambda (0)^{q}=\exp \left\{ \frac{q(q-1)}{2}\sigma
_{X}^{2}\right\} ,
\end{equation*}%
which is equivalent to (\ref{cond1}).

Next we are going to prove scaling (\ref{lognormal_scaling}). For that we
apply the results of Section \ref{section_scaling}. We now do not assume
that $q$ is an integer. We need to show that (\ref{eq:ass_scaling}) holds
for $\rho _{q},$ where $q\in (0,q^{\star })$ and $\rho _{q}$ is defined in (%
\ref{eq_q1}) and (\ref{eq_q2}). For $q>1$ we have, for sufficiently small $s$%
, 
\begin{align*}
|\rho _{q}(s)|& =-\inf_{u\leq 1}\left( \frac{\mathrm{E}\Lambda
(0)^{q-1}\Lambda (su)}{\mathrm{E}\Lambda (0)^{q}}-1\right) =-\inf_{u\leq
1}\left( e^{\sigma _{X}^{2}((q-1)\rho _{X}(su)+1-q)}-1\right) \\
& \leq \sup_{u\leq 1}\left( 1-e^{(1-q)\sigma _{X}^{2}(su)^{a}}\right) \leq
1-e^{(1-q)\sigma _{X}^{2}(s)^{a}}\leq (q-1)\sigma _{X}^{2}s^{a}.
\end{align*}%
Thus using condition (\ref{cond0}) one can immediately see that the series (%
\ref{eq:ass_scaling}) converges. For $q<1$, the same arguments give the
bound 
\begin{equation*}
\rho _{q}(s)\leq (1-q)\sigma _{X}^{2}s^{a}.
\end{equation*}%
Using condition (\ref{cond0}) one can immediately see that the series (\ref%
{eq:ass_scaling}) converges. Therefore, by the results of Section \ref%
{section_scaling} scaling (\ref{lognormal_scaling}) holds. \bigskip
\end{proof}

\section{Geometric Ornstein-Uhlenbeck processes}

This section reviews a number of known results on L\'{e}vy processes (see
Skorokhod 1991, Bertoin 1996, Kyprianou 2006) and OU type processes (see
Barndorff-Nielsen 1998, 2001, Barndorff-Nielsen and Shephard 2001) \ The
geometric OU type processes have been studied also by Matsui and Shieh
(2009).

As standard notation we will write 
\begin{equation*}
\kappa (z)=C\left\{ z;X\right\} =\log \mathrm{E}\exp \left\{ izX\right\}
,\quad z\in \mathbb{R}
\end{equation*}%
for the cumulant function of a random variable $X$, and 
\begin{equation*}
K\left\{ \zeta ;X\right\} =\log \mathrm{E}\exp \left\{ \zeta X\right\} ,%
\text{ \ \ \ }\zeta \in D\subseteq \mathbb{C}
\end{equation*}%
for the L\'{e}vy exponent or Laplace transform or cumulant generating
function of the random variable $X.$ Its domain $D$ includes the imaginary
axis and frequently larger areas.

A random variable $X$ is infinitely divisible if its cumulant function has
the L\'{e}vy-Khintchine form 
\begin{equation}
C\left\{ z;X\right\} =iaz-\frac{d}{2}z^{2}+\int_{\mathbb{R}}\left(
e^{izu}-1-izu\mathbf{1}_{\left[ -1,1\right] }\left( u\right) \right) \nu
\left( du\right) ,  \label{3.1}
\end{equation}%
where $a\in \mathbb{R},$ $d\geq 0$ and $\nu $ is the L\'{e}vy measure, that
is, a non-negative measure on $\mathbb{R}$ such that 
\begin{equation}
\nu \left( \left\{ 0\right\} \right) =0,\text{ \ \ \ \ }\int_{\mathbb{R}%
}\min \left( 1,u^{2}\right) \nu \left( du\right) <\infty .
\label{eq:Levy-Khinchine.exponent.cond}
\end{equation}%
The triplet $(a,d,\nu )$ uniquely determines the random variable $X.$ For a
Gaussian random variable $X\thicksim N\left( a,d\right) ,$ the L\'{e}vy
triplet takes the form $\left( a,d,0\right) .$

A random variable $X$ is self-decomposable if, for all $c\in \left(
0,1\right) ,$ the characteristic function $f\left( z\right) $ of $X$ can be
factorized as $f\left( z\right) =f\left( cz\right) f_{c}\left( z\right) $
for some characteristic function $f_{c}\left( z\right) ,$ $z\in \mathbb{R}.$
A homogeneous L\'{e}vy process $Z=\{Z\left( t\right) ,$ $t\geq 0\}$ is a
continuous (in probability), c\`{a}dl\`{a}g process with independent and
stationary increments and $Z\left( 0\right) =0$ (recalling that a c\`{a}dl%
\`{a}g process has right-continuous sample paths with existing left limits.)
For such processes we have $C\left\{ z;Z\left( t\right) \right\} =tC\left\{
z;Z\left( 1\right) \right\} $ and $Z\left( 1\right) $ has the L\'{e}%
vy-Khintchine representation (\ref{3.1}).

If $X$ is self-decomposable, then there exists a stationary stochastic
process $\{X\left( t\right) ,$ $t\geq 0\},$ such that $X\left( t\right) 
\overset{d}{=}X\,\,$and 
\begin{equation}
X\left( t\right) =e^{-\lambda t}X\left( 0\right) +\int_{(0,t]}e^{-\lambda
\left( t-s\right) }dZ\left( \lambda s\right) ,  \label{eq:defn.ou.int}
\end{equation}%
for all $\lambda >0$ (see Barndorff-Nielsen 1998). Conversely, if $\left\{
X\left( t\right) ,t\geq 0\right\} $ is a stationary process and $\left\{
Z\left( t\right) ,t\geq 0\right\} $ is a L\'{e}vy process, independent of $%
X\left( 0\right) ,$ such that $X\left( t\right) $ and $Z\left( t\right) $
satisfy the It\^{o} stochastic differential equation 
\begin{equation}
dX\left( t\right) =-\lambda X\left( t\right) dt+dZ\left( \lambda t\right) ,
\label{eq:defn.ou}
\end{equation}%
for all $\lambda >0,$ then $X\left( t\right) $ is self-decomposable. A
stationary process $X\left( t\right) $ of this kind is said to be an OU type
process. The process $Z\left( t\right) $ is termed the background driving L%
\'{e}vy process (BDLP) corresponding to the process $X\left( t\right) .$ In
fact (\ref{eq:defn.ou.int}) is the unique (up to indistinguishability)
strong solution to Eq. (\ref{eq:defn.ou}) (Sato 1999, Section 17). The
meaning of the stochastic integral in (\ref{eq:defn.ou.int}) was detailed in
Applebaum (2009, p. 214).

Let $X\left( t\right) $ be a square integrable OU process. Then $X\left(
t\right) $ has the correlation function 
\begin{equation}
\mathrm{Corr}(X(0),X(t))=r_{X}\left( t\right) =\exp \left\{ -\lambda
\left\vert t\right\vert \right\} .  \label{3.5}
\end{equation}%
The cumulant transforms of $X=X(t)$ and $Z\left( 1\right) $ are related by 
\begin{equation*}
C\left\{ z;X\right\} =\int_{0}^{\infty }C\left\{ e^{-s}z;Z\left( 1\right)
\right\} ds=\int_{0}^{{}z}C\left\{ \xi ;Z\left( 1\right) \right\} \frac{d\xi 
}{\xi },C\left\{ z;Z\left( 1\right) \right\} =z\frac{\partial C\left\{
z;X\right\} }{\partial z}.
\end{equation*}%
Suppose that the L\'{e}vy measure $\nu $ of $X$ has a density function $%
p\left( u\right) ,u\in \mathbb{R},$ which is differentiable. Then the L\'{e}%
vy measure $\tilde{\nu}$ of $Z\left( 1\right) $ has a density function $%
q\left( u\right) ,u\in \mathbb{R}$, and $p\,$and $q$ are related by 
\begin{equation}
q\left( u\right) =-p\left( u\right) -up^{\prime }\left( u\right)  \label{3.6}
\end{equation}%
(see Barndorff-Nielsen 1998).

The logarithm of the characteristic function of a random vector $\left(
X(t_{1}),...,X(t_{m})\right) $ is of the form%
\begin{equation}
\log \mathrm{E\exp }\left\{ i(z_{1}X(t_{1})+...+z_{m}X(t_{m})\right\} =\int_{%
\mathbb{R}}\kappa (\sum_{j=1}^{m}z_{j}e^{-\lambda (t_{j}-s)}\mathbf{1}
_{[0,\infty )}(t_{j}-s))ds,  \label{3.7}
\end{equation}%
where%
\begin{equation*}
\kappa (z)=\log \mathrm{E\exp }\left\{ izZ(1)\right\} =C\left\{
z;Z(1)\right\} ,
\end{equation*}%
and the function (\ref{3.7}) has the form (\ref{3.1}) with L\'{e}vy triplet $%
(\tilde{a},$ $\tilde{d},$ $\tilde{\nu})$ of $Z(1).$

The logarithms of the moment generation functions (if they exist) take the
forms%
\begin{equation*}
\log \mathrm{E\exp }\left\{ \zeta X(t)\right\} =\zeta a+\frac{d}{2}\zeta
^{2}+\int_{\mathbb{R}}(e^{\zeta u}-1-\zeta u\mathbf{1}_{\left[ -1,1\right]
}\left( u\right) )\nu \left( du\right) ,
\end{equation*}%
where $(a,d,\nu )$ is the L\'{e}vy triplet of $X(0)$, or in terms of the L%
\'{e}vy triplet $(\tilde{a},$ $\tilde{d},$ $\tilde{\nu})$ of $Z(1)$%
\begin{align}
& \log \mathrm{E\exp }\left\{ \zeta X(t)\right\} =\tilde{a}\int_{\mathbb{R}%
}(\zeta e^{-\lambda (t-s)}\mathbf{1}_{[0,\infty )}(t-s))ds+\frac{\tilde{d}}{2%
}\zeta ^{2}\int_{\mathbb{R}}(\zeta e^{-\lambda (t-s)}\mathbf{1}_{[0,\infty
)}(t-s))^{2}ds  \notag \\
& +\int_{\mathbb{R}}\int_{\mathbb{R}}[\exp \left\{ u\zeta e^{-\lambda (t-s)}%
\mathbf{1}_{[0,\infty )}(t-s)\right\} -1-u\left( \zeta e^{-\lambda (t-s)}%
\mathbf{1}_{[0,\infty )}(t-s)\right) \mathbf{1}_{\left[ -1,1\right] }\left(
u\right) ]\tilde{\nu}\left( du\right) ds,  \label{3.8}
\end{align}%
and%
\begin{align}
& \log \mathrm{E}\exp \{\zeta _{1}X(t_{1})+\zeta _{2}X(t_{2})\}  \notag \\
& =\tilde{a}\int_{\mathbb{R}}\left( \sum_{j=1}^{2}\zeta _{j}e^{-\lambda
(t_{j}-s)}\mathbf{1}_{[0,\infty )}(t_{j}-s))ds+\frac{\tilde{d}}{2}\zeta
^{2}\int_{\mathbb{R}}(\sum_{j=1}^{2}\zeta _{j}e^{-\lambda (t_{j}-s)}\mathbf{1%
}_{[0,\infty )}(t_{j}-s))^{2}\right) ds  \notag \\
& \hspace{0.5cm}+\int_{\mathbb{R}}\int_{\mathbb{R}}[\exp \left\{
u\sum_{j=1}^{2}\zeta _{j}e^{-\lambda (t_{j}-s)}\mathbf{1}_{[0,\infty
)}(t_{j}-s)\right\} -1  \notag \\
& \hspace{0.5cm}-u\left( \sum_{j=1}^{2}\zeta _{j}e^{-\lambda (t_{j}-s)}%
\mathbf{1}_{[0,\infty )}(t_{j}-s)\right) \mathbf{1}_{\left[ -1,1\right]
}\left( u\right) ]\tilde{\nu}\left( du\right) ds.  \label{eq39}
\end{align}

Let us consider a geometric OU-type process as the mother process: 
\begin{equation*}
\Lambda (t)=e^{X(t)-c_{X}},c_{X}=\log \mathrm{E}e^{X(0)},M(\zeta )=\mathrm{E}%
e^{\zeta (X(t)-c_{X})}, M_0(\zeta )=\mathrm{E}e^{\zeta X(t)}
\end{equation*}%
where $X(t),t\in \mathbb{R}_{+}$, is the OU-type stationary process (\ref%
{eq:defn.ou.int}). Note that 
\begin{equation*}
\frac{M_{0}(q)}{M_{0}(1)^{q}}=\frac{M(q)}{M(1)^{q}}.
\end{equation*}

Then the correlation function of the mother process is of the form. 
\begin{equation}
\mathrm{Corr}(\Lambda (t),\Lambda (t+\tau ))=\frac{M(1,1;\tau )-1}{M(2)-1},
\label{OU03}
\end{equation}%
where now 
\begin{align}
M(\zeta _{1},\zeta _{2};\tau )&=\mathrm{E}\exp \{\zeta
_{1}(X(t_{1})-c_{X})+\zeta _{2}(X(t_{2})-c_{X})\}  \notag \\
&=\exp \left\{ -(\zeta _{1}+\zeta _{2})c_{X}\right\} \mathrm{E}\exp \{\zeta
_{1}X(t_{1})+\zeta _{2}X(t_{2})\},  \label{OUM}
\end{align}%
and $\mathrm{E}\exp \{\zeta _{1}X(t_{1})+\zeta _{2}X(t_{2})\}$ is defined by
(\ref{eq39}).

To prove that a geometric OU process satisfies the covariance decay
condition (\ref{eq:ass_scaling}) in Theorem \textit{\ref{thm:scaling}}, the
expression given by (\ref{eq39}) is not ready to yield the decay as $%
t_{2}-t_{1}\rightarrow \infty $. 

The following result plays a key role in multifractal analysis of geometric
OU processes.

\begin{theorem}
\label{thm:ou} Let $X(t),t\in \mathbb{R}_{+}$ be an OU-type stationary
process (\ref{eq:defn.ou.int}) such that the L\'{e}vy measure $\nu $ in (\ref%
{3.1}) of the random variable $X(0)$ satisfies the condition: for an integer 
$q^{\ast }\geq 2,$ 
\begin{equation}
\int_{\left\vert x\right\vert \geq 1}xe^{q^{\ast }x}\nu (dx)<\infty .
\label{eq:existence.derivative}
\end{equation}%
Then, for any fixed $b$ such that 
\begin{equation}
b>\left\{ \frac{M_{0}(q^{\ast })}{M_{0}(1)^{q^{\ast }}}\right\} ^{\frac{1}{%
q^{\ast }-1}},  \label{eq:ou.cond.b}
\end{equation}%
the sequence of stochastic processes 
\begin{equation*}
A_{n}(t)=\int_{0}^{t}\prod_{j=0}^{n}\Lambda ^{(j)}\left( sb^{j}\right)
ds,t\in \lbrack 0,1]
\end{equation*}%
converges in $\mathcal{L}_{q}$ to the stochastic process $A(t)\in \mathcal{L}%
_{q},$ as $n\rightarrow \infty $, for every fixed $t\in \lbrack 0,1]$. The
limiting process $A(t),t\in \lbrack 0,1]$ satisfies 
\begin{equation*}
\mathrm{E}A^{q}(t)\sim t^{q-\log _{b}\mathrm{E}\Lambda ^{q}\left( t\right)
},\quad q\in \lbrack 0,q^{\ast }].
\end{equation*}%
The scaling function is given by 
\begin{equation}
\varsigma (q)=q-\log _{b}\mathrm{E}\Lambda ^{q}\left( t\right) =q\left( 1+%
\frac{c_{X}}{\log b}\right) -\log _{b}M_{0}(q),\quad q\in \lbrack 0,q^{\ast
}].  \label{R1}
\end{equation}%
In addition, 
\begin{equation}
\mathrm{Var}A(t)\geqslant 2t\int_{0}^{t}\left( 1-\frac{s}{t}\right)
(M(1,1;s)-1)ds,  \label{R2}
\end{equation}%
where the bivariate moment generating function $M(\zeta _{1},\zeta
_{2};t_{1}-t_{2})$ is given by (\ref{OUM})
\end{theorem}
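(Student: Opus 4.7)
The plan is to verify the hypotheses of Theorem~\ref{Th3.1} to obtain $\mathcal{L}_q$ convergence, then those of Theorem~\ref{thm:scaling} to obtain the moment scaling, and finally to read off the Rényi function from \eqref{2.7} and the variance bound from \eqref{2.8}.

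First I would produce an explicit formula for $\rho(u_1,\ldots,u_{q-1})$ by extending \eqref{eq39} to $q$ times. Setting $t_1\le t_2\le\cdots\le t_q$ with spacings $u_k=t_{k+1}-t_k$, the Lévy-based representation \eqref{eq:defn.ou.int} yields
\begin{equation*}
\mathrm{E}\exp\left\{\sum_{j=1}^q X(t_j)\right\}=\exp\left\{\int_{-\infty}^{t_q} K\Bigl(\sum_{j:\,t_j\ge s}e^{-\lambda(t_j-s)};Z(1)\Bigr)\lambda\,ds\right\},
\end{equation*}
which, split over the intervals $(t_{i-1},t_i]$, makes the dependence on each spacing $u_k$ explicit. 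The moment hypothesis \eqref{eq:existence.derivative} ensures that $K(\zeta;Z(1))$ is well-defined for $0\le\zeta\le q^{\ast}$, so all moments $M(q)=M_0(q)e^{-qc_X}$ with $q\le q^{\ast}$ are finite.

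I then verify the three structural conditions of Theorem~\ref{Th3.1}. Monotonicity of $\rho$ in each $u_k$ follows from the convexity of $K$ combined with the fact that enlarging $u_k$ strictly reduces the overlap of the exponential kernels inside the integral above. The mixing condition \eqref{eq:mixing} is immediate: when the spacings $u_{i_j}$ tend to infinity while the others vanish, the integral decouples into contributions from disjoint clusters whose product is precisely $\prod_k\mathrm{E}\Lambda(0)^{n_k}$. Summability \eqref{eq:sum_rho_finite} follows because a first-order expansion of the integral gives $\rho(b^n,\ldots,b^n)-1=O(e^{-\lambda b^n})$. Finally, since $M(q^{\ast})=M_0(q^{\ast})/M_0(1)^{q^{\ast}}$, hypothesis \eqref{eq:ou.cond.b} is exactly \eqref{eq:b.greater} with $q=q^{\ast}$. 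Theorem~\ref{Th3.1} then gives $\mathcal{L}_{q^{\ast}}$-convergence, hence $\mathcal{L}_q$-convergence for every $q\in(0,q^{\ast}]$.

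For the scaling, I check \eqref{eq:ass_scaling}. A first-order Taylor expansion of the joint MGF in the spacing variable shows $|\rho_q(s)|=O(s)$ as $s\downarrow 0$, so $\sum_{n\ge 1}|\rho_q(b^{-n})|\le C\sum b^{-n}<\infty$, and Theorem~\ref{thm:scaling} yields $\mathrm{E}A(t)^q\sim t^{q-\log_b\mathrm{E}\Lambda^q(t)}$. Substituting $\mathrm{E}\Lambda^q(t)=M_0(q)e^{-qc_X}$ into \eqref{2.7} gives \eqref{R1}. For \eqref{R2} I invoke \eqref{2.8}; positive correlation of $\Lambda$, needed there, follows from convexity of $\psi(\zeta)=\log M_0(\zeta)$: writing $X(\tau)=e^{-\lambda\tau}X(0)+Z_\tau$ with $Z_\tau$ independent of $X(0)$, monotonicity of $\psi'$ gives $\psi(1+e^{-\lambda\tau})-\psi(1)\ge\psi(e^{-\lambda\tau})-\psi(0)$, hence $M_0(1,1;\tau)\ge M_0(1)^2$, i.e.\ $M(1,1;\tau)\ge 1$. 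The identity $\mathrm{Var}\int_0^t\Lambda(s)ds=2\int_0^t(t-\tau)(M(1,1;\tau)-1)\,d\tau=2t\int_0^t(1-\tau/t)(M(1,1;\tau)-1)\,d\tau$ then delivers \eqref{R2}.

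The main obstacle is carrying out the analysis with the Lévy representation cleanly: the integrals defining $\rho$ couple the spacings through overlapping exponential kernels, and one must carefully isolate each $u_k$ to extract monotonicity, the rate of approach to the mixing limit, and the $O(s)$ estimate for $\rho_q$. The moment hypothesis \eqref{eq:existence.derivative} is precisely what makes the integrand in the Lévy-Khintchine representation integrable up to $\zeta=q^{\ast}$, and everything else is driven by the exponential decay of the OU kernel $e^{-\lambda t}$.
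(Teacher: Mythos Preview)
Your plan is sound and tracks the paper's strategy of reducing everything to Theorems~\ref{Th3.1} and~\ref{thm:scaling}, but the technical route you choose for the verification differs from the paper's and leaves the monotonicity step under-argued.

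The paper does not work with the Lévy integral representation you wrote. Instead it exploits the Markov property of the OU process recursively: writing $X(s_q)=e^{-\lambda u_{q-1}}X(s_{q-1})+\text{(independent increment)}$ and iterating, one obtains the closed product formula
\[
\rho(u_1,\ldots,u_{q-1})=\prod_{j=1}^{q-1}\frac{M_0\!\left(1+\sum_{k=j}^{q-1}e^{-\lambda\sum_{l=j}^{k}u_l}\right)}{M_0(1)\,M_0\!\left(\sum_{k=j}^{q-1}e^{-\lambda\sum_{l=j}^{k}u_l}\right)}.
\]
Monotonicity then reduces to the single fact that $s\mapsto M_0(1+s)/M_0(s)$ is increasing, which is immediate from convexity of $\log M_0$; mixing and the summability bound $\rho(b^n,\ldots,b^n)-1=O(e^{-\lambda b^n})$ drop out of the same formula via an elementary lemma bounding $M_0(1+s)/(M_0(1)M_0(s))$ by $C^s$. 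Similarly, the paper computes $\mathrm{E}\Lambda(0)^{q-1}\Lambda(t)=M_0(q-1+e^{-\lambda t})M_0(1)/\bigl(M_0(q)M_0(e^{-\lambda t})\bigr)$ explicitly, and the bound $|\rho_q(s)|\le\lambda s\bigl(f'(q)-f(1)\bigr)$ with $f=\log M_0$ is where condition~\eqref{eq:existence.derivative} enters, guaranteeing that $f'(q^\ast)$ exists.

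Your route via the $s$-integral of $K\bigl(\sum_j e^{-\lambda(t_j-s)}\mathbf 1_{t_j\ge s}\bigr)$ can be made to work, but ``enlarging $u_k$ reduces the overlap of the exponential kernels'' is not a proof: increasing $u_k$ shifts $t_{k+1},\ldots,t_q$ and simultaneously changes both the integrand and the upper limit of integration, and one must show the net effect on the integral is a decrease. This requires a careful change of variables and a genuine convexity/majorization argument, essentially rediscovering what the product formula encodes for free. The paper's recursion is the cleaner device here; if you want to keep your representation, you should at least reduce the monotonicity question to a one-variable statement of the form $\psi(1+s)-\psi(s)$ increasing, as the paper does.
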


\textit{Proof of Theorem~\ref{thm:ou}} We are starting with $\mathcal{L}_{q}$
convergence. To show the convergence we apply Theorem~\ref{Th3.1}. It is
sufficient to show the convergence for $q=q^{\ast }$ since the convergence
for $q<q^{\ast }$ immediately follows from the convergence for $q=q^{\ast }$%
. First we will derive a suitable explicit expression for $\rho
(u_{1},\ldots ,u_{q-1})$. Put $s_{1}=0\leq s_{2}=u_{1}\leq
s_{2}=u_{1}+u_{2},\ldots ,s_{q}=u_{1}+\cdots +u_{q-1}$. Then, 
\begin{equation*}
\rho (u_{1},\ldots ,u_{q-1})=\mathrm{E}\Lambda (s_{1})\ldots \Lambda (s_{q})=%
\mathrm{E}\exp \left\{ X(s_{1})+\ldots +X(s_{q})-qc_{X}\right\} .
\end{equation*}%
Using representation~(\ref{eq:defn.ou.int}) one can obtain 
\begin{equation*}
X(s_{q})=e^{-\lambda
(s_{q}-s_{q-1})}X(s_{q-1})+\int_{(s_{q-1},s_{q}]}e^{-\lambda
(s_{q}-s)}dZ(\lambda s).
\end{equation*}%
Then,using independence of $X(s_{q-1})$ and the integral $%
\int_{(s_{q-1},s_{q}]}e^{-\lambda (s_{q}-s)}dZ(\lambda s)$ we obtain 
\begin{align*}
& \mathrm{E}\exp \left\{ X(s_{1})+\ldots +X(s_{q})\right\} \\
& \hspace{2cm}=\mathrm{E}\exp \left\{ X(s_{1})+\ldots +(1+e^{-\lambda
(s_{q}-s_{q-1})})X(s_{q-1})\right\} \mathrm{E}e^{\int_{(s_{q-1},s_{q}]}e^{-%
\lambda (s_{q}-s)}dZ(\lambda s)} \\
& \hspace{2cm}=\mathrm{E}\exp \left\{ X(s_{1})+\ldots +(1+e^{-\lambda
(s_{q}-s_{q-1})})X(s_{q-1})\right\} \frac{\mathrm{E}e^{X(s_{q})}}{\mathrm{E}%
e^{e^{-\lambda (s_{q}-s_{q-1})}X(0)}} \\
& \hspace{2cm}=\mathrm{E}\exp \left\{ X(s_{1})+\ldots +(1+e^{-\lambda
(s_{q}-s_{q-1})})X(s_{q-1})\right\} \frac{M_{0}(1)}{M_{0}(e^{-\lambda
(s_{q}-s_{q-1})})}.
\end{align*}

Proceeding further by induction we obtain 
\begin{align*}
& \mathrm{E}\exp \left\{ X(s_{1})+\ldots +X(s_{q})\right\} \\
& =M_{0}(1)\frac{M_{0}(1+e^{-\lambda u_{q-1}})M_{0}(1+e^{-\lambda
u_{q-2}}+e^{-\lambda (u_{q-1}+u_{q-2})})\ldots M_{0}(1+e^{-\lambda
u_{1}}+\ldots +e^{-\lambda (u_{1}+\ldots +u_{q-1})})}{M_{0}(e^{-\lambda
u_{q-1}})M_{0}(e^{-\lambda u_{q-2}}+e^{-\lambda (u_{q-1}+u_{q-2}})\ldots
M_{0}(e^{-\lambda u_{1}}+\ldots +e^{-\lambda (u_{1}+\ldots +u_{q-1})})}.
\end{align*}

Hence 
\begin{multline}
\rho (u_{1},\ldots ,u_{q-1})=\frac{M_{0}(1+e^{-\lambda u_{q-1}})}{%
M_{0}(1)M_{0}(e^{-\lambda u_{q-1}})}\frac{M_{0}(1+e^{-\lambda
u_{q-2}}+e^{-\lambda (u_{q-1}+u_{q-2})})}{M_{0}(1)M_{0}(e^{-\lambda
u_{q-2}}+e^{-\lambda (u_{q-1}+u_{q-2})})}\ldots \times
\label{eq:rho.explicit.ou} \\
\times \frac{M_{0}(1+e^{-\lambda u_{1}+\ldots +e^{-\lambda (u_{1}+\ldots
+u_{q-1})}})}{M_{0}(1)M_{0}(e^{-\lambda u_{1}+\ldots +e^{-\lambda
(u_{1}+\ldots +u_{q-1})}})}.
\end{multline}%
This representation allows us to show monotonicity of $\rho (u_{1},\ldots
,u_{q-1}).$ For that we use the following inequality 
\begin{equation}
\frac{M_{0}(1+s)}{M_{0}(s)}\leq \frac{M_{0}(1+t)}{M_{0}(t)}  \label{eq_kar}
\end{equation}%
for $s\leq t$. This inequality follows from the fact that $\ln M_{0}(t)$ is
a convex function and the Karamata majorisation inequality. Hence 
\begin{equation*}
\frac{M_{0}(1+s)}{M_{0}(1)M_{0}(s)}
\end{equation*}%
is monotone increasing in $s$. Since $e^{-\lambda u}$ is monotone decreasing
in $u$ the representation (\ref{eq:rho.explicit.ou}) implies that $\rho
(u_{1},\ldots ,\ldots ,u_{q-1})$ is monotone decreasing in all variables.

Condition (\ref{eq:b.greater}) becomes 
\begin{equation*}
b^{q-1}>\mathrm{E}\Lambda (0)^q=\frac{M_{0}(q)}{M_{0}(1)^{q}},
\end{equation*}%
which is equivalent to (\ref{eq:ou.cond.b}).

To show the finiteness of the series (\ref{eq:sum_rho_finite}) we are going
to use the following statement.

\begin{lemma}
\label{lem_est} For $s\in \lbrack 0,1]$, the following estimate holds 
\begin{equation}
\frac{M_{0}(1+s)}{M_{0}(1)M(s)}\leq \left( \frac{M_{0}(2)}{M_{0}(1)e^{%
\mathrm{E}X(1)}}\right) ^{s}.  \label{eq309}
\end{equation}
\end{lemma}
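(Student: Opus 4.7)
My plan is to rewrite the target inequality so that it becomes the product of two classical inequalities, one from H\"older and one from Jensen. Taking logarithms and setting $\psi(\zeta)=\log M_0(\zeta)$, the claim is
\[
\psi(1+s)-\psi(1)-\psi(s) \le s\bigl(\psi(2)-\psi(1)-\mathrm{E}X(1)\bigr),\qquad s\in[0,1],
\]
and this splits naturally into an upper bound for $\psi(1+s)$ (convexity of $\psi$, i.e.\ H\"older) and a lower bound for $\psi(s)$ (Jensen).

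For the first step I would write $1+s=(1-s)\cdot 1+s\cdot 2$ and apply H\"older's inequality with conjugate exponents $p=1/(1-s)$ and $q=1/s$ (valid for $s\in(0,1)$; the endpoints are trivial) to the factorization $e^{(1+s)X(0)}=e^{(1-s)X(0)}\cdot e^{2sX(0)}$. This yields
\[
M_0(1+s)=\mathrm{E}\bigl[e^{(1-s)X(0)}\,e^{2sX(0)}\bigr]\le M_0(1)^{1-s}M_0(2)^{s}.
\]
For the second step, Jensen's inequality applied to $\mathrm{E}e^{sX(0)}$ gives
\[
M_0(s)\ge e^{s\mathrm{E}X(0)}=e^{s\mathrm{E}X(1)},
\]
using stationarity to identify $\mathrm{E}X(0)=\mathrm{E}X(1)$.

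Combining the two bounds, I would compute
\[
\frac{M_0(1+s)}{M_0(1)M_0(s)}\le \frac{M_0(1)^{1-s}M_0(2)^{s}}{M_0(1)\,M_0(s)}
=\frac{M_0(2)^{s}}{M_0(1)^{s}M_0(s)}
\le \frac{M_0(2)^{s}}{M_0(1)^{s}e^{s\mathrm{E}X(1)}}
=\left(\frac{M_0(2)}{M_0(1)e^{\mathrm{E}X(1)}}\right)^{s},
\]
which is exactly \eqref{eq309}. There is no real obstacle here: both ingredients are standard, and the only point to watch is that the H\"older exponents $p,q\ge 1$ require $s\in[0,1]$, which is precisely the range assumed in the lemma; the endpoints $s=0$ and $s=1$ reduce to equalities (the latter up to the Jensen gap $M_0(1)\ge e^{\mathrm{E}X(1)}$) and can be checked directly.
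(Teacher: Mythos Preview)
Your proof is correct and essentially identical to the paper's. The paper phrases the first step as convexity of $\ln M_0$ (writing $1+s=(1-s)\cdot 1+s\cdot 2$ and using $\ln M_0(1+s)\le(1-s)\ln M_0(1)+s\ln M_0(2)$), which is of course the same as your H\"older argument, and then applies Jensen's inequality for the bound $M_0(s)\ge e^{s\mathrm{E}X(1)}$ exactly as you do.
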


\textit{Proof of Lemma~\ref{lem_est} } Function $\ln M_{0}(t)$ is convex.
Therefore, 
\begin{equation*}
\ln M_{0}(1+s)=\ln M_{0}((1-s)+2s)\leq (1-s)\ln M_{0}(1)+s\ln M_{0}(2).
\end{equation*}%
In addition, by the Jensen inequality, 
\begin{equation*}
M_{0}(s)=\mathrm{E}e^{sX(1)}\geq e^{s\mathrm{E}X(1)}.
\end{equation*}%
Together these inequalities imply, 
\begin{equation*}
\frac{M_{0}(1+s)}{M_{0}(1)M_{0}(s)}\leq \frac{M_{0}(1)^{1-s}M_{0}(2)^{s}}{%
M_{0}(1)e^{s\mathrm{E}X(1)}}=\left( \frac{M_{0}(2)}{M_{0}(1)e^{\mathrm{E}%
X(1)}}\right) ^{s}.
\end{equation*}

\qed

Now, using (\ref{eq:rho.explicit.ou}) and monotone decrease of $%
M_{0}(1+s)/M_{0}(s)$ 
\begin{align}  \label{eq310}
1&\leq \rho (b^{n},\ldots ,b^{n})\leq \left( \frac{M_{0}(1+e^{-\lambda
b^{n}})}{M_{0}(1)M_{0}(e^{-\lambda b^{n}})}\right) ^{q} \\
&\leq C^{qe^{-\lambda b^{n}}}\leq 1+o(1)\ln Cqe^{-\lambda b^{n}},  \notag
\end{align}%
where the former inequality follows from Lemma~\ref{lem_est} with $%
C=M_{0}(2)/(M_{0}(1)e^{\mathrm{E}X(1)}).$ Then, convergence of the series (%
\ref{eq:rho.explicit.ou}) follows from the finiteness of the series $%
\sum_{n=1}^{\infty }e^{-\lambda b^{n}} $.

Finally we need to check the mixing condition (\ref{eq:mixing}). Let $1\leq
i_{1}<i_{2}<\ldots \leq i_{m}$ and $u_{i}=A$ if $i\in \{i_{1},\ldots
,i_{m}\} $ and $0$ otherwise. In this context it is convenient to use (\ref%
{eq:rho.explicit.ou}) in the form 
\begin{equation*}
\rho (u_{1},\ldots ,u_{q-1})=\prod_{j=1}^{q-1}\frac{M_{0}(1+%
\sum_{k=j}^{q-1}e^{-\lambda \sum_{l=j}^{k}u_{l}})}{M_{0}(1)M_{0}(%
\sum_{k=j}^{q-1}e^{-\lambda \sum_{l=j}^{k}u_{l}})}
\end{equation*}%
Then, as $A\rightarrow \infty $, and $i_{0}=0,i_{m+1}=q$ 
\begin{multline}
\lim_{A\rightarrow \infty }\rho (u_{1},\ldots ,u_{q-1})=\prod_{\alpha
=1}^{m}\prod_{j=i_{\alpha }+1}^{i_{\alpha +1}-1}\frac{M_{0}(1+%
\sum_{k=j}^{i_{\alpha +1}-1}1)}{M_{0}(1)M_{0}(\sum_{k=j}^{i_{\alpha +1}-1}1)}
\label{eq:ou.mixing} \\
=\prod_{\alpha =1}^{m}\frac{M_{0}(i_{\alpha +1}-i_{\alpha })}{%
M_{0}(1)^{i_{\alpha +1}-i_{\alpha }}}=\prod_{\alpha =1}^{m}\mathrm{E}\Lambda
(0)^{i_{\alpha +1}-i_{\alpha }}.
\end{multline}%
This proves (\ref{eq:mixing}). Therefore Theorem~\ref{Th3.1} gives $\mathcal{%
L}_{q}$ convergence of $A_{n}(t)$.

To prove the scaling property we are going to use the results of Theorem~\ref%
{thm:scaling}. First using representation (\ref{eq:defn.ou.int}), we have
for any $q$, 
\begin{align*}
\mathrm{E}\Lambda (t)\Lambda (0)^{q-1} &=\mathrm{E}\exp%
\{(q-1)X(0)+X(t)-qc_{X}\} \\
&=\mathrm{E}\exp \{(q-1+e^{-\lambda t})X(0)+\int_{(0,t)}e^{-\lambda
(t-s)}dZ(\lambda s)-qc_{X}\} \\
&=\mathrm{E}\exp \left\{ (q-1+e^{-\lambda t})X(0)-qc_{X}\right\} \frac{%
\mathrm{E}\exp \{e^{-\lambda t}X(0)+\int_{(0,t)}e^{-\lambda (t-s)}dZ(\lambda
s)\}}{\mathrm{E}e^{e^{-\lambda t}X(0)}} \\
&=\mathrm{E}\exp \{(q-1+e^{-\lambda t})X(0)-qc_{X}\}\frac{\mathrm{E}\exp
\{X(t)\}}{\mathrm{E}e^{e^{-\lambda t}X(0)}} \\
&=\frac{M_{0}(q-1+e^{-\lambda t})}{M_{0}(1)^{q-1}M(e^{-\lambda t})}.
\end{align*}
Then, 
\begin{equation*}
\frac{\mathrm{E}\Lambda (t)^{q-1}\Lambda (0)^{q-1}}{\mathrm{E}\Lambda (0)^{q}%
}=\frac{M_{0}(q-1+e^{-\lambda t})M_{0}(1)}{M_{0}(q)M_{0}(e^{-\lambda t})}.
\end{equation*}%
For $q>1$ the latter function is monotone decreasing in $t$, as follows from
the Karamata motorization inequality. Hence, 
\begin{equation}
|\rho _{q}(s)|=\sup_{u\in \lbrack 0,1]}\left( 1-\frac{\mathrm{E}\Lambda
(su)\Lambda (0)^{q-1}}{\mathrm{E}\Lambda (0)^{q}}\right) =1-\frac{%
M_{0}(q-1+e^{-\lambda s})M_{0}(1)}{M_{0}(q)M_{0}(e^{-\lambda s})}.
\label{eq:ou.rhoq1}
\end{equation}%
Function $f(x)=\ln M(x)$ is convex. Condition \eqref{eq:existence.derivative}
ensures that the derivative $f^{\prime }(q)$ exists for $q\leq q^{\ast }$.
Then, for any $x\leq q$, 
\begin{equation*}
f(x)-f(q)\geq (x-q)f^{\prime }(q).
\end{equation*}%
In particular for $x=q-1+e^{-\lambda s}$, 
\begin{equation*}
f(q-1+e^{-\lambda s})-f(q)\geq (-1+e^{-\lambda s})f^{\prime }(q).
\end{equation*}%
In addition, by the Jensen inequality, 
\begin{equation*}
M_{0}(e^{-\lambda s})=\mathrm{E}e^{e^{-\lambda s}X(0)}\leq (\mathrm{E}%
e^{X(0)})^{e^{-\lambda s}}=M_{0}(1)^{e^{-\lambda s}}.
\end{equation*}%
The latter two inequalities give 
\begin{align}
|\rho _{q}(s)| &\leq 1-e^{(-1+e^{-\lambda s})(f^{\prime }(q)-f(1))}
\label{eq:ou.rho.q} \\
&\leq (1-e^{-\lambda s})(f^{\prime }(q)-f(1))\leq \lambda s(f^{\prime
}(q)-f(1)).  \notag
\end{align}%
Then 
\begin{equation*}
0\leq \sum_{n=1}^{\infty }|\rho _{q}(b^{-n})|\leq \lambda (f^{\prime
}(q)-f(1))\sum_{n=1}^{\infty }b^{-n}<\infty .
\end{equation*}%
Since we have already shown that $A(t)\in \mathcal{L}_{q}$ for $q<q^*$, we
can apply Theorem~\ref{thm:scaling}.

\qed  

\section{Superpositions of geometric Ornestein-Uhlenbeck processes}

\label{sec:superposition-geom-ornst}

The correlation structures found in applications may be more complex than
the exponential decreasing autocorrelation of the form (\ref{3.5}).
Barndorff-Nielsen (1998) (see also Barndorff-Nielsen and Sheppard (2001))
proposed to consider the following class of autocovariance functions:%
\begin{equation}
R_{m}(t)=\sum_{j=1}^{m}\sigma _{j}^{2}\exp \left\{ -\lambda _{j}\left\vert
t\right\vert \right\} ,  \label{3.11}
\end{equation}%
which is flexible and can be fitted to many autocovariance functions arising
in applications. The role of an integer $m\geq 1$ is discussed in
Barndorff-Nielsen and Sheppard (2001).

In order to obtain models with dependence structure (\ref{3.11}) and given
marginal density with finite variance, we consider stochastic processes
defined by%
\begin{equation*}
dX_{j}\left( t\right) =-\lambda _{j}X_{j}\left( t\right) dt+dZ_{j}\left(
\lambda _{j}t\right) ,~j=1,2,...,m,...
\end{equation*}%
and their finite superposition%
\begin{equation}
X_{m\sup }(t)=X_{1}(t)+...+X_{m}(t),~t\geq 0,  \label{3.12}
\end{equation}%
where $Z_{j},~j=1,2,...,m,...$ are mutually independent L\'{e}vy processes.
Then the solution $X_{j}=\left\{ X_{j}(t),t\geq 0\right\} ,~j=1,2,...,m,$ is
a stationary process. Its correlation function is of the exponential form
(assuming finite variance of the components).

The superposition (\ref{3.12}) has its marginal density given by that of the
random variable 
\begin{equation}
X_{m\sup }(0)=X_{1}(0)+...+X_{m}(0),  \label{3.13}
\end{equation}%
and autocovariance function (\ref{3.11}). One can generalize Theorem~\ref%
{thm:ou} to the case of finite superposition process (\ref{3.12}).

We are interested in the case when the distribution of (\ref{3.13}) is
tractable, for instance when $X_{m}(0)$ belongs to the same class as $%
X_{j}(0),j=1,...,m$ (see the examples in Sections 8--13 below). We denote
the class of stochastic processes (\ref{3.11}) of finite superpositions with
marginal law $D$ by 
\begin{equation}
\mathbb{FS}_{m}\{D;\mathrm{E}X_{j}(t);\mathrm{Var}X_{j}(t)\}.  \label{FS}
\end{equation}

Define the mother process as the geometric process 
\begin{equation*}
\Lambda (t)=e^{X_{m\sup }(t)-c_{X}},c_{X}=\log \mathrm{E}e^{X_{m\sup
}(0)},M(\zeta )=\mathrm{E}e^{\zeta (X_{m\sup }(t)-c_{X})}, M_0(\zeta)=%
\mathrm{E}e^{\zeta X_{m\sup }(t)},
\end{equation*}%
where $X_{m\sup }(t),t\in \mathbb{R}_{+}$, is the finite superposition
process (\ref{3.12}). Note that 
\begin{equation*}
\log \mathrm{E}\exp \{\zeta _{1}X_{m\sup }(t_{1})+\zeta _{2}X_{m\sup
}(t_{2})\}=\sum_{j=1}^{m}\log \mathrm{E}\exp \{\zeta _{1}X_{j}(t_{1})+\zeta
_{2}X_{j}(t_{2})\},
\end{equation*}%
where $\log \mathrm{E}\exp \{\zeta _{1}X_{j}(t_{1})+\zeta
_{2}X_{j}(t_{2})\},j=1,..,m$ are given by (\ref{eq39}).

Denote 
\begin{equation}
M(\zeta _{1},\zeta _{2};t_{1}-t_{2})=\exp \left\{ -c_{X}(\zeta _{1}+\zeta
_{2})\right\} \mathrm{E}\exp \{\zeta _{1}X_{m\sup }(t_{1})+\zeta
_{2}X_{m\sup }(t_{2})\}  \label{3.13a}
\end{equation}

We can formulate the following theorem which can be proved similar to
Theorem~\ref{thm:ou}.

\begin{theorem}
\label{thm:ou1} Let $X_{m\sup }(t),t\in \mathbb{R}_{+}$ be a finite
superposition of OU-type stationary processes (\ref{3.12}) such that the L%
\'{e}vy measure $\nu $ in (\ref{3.1}) of the random variable $X_{m}(t)$
satisfies the condition that for a positive integer $q^{\ast }\in \mathbf{N}%
, $ 
\begin{equation}
\int_{\left\vert x\right\vert \geq 1}xe^{q^{\ast }x}\nu (dx)<\infty .
\label{F1}
\end{equation}%
Then, for any fixed $b$ such that 
\begin{equation}
b>\left\{ \frac{M_{0}(q^{\ast })}{M_{0}(1)^{q^{\ast }}}\right\} ^{\frac{1}{%
q^{\ast }-1}},  \label{F2}
\end{equation}%
stochastic processes 
\begin{equation*}
A_{n}(t)=\int_{0}^{t}\prod_{j=0}^{n}\Lambda ^{(j)}\left( sb^{j}\right) ds,
\end{equation*}%
converge in $\mathcal{L}_{q}$ to the stochastic process $A(t)\in \mathcal{L}%
_{q},$ as $n\rightarrow \infty $. The limiting process $A(t)$ satisfies 
\begin{equation*}
\mathrm{E}A^{q}(t)\sim t^{q-\log _{b}\mathrm{E}\Lambda ^{q}\left( t\right)
},\quad q\in \lbrack 0,q^{\ast }],t\in \lbrack 0,1].
\end{equation*}%
The scaling function is given by 
\begin{equation}
\varsigma (q)=q-\log _{b}\mathrm{E}\Lambda ^{q}\left( t\right) =q(1+\frac{%
c_{X}}{\log b})-\log _{b}M_{0}(q),\quad q\in \lbrack 0,q^{\ast }].
\label{F3}
\end{equation}%
In addition, 
\begin{equation}
\mathrm{Var}A(t)\geqslant 2t\int_{0}^{t}\left( 1-\frac{u}{t}\right) M(\zeta
_{1},\zeta _{2};u)du,  \label{F4}
\end{equation}

where the bivariate moment generating function $M(\zeta _{1},\zeta
_{2};t_{1}-t_{2})$ is given by (\ref{3.13a})
\end{theorem}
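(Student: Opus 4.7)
The plan is to mirror the proof of Theorem~\ref{thm:ou}, exploiting the fact that the superposition structure makes everything factorize over the $m$ independent components. Write $\Lambda(t)=\prod_{j=1}^{m}\Lambda_j(t)$ with $\Lambda_j(t)=e^{X_j(t)-c_{X_j}}$ where $c_{X_j}=\log \mathrm{E}e^{X_j(0)}$, so that $c_X=\sum_j c_{X_j}$ and $M_0(\zeta)=\prod_{j=1}^m M_{0,j}(\zeta)$. Because the $X_j$ are independent, the function $\rho$ defined in (\ref{RO}) factors as
\begin{equation*}
\rho(u_1,\ldots,u_{q-1})=\prod_{j=1}^m \rho_j(u_1,\ldots,u_{q-1}),
\end{equation*}
where each $\rho_j$ admits the explicit product representation (\ref{eq:rho.explicit.ou}) with $\lambda,M_0$ replaced by $\lambda_j,M_{0,j}$. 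This reduces every structural hypothesis of Theorem~\ref{Th3.1} to the corresponding statement for each factor, which was already established in the proof of Theorem~\ref{thm:ou}.

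Specifically, the first step is to verify $\mathcal{L}_q$ convergence via Theorem~\ref{Th3.1}. Monotone decrease of each $\rho_j$ in all variables (via Karamata applied to the convex function $\log M_{0,j}$, exactly as in (\ref{eq_kar})) implies monotone decrease of their product $\rho$. Condition (\ref{eq:b.greater}) becomes $b^{q-1}>M_0(q)/M_0(1)^q$, which is precisely (\ref{F2}). For the summability condition (\ref{eq:sum_rho_finite}), Lemma~\ref{lem_est} applied to each factor gives $\rho_j(b^n,\ldots,b^n)-1=O(e^{-\lambda_j b^n})$ as in (\ref{eq310}), and a standard inequality of the form $\prod_j(1+a_j)-1\le\sum_j a_j\,\exp(\sum_j a_j)$ (valid for small $a_j\ge 0$) yields $\rho(b^n,\ldots,b^n)-1=O(\sum_j e^{-\lambda_j b^n})$, whose sum in $n$ is finite. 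The mixing condition (\ref{eq:mixing}) follows factor-by-factor from the computation (\ref{eq:ou.mixing}), and the product of the limits equals $\prod_{\alpha=1}^m M_0(i_{\alpha+1}-i_\alpha)/M_0(1)^{i_{\alpha+1}-i_\alpha}=\prod_{\alpha=1}^m\mathrm{E}\Lambda(0)^{i_{\alpha+1}-i_\alpha}$, which is exactly the right-hand side required by (\ref{eq:mixing}) for the superposition. This establishes $A_n(t)\to A(t)$ in $\mathcal{L}_q$ for $q\in[0,q^\ast]$, together with (\ref{eq:lq.finite}).

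For the scaling property, apply Theorem~\ref{thm:scaling}. The same calculation as in the proof of Theorem~\ref{thm:ou} (using independence of the $X_j$ and the integral representation (\ref{eq:defn.ou.int}) for each) gives
\begin{equation*}
\frac{\mathrm{E}\Lambda(t)\Lambda(0)^{q-1}}{\mathrm{E}\Lambda(0)^q}=\prod_{j=1}^m\frac{M_{0,j}(q-1+e^{-\lambda_j t})M_{0,j}(1)}{M_{0,j}(q)M_{0,j}(e^{-\lambda_j t})},
\end{equation*}
and each factor is monotone decreasing in $t$ by Karamata. Applying the convexity bound for $f_j(x)=\log M_{0,j}(x)$ (whose derivative at $q$ is finite thanks to (\ref{F1})) together with Jensen, exactly as in (\ref{eq:ou.rho.q}), bounds each factor's contribution by $\lambda_j s(f_j'(q)-f_j(1))$, so
\begin{equation*}
|\rho_q(s)|\le s\sum_{j=1}^m\lambda_j\bigl(f_j'(q)-f_j(1)\bigr),
\end{equation*}
which makes $\sum_n|\rho_q(b^{-n})|$ a geometric series and verifies (\ref{eq:ass_scaling}). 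Theorem~\ref{thm:scaling} then yields the scaling $\mathrm{E}A(t)^q\sim t^{q-\log_b M_0(q)+qc_X/\log b}$, which rearranges to the stated R\'enyi function (\ref{F3}). Finally, (\ref{F4}) follows from (\ref{2.8}) in Theorem~\ref{Th2.1}, since $\mathrm{Corr}(\Lambda(0),\Lambda(\tau))=M(1,1;\tau)-1$ under the normalization $M(1)=1$, and $M(\zeta_1,\zeta_2;\tau)$ in (\ref{3.13a}) is positive.

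The only nontrivial obstacle is making sure the Karamata-type product bounds pushed through Theorem~\ref{Th3.1}'s proof still produce a clean estimate for $\rho$ rather than for a single factor; but because monotonicity, the mixing limit, and the summability bound all behave multiplicatively under independence, no new idea beyond book-keeping is required.
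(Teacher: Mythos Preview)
Your proposal is correct and follows essentially the same approach the paper intends: the paper states that Theorem~\ref{thm:ou1} ``can be proved similar to Theorem~\ref{thm:ou}'' without further detail, and your factorization $\rho=\prod_{j=1}^m\rho_j$ over the independent components is exactly the device the paper spells out explicitly a few paragraphs later when treating the infinite superposition (cf.\ (\ref{eq:repr.rho.infinite.super}) and the surrounding argument). The verification of monotonicity, mixing, the summability condition (\ref{eq:sum_rho_finite}), and the scaling bound (\ref{eq:ass_scaling}) factor-by-factor is precisely what the paper does in that harder case, so for the finite $m$ there is nothing beyond the book-keeping you describe.
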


We are interested in generalization of the above result to the case of
infinite superposition of OU-type processes which has a long-range
dependence property.

Note that an infinite superposition $(m\rightarrow \infty )$ gives a
complete monotone class of covariance functions%
\begin{equation*}
R_{\sup }(t)=\int_{0}^{\infty }e^{-tu}dU(u),t\geq 0,
\end{equation*}%
for some finite measure $U,$ which display long-range dependence (see
Barndorff-Nielsen 1998, 2001, Barndorff-Nielsen and Leonenko 2005 for
possible covariance structures and spectral densities and Barndorff-Nielsen
et al. 2011 for multivariate generalizations).

We are going to consider an infinite superposition of the OU processes,
which corresponds to $m\rightarrow \infty $, that is now 
\begin{equation}
X_{\sup }(t)=\sum_{j=1}^{\infty }X_{j}(t)\text{,}  \label{sup1}
\end{equation}%
assuming that 
\begin{equation}
\sum_{j=1}^{\infty }\mathrm{E}X_{j}(t)<\infty ,\sum_{j=1}^{\infty }\mathrm{%
Var}X_{j}(t)<\infty ,  \label{sup2}
\end{equation}%
In this case 
\begin{equation}
R_{\sup }(t)=\sum_{j=1}^{\infty }\sigma _{j}^{2}\exp \left\{ -\lambda
_{j}\left\vert t\right\vert \right\} \text{,}  \label{sup3}
\end{equation}%
and if we assume that for some $\delta _{j}>0$ 
\begin{equation}
\mathrm{E}X_{j}(t)=\delta _{j}C_{1},\mathrm{Var}X_{j}(t)=\sigma
_{j}^{2}=\delta _{j}C_{2},\delta _{j}=j^{-(1+2(1-H))},\frac{1}{2}<H<1,
\label{sup01}
\end{equation}%
where the constants $C_{1}\in \mathbb{R}$ and $C_{2}>0$ represent some other
possible parameters (see examples in the Sections 8--13 below), then 
\begin{equation}
\mathrm{E}X_{\sup }(t)=C_{1}\sum_{j=1}^{\infty }\delta _{j}=C_{1}\zeta
(1+2(1-H))<\infty ,  \label{sup02}
\end{equation}%
where $\zeta (s)=\sum\limits_{n=1}^{\infty }\frac{1}{n^{s}},$ $\mathrm{Re}%
s>1,$ is the Riemann zeta-function, and with $\lambda _{j}=\lambda /j,$ we
have 
\begin{equation}
R_{\sup }(t)=\sum_{j=1}^{\infty }\sigma _{j}^{2}\exp \left\{ -\lambda
_{j}\left\vert t\right\vert \right\} =C_{2}\sum_{j=1}^{\infty }\delta
_{j}\exp \left\{ -\lambda \left\vert t\right\vert /j\right\} =\frac{%
L_{2}(\left\vert t\right\vert )}{\left\vert t\right\vert ^{2(1-H)}},\frac{1}{%
2}<H<1,  \label{sup03}
\end{equation}%
where $L_{2}$ is a slowly varying at infinity function, bounded on every
bounded interval. Thus we obtain a long range dependence property: 
\begin{equation*}
\int_{\mathbb{R}}R_{\sup }(t)dt=\infty .
\end{equation*}

We denote the class of stochastic processes (\ref{sup1}) of infinite
superpositions with marginal law $D$ as 
\begin{equation}
\mathbb{IS}\{D;\mathrm{E}X_{j}(t);\mathrm{Var}X_{j}(t)\}.  \label{IS}
\end{equation}

We are going to make an additional assumption that there exists parameters $%
\delta _{j}$ such that 
\begin{equation}
\mathrm{E}e^{\zeta X_{j}(0)}=\mathrm{E}e^{\zeta \bar{\delta}_{j}Y}
\label{eq:exp.superposition}
\end{equation}%
for some random variable $Y$. The sum 
\begin{equation}
\sum_{j=1}^{\infty }\bar{\delta}_{j}<\infty  \label{sup4}
\end{equation}%
must be finite. When we specialize (\ref{sup3}) to this situation we obtain 
\begin{equation}
R_{\sup }(t)=C_{2}\sum_{j=1}^{\infty }\bar{\delta}_{j}\exp \left\{ -\lambda
_{j}\left\vert t\right\vert \right\} ,  \label{eq:corr.inf.superposition}
\end{equation}%
for some $C_{2}>0.$ This approach allows also to treat the case of several
parameters.

We are starting our considerations with $\mathcal{L}_{q}$ convergence. Let 
\begin{equation*}
\rho _{j}(u_{1},\ldots ,u_{q-1})=\mathrm{E}%
e^{X_{j}(0)+X_{j}(u_{1})+X_{j}(u_{1}+u_{2})+\cdots +X_{j}(u_{1}+\cdots
+u_{q-1})}
\end{equation*}%
correspond to the process $X_{j}(t)$. Then, since $X_{j}(\cdot )$ are
independent of each other, 
\begin{equation}
\rho (u_{1},\ldots ,u_{q-1})=\prod_{j=1}^{\infty }\rho _{j}(u_{1},\ldots
,u_{q-1}).  \label{eq:repr.rho.infinite.super}
\end{equation}%
We have shown above that $\rho _{j}(u_{1},\ldots ,u_{q-1})$ is monotone
decreasing in $u_{1},\ldots ,u_{q-1}$. Therefore $\rho (u_{1},\ldots
,u_{q-1})$, being a product of monotone decreasing functions is monotone
decreasing itself. Next we prove finiteness of the series. For that recall
estimate (\ref{eq310}) 
\begin{equation*}
1\leq \rho _{j}(b^{n},\ldots ,b^{n})\leq \left( \frac{M_{0}(1+e^{-\lambda
_{j}b^{n}})}{M_{0}(1)M_{0}(e^{-\lambda _{j}b^{n}})}\right) ^{q}\leq \left( 
\frac{\mathrm{E}e^{2X_{j}(0)}}{\mathrm{E}e^{X_{j}(0)}e^{\mathrm{E}X_{j}(0)}}%
\right) ^{qe^{-\lambda _{j}b^{n}}}=C^{q\delta _{j}e^{-\lambda _{j}b^{n}}}
\end{equation*}%
where $C=\frac{\mathrm{E}e^{2Y}}{\mathrm{E}e^{Y}e^{\mathrm{E}Y}},$ and we
denote 
\begin{equation*}
M_{0j}(\zeta )=\mathrm{E}e^{\zeta X_{j}(t)}.
\end{equation*}

Then, using (\ref{eq:corr.inf.superposition}), we obtain 
\begin{equation*}
1\leq \rho (b^{n},\ldots ,b^{n})\leq C^{q\sum_{j=1}^{\infty }\delta
_{j}e^{-\lambda _{j}b^{n}}}=C^{\frac{q}{\sigma ^{2}}R_{sup}(b^{n})}\leq
1+o(1)\frac{q}{\sigma ^{2}}\ln CR_{sup}(b^{n}).
\end{equation*}%
Then $\sum_{n=1}^{\infty }\rho (b^{n},\ldots ,b^{n})$ is finite if the sum $%
\sum_{n=1}^{\infty }R_{sup}(b^{n})$ is finite.

We are left to check the mixing condition (\ref{eq:mixing}). But this
condition follows from the fact that it holds for $\rho _{j}$,
representation (\ref{eq:repr.rho.infinite.super}) and monotonicity of $\rho
_{j}$. Indeed let $1\leq i_{1}<i_{2}<\ldots \leq i_{m}$ and put $\mathbf{%
\delta }_{i_{1},\ldots ,i_{m}}(A)=(u_{1},\ldots ,u_{q-1})$, where $u_{i}=A$
if $i\in \{i_{1},\ldots ,i_{m}\}$ and $0$ otherwise. Let $N$ be a number
which we let tend to $\infty $ later. Then, for fixed $N$, 
\begin{equation*}
\prod_{j=1}^{N}\rho _{j}(\mathbf{\delta }_{i_{1},\ldots
,i_{m}}(A))\rightarrow \prod_{j=1}^{N}\mathrm{E}e^{i_{1}X_{j}(0)}\mathrm{E}%
e^{(i_{2}-i_{1})X_{j}(0)}\ldots \mathrm{E}e^{(q-i_{m})X_{j}(0)}
\end{equation*}%
by the corresponding property of the geometric Ornestein-Uhlenbeck process,
see (\ref{eq:ou.mixing}). The product, 
\begin{equation*}
1\leq \prod_{j=N}^{\infty }\rho _{j}(\mathbf{\delta }_{i_{1},\ldots
,i_{m}}(A))\leq \prod_{j=N}^{\infty }\rho _{j}(\mathbf{\delta }%
_{i_{1},\ldots ,i_{m}}(1))\rightarrow 1,
\end{equation*}%
as $N\rightarrow \infty $, uniformly in $A>1$. Therefore, the mixing
property holds.

Now we turn to proving the scaling property. For that we use Theorem~\ref%
{thm:scaling}. Let $M_{0}(\zeta)=\mathrm{E}e^{\zeta Y}$ be the moment
generating function of $Y$ and $f(\zeta)=\ln M_{0}(\zeta).$ Then, similarly
to \eqref{eq:ou.rhoq1}, 
\begin{multline*}
|\rho _{q}(s)|=1-\prod_{j=1}^{\infty }\left( \frac{M_{0}(q-1+e^{-\lambda
_{j}s})M_{0}(1)}{M_{0}(q)M_{0}(e^{-\lambda _{j}s})}\right) ^{\delta
_{j}}\leq 1-\prod_{j=1}^{\infty }e^{(-1+e^{-\lambda _{j}s})(f^{\prime
}(q)-f(1))\delta _{j}} \\
\leq 1-\prod_{j=1}^{\infty }e^{-\lambda _{j}s(f^{\prime }(q)-f(1))\delta
_{j}}\leq s(f^{\prime }(q)-f(1))\sum_{j=1}^{\infty }\lambda _{j}\delta _{j}.
\end{multline*}%
The convergence of the series immediately follows from this estimate and we
can apply \ Theorem~\ref{thm:scaling}.

Note that 
\begin{equation*}
\log \mathrm{E}\exp \{\zeta _{1}X_{\sup }(t_{1})+\zeta _{2}X_{\sup
}(t_{2})\}=\sum_{j=1}^{\infty }\log \mathrm{E}\exp \{\zeta
_{1}X_{j}(t_{1})+\zeta _{2}X_{j}(t_{2})\},
\end{equation*}%
where $\log \mathrm{E}\exp \{\zeta _{1}X_{j}(t_{1})+\zeta
_{2}X_{j}(t_{2})\},j=1,2,..$ are given by (\ref{eq39}).

Define the mother process as the geometric process 
\begin{equation*}
\Lambda (t)=e^{X_{\sup }(t)-c_{X}},c_{X}=\log \mathrm{E}e^{X_{\sup
}(0)},M(\zeta)=\mathrm{E}e^{\zeta(X_{\sup }(0)-c_{X})},
\end{equation*}%
where $X_{\sup }(t),t\in \mathbb{R}$, is the infinite superposition process (%
\ref{sup1}).

Denote 
\begin{equation}
M(\zeta _{1},\zeta _{2};t_{1}-t_{2})=\exp \left\{ -c_{X}(\zeta _{1}+\zeta
_{2})\right\} \mathrm{E}\exp \{\zeta _{1}X_{\sup }(t_{1})+\zeta _{2}X_{\sup
}(t_{2})\}.  \label{sup100}
\end{equation}%
We arrive to the following result.

\begin{theorem}
\label{thm:ou2} Let $X_{\sup }(t),t\in \mathbb{R}_{+}$ be an infinite
superposition of OU-type stationary processes (\ref{sup1}) such that (\ref%
{sup2}),(\ref{sup01}),(\ref{eq:exp.superposition}) are satisfied as well as (%
\ref{sup4}). Assume that the L\'{e}vy measure $\nu $ in (\ref{3.1}) of the
random variable $X_{\sup }(t)$ satisfies the condition that for a positive
integer $q^{\ast }\in \mathbf{N},$ 
\begin{equation}
\int_{\left\vert x\right\vert \geq 1}xe^{q^{\ast }x}\nu (dx)<\infty .
\label{sup7}
\end{equation}%
Then, for any fixed $b$ such that 
\begin{equation}
b>\left\{ \frac{M(q^{\ast })}{M(1)^{q^{\ast }}}\right\} ^{\frac{1}{q^{\ast
}-1}},  \label{sup8}
\end{equation}%
stochastic processes 
\begin{equation*}
A_{n}(t)=\int_{0}^{t}\prod_{j=0}^{n}\Lambda ^{(j)}\left( sb^{j}\right)
ds,t\in \lbrack 0,1]
\end{equation*}%
converge in $\mathcal{L}_{q}$ to the stochastic process $A(t)\in \mathcal{L}%
_{q},t\in \lbrack 0,1],$ as $n\rightarrow \infty $. The limiting process $%
A(t)$ satisfies 
\begin{equation*}
\mathrm{E}A^{q}(t)\sim t^{\zeta (q)},\quad q\in \lbrack 0,q^{\ast }].
\end{equation*}%
The scaling function is given by 
\begin{equation}
\varsigma (q)=q-\log _{b}\mathrm{E}\Lambda ^{q}\left( t\right) ,\quad q\in
\lbrack 0,q^{\ast }],t\in \lbrack 0,1].  \label{sup9}
\end{equation}%
In addition, 
\begin{equation}
\mathrm{Var}A(t)\geqslant \int_{0}^{t}\int_{0}^{t}M(\zeta _{1},\zeta
_{2};t_{1}-t_{2})dtds,  \label{sup10}
\end{equation}%
where the bivariate moment generating functions $M(\zeta _{1},\zeta
_{2};t_{1}-t_{2})$ is given by (\ref{sup100}).
\end{theorem}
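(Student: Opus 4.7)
The plan is to reduce this statement to the two master theorems already established, namely Theorem~\ref{Th3.1} for the $\mathcal{L}_{q}$ convergence and Theorem~\ref{thm:scaling} for the scaling of moments; the variance bound and the R\'{e}nyi function then follow from (\ref{2.8}) and (\ref{2.7}) of Theorem~\ref{Th2.1}. The key structural fact is that independence of the components $X_{j}(t)$ turns $\rho$ into an infinite product:
\begin{equation*}
\rho(u_{1},\ldots,u_{q-1})=\prod_{j=1}^{\infty}\rho_{j}(u_{1},\ldots,u_{q-1}),
\end{equation*}
where each $\rho_{j}$ is the analogous correlation built from a single OU process $X_{j}$. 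Monotonicity of $\rho$ in each argument is then inherited from the monotonicity of each $\rho_{j}$, which was proved in Theorem~\ref{thm:ou} via the Karamata-type inequality (\ref{eq_kar}) applied to $\ln M_{0}$.

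To verify the summability condition (\ref{eq:sum_rho_finite}) of Theorem~\ref{Th3.1}, I would apply the single-factor estimate (\ref{eq310}) component-wise, obtaining
\begin{equation*}
\rho_{j}(b^{n},\ldots,b^{n})\le C^{q\delta_{j}e^{-\lambda_{j}b^{n}}},
\end{equation*}
so that by (\ref{eq:corr.inf.superposition})
\begin{equation*}
\rho(b^{n},\ldots,b^{n})-1\le (1+o(1))\,\tfrac{q}{C_{2}}\log C\cdot R_{\sup}(b^{n}),
\end{equation*}
and the hypothesis (\ref{sup01})--(\ref{sup03}) on $\delta_{j}$ and $\lambda_{j}$ guarantees $\sum_{n}R_{\sup}(b^{n})<\infty$. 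The condition (\ref{eq:b.greater}) translates directly into (\ref{sup8}). For the mixing condition (\ref{eq:mixing}), truncate at a level $N$: the finite product $\prod_{j=1}^{N}\rho_{j}$ factorises in the limit $A\to\infty$ using (\ref{eq:ou.mixing}) from the single-process case, while the tail $\prod_{j>N}\rho_{j}$ is sandwiched between $1$ and $\prod_{j>N}\rho_{j}(\mathbf{\delta}_{i_{1},\ldots,i_{m}}(1))$ uniformly in $A\geq 1$ by monotonicity, and the latter tends to $1$ as $N\to\infty$ by the same series argument. This yields $\mathcal{L}_{q}$ convergence and hence $A(t)\in\mathcal{L}_{q}$.

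For the scaling (\ref{eq:mf.bounds}), I would verify the hypothesis (\ref{eq:ass_scaling}) of Theorem~\ref{thm:scaling} via the explicit product representation of $\rho_{q}$: writing $f(\zeta)=\ln M_{0}(\zeta)$ with $M_{0}$ the moment generating function of the generic $Y$ in (\ref{eq:exp.superposition}), independence gives
\begin{equation*}
|\rho_{q}(s)|\le 1-\prod_{j=1}^{\infty}e^{(-1+e^{-\lambda_{j}s})(f'(q)-f(1))\delta_{j}}\le s\,(f'(q)-f(1))\sum_{j=1}^{\infty}\lambda_{j}\delta_{j},
\end{equation*}
after combining the convexity bound $f(q-1+x)-f(q)\ge (x-1)f'(q)$ with Jensen's inequality $M_{0}(e^{-\lambda_{j}s})\le M_{0}(1)^{e^{-\lambda_{j}s}}$, exactly as in the final step of the proof of Theorem~\ref{thm:ou}. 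The finiteness of the derivative $f'(q)$ at $q=q^{\ast}$ is ensured by the L\'{e}vy measure assumption (\ref{sup7}), and $\sum_{j}\lambda_{j}\delta_{j}<\infty$ follows from the prescribed decay $\delta_{j}=j^{-(1+2(1-H))}$, $\lambda_{j}=\lambda/j$. Hence $\sum_{n}|\rho_{q}(b^{-n})|\le(f'(q)-f(1))\sum_{j}\lambda_{j}\delta_{j}\sum_{n}b^{-n}<\infty$, and Theorem~\ref{thm:scaling} delivers (\ref{eq:mf.bounds}); the form (\ref{sup9}) of the R\'{e}nyi function and the variance lower bound (\ref{sup10}) then follow from (\ref{2.7}) and (\ref{2.8}).

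The main technical obstacle is the verification of the mixing condition (\ref{eq:mixing}) for the \emph{infinite} product of $\rho_{j}$'s: the order of the limits $A\to\infty$ and $N\to\infty$ has to be handled carefully, and the uniform-in-$A$ tail estimate from monotonicity is the step where the assumption (\ref{sup4}) is genuinely used. Everything else amounts to transferring single-factor arguments through the product and invoking the summability of $R_{\sup}$ and of $\sum_{j}\lambda_{j}\delta_{j}$.
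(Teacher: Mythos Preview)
Your proposal is correct and follows essentially the same route as the paper: factorising $\rho$ as $\prod_{j}\rho_{j}$, inheriting monotonicity, bounding $\rho(b^{n},\ldots,b^{n})-1$ via the component estimate (\ref{eq310}) and $R_{\sup}(b^{n})$, handling mixing by truncation at $N$ with a monotonicity-based uniform tail bound, and then verifying (\ref{eq:ass_scaling}) through the same convexity/Jensen chain ending in $s(f'(q)-f(1))\sum_{j}\lambda_{j}\delta_{j}$. The paper's argument is laid out in the discussion immediately preceding the theorem statement, and your sketch matches it step for step.
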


\section{Log-tempered stable scenario}

This section introduces a scenario which generalize the log-inverse Gaussian
scenario obtained in Anh, Leonenko and Shieh (2008a, 2010). Note that the
tempered stable distribution (up to constants) arises in the theory of
Vershik-Yor subordinator (see Donati-Martin and Yor 2006, and the references
therein). This section constructs a multifractal process based on the
geometric tempered stable OU process. In this case, the mother process takes
the form $\Lambda (t)=\exp \left\{ X\left( t\right) -c_{X}\right\} ,$ where $%
X\left( t\right) ,t\geq 0$ is a stationary OU type process (\ref%
{eq:defn.ou.int}) with tempered stable marginal distribution and $c_{X}$ is
a constant depending on the parameters of its marginal distribution. This
form is needed for the condition $\mathrm{E}\Lambda \left( t\right) =1$ to
hold. The log-tempered stable scenario appeared in Novikov (1994) in a
physical setting and in Anh \emph{et al.} (2001) in a gemonic setting under
different terminology. So, we present here a rigorous proofs regarded these
scenarios. Some applications of the log-tempered stable scenario and other
related multifractal scenarios considered below in a subordinated models for
currency exchange rates can be found in Leonenko \emph{et al.} (2013).

We consider the stationary OU process whose marginal distribution is the
tempered stable distribution $TS(\kappa ,\delta ,\gamma )$ (see, for
example, Barndorff-Nielsen and Shephard 2002, Terdik and Woyczynski, 2004).
This distribution is the exponentially tilted version of the positive $%
\kappa $-stable law $S(\kappa ,\delta )$ whose cumulant transform is of the
form: $-\delta (2\zeta )^{\kappa },\zeta >0,\kappa \in (0,1),\delta >0.$We
denote its probability density function (pdf) as $s_{\kappa ,\delta
}(x),x>0. $ The pdf of the tempered stable distribution $TS(\kappa ,\delta
,\gamma )$ is%
\begin{equation}
\pi \left( x\right) =\pi (x;\kappa ,\delta ,\gamma )=e^{\delta \gamma
}s_{\kappa ,\delta }(x)e^{-\frac{x}{2}\gamma ^{1/\kappa }},x>0,\kappa \in
(0,1),\delta >0,\gamma >0.  \label{4.1a}
\end{equation}%
It is clear that $TS(\frac{1}{2},\delta ,\gamma )=IG\left( \delta ,\gamma
\right) ,$ the inverse Gaussian distribution with pdf%
\begin{equation*}
\pi \left( x\right) =\frac{1}{\sqrt{2\pi }}\frac{\delta e^{\delta \gamma }}{%
x^{3/2}}\exp \left\{ -\left( \frac{\delta ^{2}}{x}+\gamma ^{2}x\right) \frac{%
1}{2}\right\} \mathbf{1}_{[0,\infty )}(x),\delta >0,\gamma \geq 0.
\end{equation*}%
In general the pdf of ~tempered stable distribution is given in form of a
series representation (see, i.e., Anh et al. 2010).

The cumulant transform of a random variable $X\sim TS(\kappa ,\delta ,\gamma
)$ is of the form.%
\begin{equation}
\log \mathrm{E}e^{\zeta X}=\delta \gamma -\delta \left( \gamma ^{\frac{1}{%
\kappa }}-2\zeta \right) ^{\kappa },0<\zeta <\frac{\gamma ^{1/\kappa }}{2}.
\end{equation}%
Note that 
\begin{equation*}
\mathrm{E}X(t)=2\kappa \delta \gamma ^{\frac{\kappa -1}{\kappa }},\mathrm{Var%
}X(t)=4\kappa \left( 1-\kappa \right) \delta \gamma ^{\frac{\kappa -2}{%
\kappa }}.
\end{equation*}%
We will consider a stationary OU type process (\ref{eq:defn.ou}) with
marginal distribution $TS(\kappa ,\delta ,\gamma )$. This distribution is
self-decomposable (and hence infinitely divisible) with the L\'{e}vy triplet 
$(a,0,\nu ),$ where%
\begin{equation*}
\nu (du)=b(u)du,b(u)=2^{\kappa }\delta \frac{\kappa }{\Gamma \left( 1-\kappa
\right) }u^{-1-\kappa }e^{-\frac{u\gamma ^{1/\kappa }}{2}},u>0.
\end{equation*}%
The BDLP $Z(t)$ in (\ref{eq:defn.ou}) has a L\'{e}vy triplet ($\tilde{a},0,%
\tilde{\nu}),$\ with%
\begin{align}
\tilde{\nu}(du)& =\lambda \omega (u)du,  \label{4.40} \\
\omega (u)& =2^{\kappa }\delta \frac{\kappa }{\Gamma \left( 1-\kappa \right) 
}\left( \frac{\kappa }{u}+\frac{\gamma ^{1/\kappa }}{2}\right) u^{-\kappa
}e^{-\frac{u\gamma ^{1/\kappa }}{2}},u>0.  \label{4.41}
\end{align}

Consider a mother process of the form 
\begin{equation*}
\Lambda (t)=\exp \left\{ X\left( t\right) -c_{X}\right\}
\end{equation*}%
with%
\begin{equation}
c_{X}=\left[ \delta \gamma -\delta \left( \gamma ^{\frac{1}{\kappa }%
}-2\right) ^{\kappa }\right] ,\gamma >2^{\kappa },  \label{KO}
\end{equation}%
where $X\left( t\right) $ is OU processes with $TS(\kappa ,\delta ,\gamma )$
marginal distribution and correlation function $R_{X}\left( t\right) =\exp
\left\{ -\lambda \left\vert t\right\vert \right\} .$

The correlation function of the mother process takes the form 
\begin{equation}
\rho (\tau )=\frac{M(1,1;\tau )-1}{M(2)-1},\gamma >4^{\kappa },  \label{4.8}
\end{equation}%
where 
\begin{equation*}
M(\zeta )=e^{-\zeta c_{X}}\mathrm{E}e^{\zeta X(t)},
\end{equation*}%
and the bivariate moment generating function $M(1,1,\tau )$ is given by (\ref%
{OUM}), in which the L\'{e}vy measure $\tilde{\nu}$ is defined by (\ref{4.40}%
), (\ref{4.41}), and $c_{X}$ is given by (\ref{KO}).

Condition (\ref{eq:existence.derivative}) becomes%
\begin{equation*}
\int_{1}^{\infty }ue^{q^{\ast }u}u^{-1-\kappa }e^{-\frac{u\gamma ^{1/\kappa }%
}{2}}du=\int_{1}^{\infty }u^{-\kappa }e^{-u(\frac{\gamma ^{1/\kappa }}{2}%
-q^{\ast })}du<\infty ,
\end{equation*}%
if $0<q^*<\frac{\gamma ^{1/\kappa }}{2},\kappa \in (0,1).$ Note that $M(q)$
exists if $(2q)^{\kappa }<\gamma .$

We can formulate the following

\begin{theorem}
\label{thm:TS}Let $X\left( t\right) $ be an OU processes with $TS(\kappa
,\delta ,\gamma )$ marginal distributions, $\lambda >0$ and 
\begin{equation*}
Q=\{q:0<q<\frac{\gamma ^{1/\kappa }}{2},\gamma \geq \max \{(2q^{\ast
})^{\kappa },4^{\kappa }\},\kappa \in (0,1),\delta >0\}\cap \lbrack
0,q^{\ast }],
\end{equation*}%
where $q^{\ast }$ is a fixed integer. Then, for any 
\begin{equation*}
b>\exp \left\{ -\gamma \delta +\frac{\delta }{1-q^{\ast }}\left( \gamma ^{%
\frac{1}{\kappa }}-2q^{\ast }\right) ^{\kappa }-\frac{q^{\ast }}{1-q^{\ast }}%
\delta \left( \gamma ^{\frac{1}{\kappa }}-2\right) ^{\kappa }\right\}
,\gamma \geq \max \{(2q^{\ast })^{\kappa },4^{\kappa }\},
\end{equation*}%
the stochastic processes 
\begin{equation*}
A_{n}(t)=\int_{0}^{t}\prod_{j=0}^{n}\Lambda ^{(j)}(sb^{j})ds,\;t\in \lbrack
0,1]
\end{equation*}%
converge in $\mathcal{L}_{q}$ to the stochastic process $A(t)$ for each fixed%
$\;t\in \lbrack 0,1]$ as $n\rightarrow \infty $ such that, $A_{q}\left(
1\right) \in \mathcal{L}_{q}$, for $q\in Q,$ and 
\begin{equation*}
\mathrm{E}A^{q}(t)\sim t^{\varsigma (q)},\;\;
\end{equation*}%
where the scaling function $\varsigma \left( q\right) $ is given by 
\begin{equation*}
\varsigma (q)=q\left( 1+\frac{\delta \gamma }{\log b}-\frac{\delta }{\log b}%
\left( \gamma ^{\frac{1}{\kappa }}-2\right) \right) ^{\kappa }+\frac{\delta 
}{\log b}\left( \gamma ^{\frac{1}{\kappa }}-2q\right) ^{\kappa }-\frac{%
\delta \gamma }{\log b},q\in Q.
\end{equation*}%
Moreover,%
\begin{equation*}
\text{\textrm{Var}}A\left( t\right) \geqslant \int_{0}^{t}\int_{0}^{t}\left[
M(1,1;u-w)-1\right] dudw,
\end{equation*}%
where $M(1,1,\tau )$ is given by (\ref{eq39}), in which the L\'{e}vy measure 
$\tilde{\nu}$ is defined by (\ref{4.40}), (\ref{4.41}).
\end{theorem}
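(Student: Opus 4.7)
The plan is to derive Theorem~\ref{thm:TS} as a direct specialization of Theorem~\ref{thm:ou} to the tempered stable marginal, so the work consists of (i) recording the cumulant transform of $TS(\kappa,\delta,\gamma)$, (ii) verifying the two quantitative hypotheses of Theorem~\ref{thm:ou}, and (iii) carrying out the algebra that turns $q-1-\log_b M(q)$ into the stated R\'enyi function. First I would recall that for $X\sim TS(\kappa,\delta,\gamma)$ one has
\begin{equation*}
M_0(\zeta)=\mathrm{E}e^{\zeta X}=\exp\bigl\{\delta\gamma-\delta(\gamma^{1/\kappa}-2\zeta)^{\kappa}\bigr\},\qquad 0<\zeta<\tfrac{\gamma^{1/\kappa}}{2},
\end{equation*}
so that the normalising constant (\ref{KO}) is precisely $c_X=\log M_0(1)$, giving $\mathrm{E}\Lambda(t)=1$. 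The condition $\gamma\ge 4^{\kappa}$ in the definition of $Q$ is needed in (\ref{4.8}) to ensure $M(2)$ (hence $\sigma_\Lambda^2$) is finite, and $\gamma\ge(2q^\ast)^\kappa$ keeps $M_0(q^\ast)$ finite.

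Next I would verify the two hypotheses of Theorem~\ref{thm:ou}. For (\ref{eq:existence.derivative}), using the explicit L\'evy density $b(u)=2^{\kappa}\delta\frac{\kappa}{\Gamma(1-\kappa)}u^{-1-\kappa}e^{-u\gamma^{1/\kappa}/2}$ the integral reduces to
\begin{equation*}
\int_{1}^{\infty}u^{-\kappa}\exp\Bigl\{-u\Bigl(\tfrac{\gamma^{1/\kappa}}{2}-q^{\ast}\Bigr)\Bigr\}\,du,
\end{equation*}
which is finite exactly when $q^{\ast}<\gamma^{1/\kappa}/2$, i.e.\ when $q^{\ast}\in Q$. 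Then I would rewrite (\ref{eq:ou.cond.b}) using the formula for $M_0$: a direct computation gives
\begin{equation*}
\Bigl(\tfrac{M_0(q^{\ast})}{M_0(1)^{q^{\ast}}}\Bigr)^{1/(q^{\ast}-1)}
=\exp\Bigl\{-\gamma\delta+\tfrac{\delta}{1-q^{\ast}}(\gamma^{1/\kappa}-2q^{\ast})^{\kappa}-\tfrac{q^{\ast}}{1-q^{\ast}}\delta(\gamma^{1/\kappa}-2)^{\kappa}\Bigr\},
\end{equation*}
which is the quantity appearing in the lower bound on $b$ in the statement. Thus both assumptions of Theorem~\ref{thm:ou} hold, which yields the $\mathcal{L}_q$-convergence of $A_n(t)$ to $A(t)$ for every $q\in Q$ and the scaling $\mathrm{E}A^q(t)\sim t^{q-\log_b M(q)}$.

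To finish, I would identify $T(q)=q-1-\log_b M(q)$. Since $M(q)=e^{-qc_X}M_0(q)$,
\begin{equation*}
\log M(q)=-q\bigl[\delta\gamma-\delta(\gamma^{1/\kappa}-2)^{\kappa}\bigr]+\delta\gamma-\delta(\gamma^{1/\kappa}-2q)^{\kappa},
\end{equation*}
and plugging this into $q-1-\log_b M(q)$ and regrouping the terms linear in $q$ produces the displayed expression for $T(q)$. The variance bound follows immediately from (\ref{R2}) of Theorem~\ref{thm:ou} together with (\ref{OUM}) applied with the tempered stable L\'evy triplet $(\tilde a,0,\tilde\nu)$ given by (\ref{4.40})--(\ref{4.41}).

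The essentially routine steps above hide one delicate point: the admissible range of $b$ and of $q$ must be threaded consistently through all three of Novikov's integrability constraint ($q^{\ast}<\gamma^{1/\kappa}/2$), existence of $M(2)$ ($\gamma\ge 4^{\kappa}$), and the concavity-type inequality $b^{q-1}>M_0(q)/M_0(1)^q$ used to apply Theorem~\ref{Th3.1}. The main obstacle is therefore bookkeeping rather than analysis: one has to ensure that the single scalar bound on $b$ stated in the theorem is simultaneously sufficient for \emph{all} $q\in Q$, which follows once one checks that $q\mapsto(M_0(q)/M_0(1)^q)^{1/(q-1)}$ is nondecreasing in $q$ on $(1,\gamma^{1/\kappa}/2)$ by convexity of $\log M_0$.
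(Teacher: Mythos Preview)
Your proposal is correct and follows exactly the route the paper takes: the paper's proof consists of the single line ``Theorem~\ref{thm:TS} follows from the Theorem~\ref{thm:ou},'' and the surrounding text records the same verifications you carry out (the cumulant transform of $TS(\kappa,\delta,\gamma)$, the integral check for (\ref{eq:existence.derivative}), and the existence of $M(q)$). Your write-up is in fact more detailed than the paper's; the only remark is that your final paragraph on the monotonicity of $q\mapsto (M_0(q)/M_0(1)^q)^{1/(q-1)}$ is not needed, since Theorem~\ref{thm:ou} is already stated so that the single condition (\ref{eq:ou.cond.b}) at $q^{\ast}$ yields the conclusion for the whole range $q\in[0,q^{\ast}]$.
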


Theorem~\ref{thm:TS} follows from the Theorem~\ref{thm:ou}. Note that for $%
\kappa =1/2$ Theorem~\ref{thm:TS}\ is an extension of the Theorem 4 of Anh,
Leonenko and Shieh (2008).

In this particular case we arrive to log-inverse Gaussian \ scenario where
the scaling function is of the form: 
\begin{equation*}
\varsigma \left( q\right) =q\left( 1+\frac{\delta \left[ \gamma -\sqrt{%
\gamma ^{2}-2}\right] }{\log b}\right) +\frac{\delta }{\log b}\sqrt{\gamma
^{2}-2q}-\frac{\gamma \delta }{\log b},q\in Q,
\end{equation*}%
and 
\begin{equation}
Q=\{q:0<q<\frac{\gamma ^{2}}{2},\gamma \geq 2,\delta >0\}\cap (0,q^{\ast })
\label{Q1}
\end{equation}%
if 
\begin{equation*}
b>\exp \left\{ -\gamma \delta -\frac{\delta }{1-q^{\ast }}\sqrt{\gamma
^{2}-2q}-\frac{q^{\ast }}{1-q^{\ast }}\delta \sqrt{\gamma ^{2}-2}\right\}
\end{equation*}%
and $q^{\ast }$ is a fixed integer.

Note that the set (\ref{Q1}) is an extension of the log-inverse Gaussian
scenario in the Theorem 4 of Anh, Leonenko and Shieh (2008a) which is
obtained for the set

\begin{equation*}
Q=\{q:0<q<\frac{\gamma ^{2}}{2},\gamma \geq 2,\delta >0\}\cap \lbrack 1,2].
\end{equation*}%
In this case the R\'{e}nyi function $T(q)=\varsigma (q)+1.$

We can construct log-tempered stable scenarios for a more general class of
finite superpositions of stationary tempered stable OU-type processes (\ref%
{3.12}), where $X_{j}(t),j=1,...,m,$ are independent stationary processes
with marginals $X_{j}(t)\thicksim TS(\kappa ,\delta _{j},\gamma ),j=1,...,m,$
and parameters $\delta _{j},j=1,...,m.$ Using notation (\ref{FS}), we
consider the class of processes 
\begin{equation*}
\mathbb{FS}_{m}\{TS(\kappa ,\sum_{j=1}^{m}\delta _{j},\gamma );2\kappa
\delta _{j}\gamma ^{\frac{\kappa -1}{\kappa }};4\kappa \left( 1-\kappa
\right) \gamma ^{\frac{\kappa -2}{\kappa }}\delta _{j})
\end{equation*}

It follows from the Theorem~\ref{thm:ou1} that the statement of Theorem~\ref%
{thm:TS} can be reformulated for $X_{m\sup }(t)$ with $\delta
=\sum_{j=1}^{m}\delta _{j}$ ,$~$and 
\begin{equation*}
M(\zeta _{1},\zeta _{2};t_{1}-t_{2})=\exp \left\{ -c_{X}(\zeta _{1}+\zeta
_{2})\right\} \mathrm{E}\exp \{\zeta _{1}X_{m\sup }(t_{1})+\zeta
_{2}X_{m\sup }(t_{2})\},
\end{equation*}%
where 
\begin{equation*}
\log \mathrm{E}\exp \{\zeta _{1}X_{m\sup }(t_{1})+\zeta _{2}X_{m\sup
}(t_{2})\}=\sum_{j=1}^{m}\log \mathrm{E}\exp \{\zeta _{1}X_{j}(t_{1})+\zeta
_{2}X_{j}(t_{2})\},
\end{equation*}%
and $\log \mathrm{E}\exp \{\zeta _{1}X_{j}(t_{1})+\zeta
_{2}X_{j}(t_{2})\},j=1,..,m$ are given by (\ref{eq39}).

Moreover, one can construct log-tempered stable scenarios for a more general
class of infinite superpositions of stationary tempered stable OU-type
processes (\ref{sup1}), where $X_{j}(t),j=1,...,m,$ are independent
stationary processes with marginals $X_{j}(t)\thicksim TS(\kappa ,\delta
_{j},\gamma ),j=1,2..$ and parameters $\delta _{j},j=1,2....$ .Using
notation (\ref{IS}), we consider the class of processes 
\begin{equation*}
\mathbb{IS\{}TS(\kappa ,\sum_{j=1}^{\infty }\delta _{j},\gamma );2\kappa
\delta _{j}\gamma ^{\frac{\kappa -1}{\kappa }};4\kappa \left( 1-\kappa
\right) \gamma ^{\frac{\kappa -2}{\kappa }}\delta _{j}\}.
\end{equation*}

It follows from the Theorem~\ref{thm:ou2} that the statement of the Theorem~%
\ref{thm:TS} remains true with $\delta =\sum_{j=1}^{\infty }\delta _{j},$
and 
\begin{equation*}
M(\zeta _{1},\zeta _{2};t_{1}-t_{2})=\exp \left\{ -c_{X}(\zeta _{1}+\zeta
_{2})\right\} \mathrm{E}\exp \{\zeta _{1}X_{\sup }(t_{1})+\zeta _{2}X_{\sup
}(t_{2})\},
\end{equation*}%
where 
\begin{equation*}
\log \mathrm{E}\exp \{\zeta _{1}X_{\sup }(t_{1})+\zeta _{2}X_{\sup
}(t_{2})\}=\sum_{j=1}^{\infty }\log \mathrm{E}\exp \{\zeta
_{1}X_{j}(t_{1})+\zeta _{2}X_{j}(t_{2})\},
\end{equation*}%
and $\log \mathrm{E}\exp \{\zeta _{1}X_{j}(t_{1})+\zeta
_{2}X_{j}(t_{2})\},j=1,..,m$ are given by (\ref{eq39}) with L\'{e}vy triplet 
$(\widetilde{a},0,\tilde{\nu})$ given by (\ref{4.40}) and (\ref{4.41}).

\section{Log-normal tempered stable scenario}

This subsection constructs a multifractal process based on the geometric
normal tempered stable (NTS) OU process. The log-normal tempered stable
scenario is important for risky asset modelling, see Leonenko, Petherick and
Taufer (2013).

We consider a random variable $X=\mu +\beta Y+\sqrt{Y}\epsilon ,$ where the
random variable $Y$ follows the $TS(\kappa ,\delta ,\gamma )$ distribution, $%
\epsilon $ has a standard normal distribution, and $Y$ and $\epsilon $ are
independent. We then say that $X$ follows the normal tempered stable law $%
NTS(\kappa ,\gamma ,\beta ,\mu ,\delta )$ (see, for example,
Barndorff-Nielsen and Shephard 2002). In particular, for $\kappa =1/2$ we
have that $NTS(\frac{1}{2},\gamma ,\beta ,\mu ,\delta )$ is the same as the
normal inverse Gaussian law $NIG(\alpha ,\beta ,\mu ,\delta )$ with $\alpha =%
\sqrt{\beta ^{2}+\gamma ^{2}}$ (see Barndorff-Nielsen 1998). We assume that%
\begin{equation*}
\mu \in \mathbb{R}\text{, }\mathbb{\delta >}0\text{, }\gamma >0,\beta
>0,\kappa \in (0,1).
\end{equation*}%
It was pointed out by Barndorff-Nielsen and Shephard (2002) that $NTS(\kappa
,\gamma ,\beta ,\mu ,\delta )$ \ is self-decomposable. Thus, there exists a
stationary OU-type process $X(t),t\geq 0,$ with stationary $NTS(\kappa
,\gamma ,\beta ,\mu ,\delta )$ marginal distribution and the correlation
function $r_{X}\left( t\right) =\exp \left\{ -\lambda \left\vert
t\right\vert \right\} .$Note that 
\begin{equation*}
\mathrm{E}X(t)=\mu +2\kappa \beta \delta \gamma ^{\frac{\kappa -1}{\kappa }},%
\mathrm{Var}X(t)=2\kappa \delta \gamma ^{\frac{\kappa -1}{\kappa }}-4\kappa
\beta ^{2}\delta \left( \kappa -1\right) \gamma ^{\frac{\kappa -2}{\kappa }}.
\end{equation*}%
We see that the variance can be factorized in a similar manner as in Section
7, and thus superposition can be used to create multifractal scenarios with
more elaborate dependence structures.

The cumulant transform of the random variable $X(t)$ with $NTS(\kappa
,\gamma ,\beta ,\mu ,\delta )$ distribution is equal to%
\begin{equation}
\log \mathrm{E}e^{\zeta X(t)}=\mu \zeta +\delta \gamma -\delta \left( \alpha
^{2}-(\beta +\zeta )^{2}\right) ^{\kappa },\left\vert \beta +\zeta
\right\vert <\alpha =\sqrt{\beta ^{2}+\gamma ^{1/\kappa }}.  \label{4.10}
\end{equation}%
The L\'{e}vy triplet of $NTS(\kappa ,\gamma ,\beta ,\mu ,\delta )$ is $%
(a,0,\nu ),$ where%
\begin{equation}
\nu (du)=b(u)du,  \label{4.11}
\end{equation}%
\begin{equation}
b(u)=\frac{\delta }{\sqrt{2\pi }}\alpha ^{\kappa +\frac{1}{2}}\frac{\kappa
2^{\kappa +1}}{\Gamma \left( 1-\kappa \right) }\left\vert u\right\vert
^{-(\kappa +\frac{1}{2})}K_{\kappa +\frac{1}{2}}(\alpha \left\vert
u\right\vert )e^{\beta u},u\in \mathbb{R},  \label{Levy1}
\end{equation}%
where here and below $_{\nu }(z)=\int_{0}^{\infty }e^{-z\cosh (u)}\cosh (\nu
z)du,\;z>0,$ is the modified Bessel function of the third kind of index $\nu
,\mathrm{Re}\nu >0.$

From (\ref{3.6}), (\ref{4.10}) and the formulae%
\begin{equation*}
K_{\nu }(x)=K_{\nu }(-x),~K_{-\nu }(x)=K_{\nu }(x),\frac{d}{dx}K_{\nu }(x)=-%
\frac{\lambda }{x}K_{\nu }(x)-K_{\nu -1}(x),
\end{equation*}%
we obtain that the BDLP $Z(t)$ in (\ref{3.4}) has a L\'{e}vy triplet ($%
\tilde{a},0,\tilde{\nu}),$\ with%
\begin{equation*}
\tilde{\nu}(du)=\lambda \omega (u)du,
\end{equation*}%
where 
\begin{align}
\omega (u)& =-b(u)-ub^{\prime }(u)=\frac{\delta }{\sqrt{2\pi }}\alpha
^{\kappa +\frac{1}{2}}\frac{\kappa 2^{\kappa +1}}{\Gamma \left( 1-\kappa
\right) }  \notag \\
& \hspace{1cm}\times \{(\kappa -\frac{1}{2})\left\vert u\right\vert
^{-(\kappa +\frac{1}{2})}K_{\kappa +\frac{1}{2}}(\alpha \left\vert
u\right\vert )e^{\beta u}+\left\vert u\right\vert ^{-(\kappa -\frac{1}{2})}[-%
\frac{\kappa +\frac{1}{2}}{\left\vert u\right\vert }K_{\kappa +\frac{1}{2}%
}(\alpha \left\vert u\right\vert )e^{\beta u}  \notag \\
& \hspace{1cm}-K_{\kappa -\frac{1}{2}}(\alpha \left\vert u\right\vert
)e^{\beta u}\alpha +K_{\kappa +\frac{1}{2}}(\alpha \left\vert u\right\vert
)e^{\beta u}\beta ]\}.  \label{4.12}
\end{align}

Consider a mother process of the form 
\begin{equation*}
\Lambda (t)=\exp \left\{ X\left( t\right) -c_{X}\right\} ,
\end{equation*}%
with%
\begin{equation*}
c_{X}=\mu +\delta \gamma -\delta \left( \beta ^{2}+\gamma ^{1/\kappa
}-(\beta +1)^{2}\right) ^{\kappa },\beta <\frac{\gamma ^{1/\kappa }-1}{2},
\end{equation*}%
where $X\left( t\right) $ is a stationary $NTS(\kappa ,\gamma ,\beta ,\mu
,\delta )$ OU-type process.

Under condition B$^{\prime \prime },$ we obtain the following moment
generating function%
\begin{equation}
M\left( \zeta \right) =\mathrm{E}\exp \left\{ \zeta \left( X\left( t\right)
-c_{X}\right) \right\} =e^{-c_{X}\zeta }e^{\mu \zeta +\delta \gamma -\delta
\left( \beta ^{2}+\gamma ^{1/\kappa }-(\beta +\zeta )^{2}\right) ^{\kappa
}\,},\text{\quad }\,\left\vert \beta +\zeta \right\vert <\alpha ,
\label{4.13}
\end{equation}%
and bivariate moment generating function 
\begin{align}
M\left( \zeta _{1},\zeta _{2};\left( t_{1}-t_{2}\right) \right) &=\mathrm{E}%
\exp \left\{ \zeta _{1}\left( X(t_{1}\right) -c_{X})+\zeta _{2}\left(
X(t_{2}\right) -c_{X})\right\}  \notag \\
&=e^{-c_{X}\left( \zeta _{1}+\zeta _{2}\right) }\mathrm{E}\exp \left\{ \zeta
_{1}X(t_{1})+\zeta _{2}\left( X(t_{2}\right) \right\} ,  \label{4.14}
\end{align}%
and $\mathrm{E}\exp \left\{ \zeta _{1}X(t_{1})+\zeta _{2}\left(
X(t_{2}\right) \right\} $ is given by (\ref{eq39}) with L\'{e}vy measure $%
\tilde{\nu}$ \ having density (\ref{4.12}). Thus, the correlation function
of the mother process takes the form 
\begin{equation}
\rho (\tau )=\frac{M(1,1;\tau )-1}{M(2)-1},  \label{4.15}
\end{equation}%
where we assumed that $\beta <(\gamma ^{1/\kappa }-4)/4.$

Note that as $z\rightarrow \infty $ the modified Bessel function of the
third kind of index $\nu .$ 
\begin{equation*}
K_{\nu }(z)=\sqrt{\frac{\pi }{2}}z^{-1/2}e^{-z}(1+\frac{4\nu ^{2}-1}{8z}%
+...),z>0,
\end{equation*}

Condition (\ref{eq:existence.derivative}) now becomes%
\begin{equation*}
\int_{\left\vert u\right\vert >1}ue^{q^{\ast }u}\left\vert u\right\vert
^{-(\kappa +\frac{1}{2})}K_{\kappa +\frac{1}{2}}(\alpha \left\vert
u\right\vert )e^{\beta u}du<\infty ,if\left\vert \beta +q^{\ast }\right\vert
<\alpha =\sqrt{\beta ^{2}+\gamma ^{1/\kappa }}.
\end{equation*}

\begin{theorem}
\label{thm:NTS}Let $X\left( t\right) $ be a stationary $NTS(\kappa ,\gamma
,\beta ,\mu ,\delta )$ OU-type process, $\lambda >0$ and 
\begin{equation*}
q\in Q=\left\{ q:0<q<q^{\ast }\leq \sqrt{\beta ^{2}+\gamma ^{1/\kappa }}%
-\beta ,\beta <(\gamma ^{1/\kappa }-1)/2,\mu \in \mathbb{R}\text{, }\mathbb{%
\delta >}0,\kappa \in (0,1)\right\} ,
\end{equation*}%
where $q^{\ast }$ is a fixed integer.

Then, for any 
\begin{equation*}
b>\exp \left\{ -\delta \gamma +\frac{\delta \left( \beta ^{2}+\gamma
^{1/\kappa }-(\beta +q^{\ast })^{2}\right) ^{\kappa }-q^{\ast }\delta \left(
\beta ^{2}+\gamma ^{1/\kappa }-(\beta +1)^{2}\right) ^{\kappa }}{1-q^{\ast }}%
\right\} ,
\end{equation*}%
the sequence of stochastic processes 
\begin{equation*}
A_{n}(t)=\int_{0}^{t}\prod_{j=0}^{n}\Lambda ^{(j)}(sb^{j})ds,\;t\in \lbrack
0,1]
\end{equation*}%
converge in $\mathcal{L}_{q}$ to the stochastic process $A(t)$ for each fixed%
$\;t\in \lbrack 0,1]$ as $n\rightarrow \infty $ such that $A\left( 1\right)
\in \mathcal{L}_{q}$ for $q\in Q,$ and 
\begin{equation*}
\mathrm{E}A^{q}(t)\sim t^{\varsigma (q)},\;\;q\in Q,
\end{equation*}%
where the scaling function $\varsigma \left( q\right) $ is given by 
\begin{align*}
\varsigma (q)& =\left( 1-\frac{\delta \left[ \left( \beta ^{2}+\gamma
^{1/\kappa }-(\beta +1)^{2}\right) \right] ^{\kappa }-\gamma }{\log b}%
\right) q+ \\
& \hspace{1cm}+\frac{\delta }{\log b}\left( \beta ^{2}+\gamma ^{1/\kappa
}-(\beta +q)^{2}\right) ^{\kappa }-\frac{\delta \gamma }{\log b},q\in Q.
\end{align*}%
Moreover,%
\begin{equation*}
\text{\textrm{Var}}A\left( t\right) \geqslant \int_{0}^{t}\int_{0}^{t}\left[
M(1,1;u-w)-1\right] dudw,
\end{equation*}%
where $M$ is given by (\ref{4.14}).
\end{theorem}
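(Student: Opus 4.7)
The plan is to deduce Theorem~\ref{thm:NTS} as a direct specialization of Theorem~\ref{thm:ou} to the mother process $\Lambda(t)=\exp\{X(t)-c_X\}$ built from a stationary OU type process $X(t)$ with $NTS(\kappa,\gamma,\beta,\mu,\delta)$ marginal. Concretely I must (i) verify the L\'evy measure integrability condition \eqref{eq:existence.derivative} for the prescribed range of $q^*$, (ii) translate the threshold \eqref{eq:ou.cond.b} on the scaling parameter $b$ into the explicit form stated, (iii) read off the R\'enyi function from \eqref{R1}, and (iv) obtain the variance lower bound from \eqref{R2}.

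For step (i), the L\'evy density of $X(0)$ is given by \eqref{Levy1}. Using the classical large-argument asymptotic $K_{\nu}(z)\sim\sqrt{\pi/(2z)}\,e^{-z}$ as $z\to\infty$, one has $b(u)\sim C|u|^{-(\kappa+1)}e^{-\alpha|u|+\beta u}$ as $|u|\to\infty$, with $\alpha=\sqrt{\beta^{2}+\gamma^{1/\kappa}}$. Splitting $\int_{|u|\geq 1}u e^{q^{*}u}\nu(du)$ at $u=\pm 1$, the positive tail contributes a multiple of $\int_{1}^{\infty}u^{-\kappa}e^{(\beta+q^{*}-\alpha)u}du$ while the negative tail contributes a multiple of $\int_{1}^{\infty}u^{-\kappa}e^{-(\alpha+\beta+q^{*})u}du$. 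The second is finite since $\alpha,\beta,q^{*}>0$, and the first is finite precisely when $q^{*}<\alpha-\beta=\sqrt{\beta^{2}+\gamma^{1/\kappa}}-\beta$, which is our standing hypothesis on $Q$.

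For step (ii), \eqref{4.10} gives $\log M_{0}(q)=\mu q+\delta\gamma-A(q)$ where $A(q):=\delta(\beta^{2}+\gamma^{1/\kappa}-(\beta+q)^{2})^{\kappa}$. A direct computation yields
\begin{equation*}
\frac{1}{q^{*}-1}\log\frac{M_{0}(q^{*})}{M_{0}(1)^{q^{*}}}=\frac{\delta\gamma(1-q^{*})-A(q^{*})+q^{*}A(1)}{q^{*}-1}=-\delta\gamma+\frac{A(q^{*})-q^{*}A(1)}{1-q^{*}},
\end{equation*}
so exponentiating shows that \eqref{eq:ou.cond.b} is exactly the lower bound on $b$ stated in Theorem~\ref{thm:NTS}. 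With (i) and (ii) in place, Theorem~\ref{thm:ou} yields $\mathcal{L}_{q}$ convergence of $A_{n}(t)$ to $A(t)\in\mathcal{L}_{q}$ for $q\in Q$, together with the scaling $\mathrm{E}A(t)^{q}\sim t^{q-\log_{b}\mathrm{E}\Lambda(t)^{q}}$.

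For step (iii), substituting $c_{X}=\mu+\delta\gamma-A(1)$ and $\log M_{0}(q)=\mu q+\delta\gamma-A(q)$ into \eqref{R1}, the two $\mu q$ contributions cancel and a routine rearrangement produces the asserted closed form of $T(q)$. Step (iv) is immediate from \eqref{R2} applied with $M(1,1;\tau)$ given by \eqref{4.14} and \eqref{eq39}, where the L\'evy triplet of the BDLP is $(\tilde a,0,\tilde\nu)$ with $\tilde\nu$ having density \eqref{4.12}. The only genuinely delicate point is the tail analysis in (i), since one has to combine $e^{\beta u}$ with $e^{q^{*}u}$ taking careful account of the sign of $u$ and rely on the precise Bessel asymptotic to pin down the sharp range of $q^{*}$; everything else reduces to algebra driven by the two general theorems already established.
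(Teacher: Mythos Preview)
Your proof is correct and follows exactly the paper's approach: the paper's entire proof is the one-line statement ``Theorem~\ref{thm:NTS} follows from the Theorem~\ref{thm:ou},'' with the verification of \eqref{eq:existence.derivative} via the Bessel asymptotic already done in the text preceding the theorem. Your write-up simply spells out in more detail the four ingredients (the L\'evy tail check, the algebra for the threshold on $b$, the substitution into \eqref{R1}, and \eqref{R2}) that make this specialization go through.
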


Theorem~\ref{thm:NTS} follows from the Theorem~\ref{thm:ou}.

Note that, for $\kappa =1/2,$ Theorem~\ref{thm:NTS} is an extension to
Theorem~5 of Anh, Leonenko and Shieh (2008a), which is now extended to
present a log-normal inverse Gaussian scenarios with scaling function$:$%
\begin{align*}
\varsigma (q)& =\left( 1-\frac{\delta \left[ \sqrt{\beta ^{2}+\gamma
^{2}-(\beta +1)^{2}}-\gamma \right] }{\log b}\right) q+ \\
& \hspace{1cm}+\frac{\delta }{\log b}\sqrt{\beta ^{2}+\gamma ^{2}-(\beta
+q)^{2}}-\frac{\delta \gamma }{\log b},q\in Q,
\end{align*}%
where we use the notation $NTS(\frac{1}{2},\gamma ,\beta ,\mu ,\delta
)=NIG(\alpha ,\beta ,\delta ,\mu ),$ $\gamma =\sqrt{\alpha ^{2}-\beta ^{2}},$
\begin{equation*}
Q=\left\{ q:0<q<q^{\ast }\leq \sqrt{\beta ^{2}+\gamma ^{2}}-\beta ,\beta
<(\gamma ^{2}-1)/2,\mu \in \mathbb{R}\text{, }\mathbb{\delta >}0\right\} ,
\end{equation*}%
and 
\begin{equation*}
b>\exp \left\{ -\delta \gamma +\frac{\delta \sqrt{\beta ^{2}+\gamma
^{2}-(\beta +q^{\ast })^{2}}-q^{\ast }\delta \sqrt{\beta ^{2}+\gamma
^{2}-(\beta +1)^{2}}}{1-q^{\ast }}\right\} .
\end{equation*}

\bigskip We can construct log-normal tempered stable scenarios for a more
general class of finite superpositions of stationary tempered stable OU-type
processes (\ref{3.12}), where $X_{j}(t),j=1,...,m,$ are independent
stationary processes with marginals $X_{j}(t)\thicksim NTS(\kappa ,\gamma
,\beta ,\mu _{j},\delta _{j}),$ $j=1,...,m,$ and parameters $\mu _{j},\delta
_{j},j=1,...,m.$. Using notation (\ref{FS}), we consider the class of
processes 
\begin{equation*}
\mathbb{FS}_{m}\{NTS(\kappa ,\gamma ,\beta ,\sum_{j=1}^{m}\mu
_{j},\sum_{j=1}^{m}\delta _{j});\mu _{j}+2\kappa \beta \delta _{j}\gamma ^{%
\frac{\kappa -1}{\kappa }};\left[ 2\kappa \gamma ^{\frac{\kappa -1}{\kappa }%
}-4\kappa \beta ^{2}\left( \kappa -1\right) \gamma ^{\frac{\kappa -2}{\kappa 
}}\right] \delta _{j}\}
\end{equation*}

Then the statement of the Theorem~\ref{thm:NTS} can be reformulated for $%
X_{m\sup }$ with $\mu =\sum_{j=1}^{m}\mu _{j},\delta =\sum_{j=1}^{m}\delta
_{j},$ and 
\begin{align*}
M\left( \zeta _{1},\zeta _{2};\left( t_{1}-t_{2}\right) \right) & =\mathrm{E}%
\exp \left\{ \zeta _{1}\left( X_{m\sup }(t_{1}\right) -c_{X})+\zeta
_{2}\left( X_{m\sup }(t_{2}\right) -c_{X})\right\} \\
& =e^{-c_{X}\left( \zeta _{1}+\zeta _{2}\right) }\mathrm{E}\exp \left\{
\zeta _{1}X_{m\sup }(t_{1})+\zeta _{2}\left( X_{m\sup }(t_{2}\right)
\right\} ,
\end{align*}%
where%
\begin{equation*}
\log \mathrm{E}\exp \{\zeta _{1}X_{m\sup }(t_{1})+\zeta _{2}X_{m\sup
}(t_{2})\}=\sum_{j=1}^{m}\log \mathrm{E}\exp \{\zeta _{1}X_{j}(t_{1})+\zeta
_{2}X_{j}(t_{2})\},
\end{equation*}%
and $\log \mathrm{E}\exp \{\zeta _{1}X_{j}(t_{1})+\zeta
_{2}X_{j}(t_{2})\},j=1,..,m$ are given by (\ref{eq39}) with L\'{e}vy measure 
$\tilde{\nu}$ \ having density (\ref{4.12}).

Moreover, one can construct log-normal tempered stable scenarios for a more
general class of infinite superpositions of stationary normal tempered
stable OU-type processes (\ref{sup1}), where $X_{j}(t),j=1,...,m,$ are
independent stationary processes with marginals $X_{j}(t)\thicksim
NTS(\kappa ,\gamma ,\beta ,\mu _{j},\delta _{j}),j=1,2..$ and parameters $%
\mu _{j},\delta _{j},j=1,2....$ . Using notation (\ref{IS}), we consider the
class of processes 
\begin{equation*}
\mathbb{IS}\{NTS(\kappa ,\gamma ,\beta ,\sum_{j=1}^{\infty }\mu
_{j},\sum_{j=1}^{\infty }\delta _{j});\mu _{j}+2\kappa \beta \delta
_{j}\gamma ^{\frac{\kappa -1}{\kappa }};\left[ 2\kappa \gamma ^{\frac{\kappa
-1}{\kappa }}-4\kappa \beta ^{2}\left( \kappa -1\right) \gamma ^{\frac{%
\kappa -2}{\kappa }}\right] \delta _{j}\}.
\end{equation*}%
Then the statement of Theorem~\ref{thm:NTS} remains true with $\delta
=\sum_{j=1}^{\infty }\delta _{j},\mu =\sum_{j=1}^{\infty }\mu _{j}$ and 
\begin{align*}
M\left( \zeta _{1},\zeta _{2};\left( t_{1}-t_{2}\right) \right) & =\mathrm{E}%
\exp \left\{ \zeta _{1}\left( X_{\sup }(t_{1}\right) -c_{X})+\zeta
_{2}\left( X_{\sup }(t_{2}\right) -c_{X})\right\} \\
& =e^{-c_{X}\left( \zeta _{1}+\zeta _{2}\right) }\mathrm{E}\exp \left\{
\zeta _{1}X_{\sup }(t_{1})+\zeta _{2}\left( X_{\sup }(t_{2}\right) \right\} ,
\end{align*}%
where 
\begin{equation*}
\log \mathrm{E}\exp \{\zeta _{1}X_{\sup }(t_{1})+\zeta _{2}X_{\sup
}(t_{2})\}=\sum_{j=1}^{\infty }\log \mathrm{E}\exp \{\zeta
_{1}X_{j}(t_{1})+\zeta _{2}X_{j}(t_{2})\},
\end{equation*}%
and $\log \mathrm{E}\exp \{\zeta _{1}X_{j}(t_{1})+\zeta
_{2}X_{j}(t_{2})\},j=1,..,m,...$ are given by (\ref{3.8}) with L\'{e}vy
measure $\tilde{\nu}$ \ having density (\ref{4.12}).

\section{Log-gamma scenario}

The log-gamma multifractal scenario is well-known in the theory of
turbulence and multiplicative cascades (Saito 1992). \ In this section, we
propose a stationary version of the log-gamma scenario. We will use a
stationary OU type process $X(t),t\in \mathbb{R}_{+},$(see, \ref{3.4})) with
marginal gamma distribution $\Gamma (\beta ,\alpha )$. It is known that the
gamma distribution with the moment generating function $\mathrm{E}\exp
\{\zeta X(t)\}=\left( 1-\frac{\zeta }{\alpha }\right) ^{-\beta },$~$\zeta
<\alpha ,\alpha >0,\beta >0,$ is self-decomposable. The L\'{e}vy triplet is
of the form $\left( 0,0,\nu \right) $, where 
\begin{equation*}
\nu (du)=\frac{\beta e^{-\alpha u}}{u}\mathbf{1}_{[0,\infty )}(u)du,
\end{equation*}%
while the BDLP $Z(t)$ in (\ref{3.4}) is a compound Poisson subordinator,
that is

\begin{equation*}
\kappa (z)=\log \mathrm{E}e^{izZ(1)}=\frac{i\beta z}{\alpha -iz},z\in 
\mathbb{R}\text{,}
\end{equation*}%
and the (finite) L\'{e}vy measure $\tilde{\nu}$ of $Z(1)$ is%
\begin{equation}
\tilde{\nu}(du)=\alpha \beta e^{-\alpha u}\mathbf{1}_{(0,\infty )}(u)du.
\label{eq:LevyKhinchine.Poisson.subordinator}
\end{equation}%
The covariance function is then $r_{X}\left( t\right) =(\beta /\alpha
^{2})\exp \left( -\lambda \left\vert t\right\vert \right) .$

Consider a mother process of the form 
\begin{equation*}
\Lambda (t)=\exp \left( X\left( t\right) -c_{X}\right) ,\text{\quad }%
c_{X}=\log \frac{1}{\left( 1-\frac{1}{\alpha }\right) ^{\beta }},\alpha >1,
\end{equation*}%
where $X\left( t\right) $ is a stationary gamma OU type stochastic process.

We obtain the following moment generating function: 
\begin{equation}
M\left( \zeta \right) =\mathrm{E}\exp \left( \zeta \left( X\left( t\right)
-c_{X}\right) \right) =\frac{e^{-c_{X}\zeta }}{\left( 1-\frac{\zeta }{\alpha 
}\right) ^{\beta }},\text{~}\zeta <\alpha ,\alpha >1,  \label{4.3}
\end{equation}%
and the bivariate moment generating function is given by the formula (\ref%
{eq39}), in which the measure $\tilde{\nu}$ is given by (\ref%
{eq:LevyKhinchine.Poisson.subordinator}), since 
\begin{align}
M\left( \zeta _{1},\zeta _{2};\left( t_{1}-t_{2}\right) \right) & =\mathrm{E}%
\exp \left( \zeta _{1}\left( X(t_{1}\right) -c_{X})+\zeta _{2}\left(
X(t_{2}\right) -c_{X})\right)  \notag \\
& =e^{-c_{X}\left( \zeta _{1}+\zeta _{2}\right) }\mathrm{E}\exp \left( \zeta
_{1}X(t_{1})+\zeta _{2}\left( X(t_{2}\right) \right)  \notag \\
& =e^{-c_{X}\left( \zeta _{1}+\zeta _{2}\right) }\exp \left( \int_{\mathbb{R}%
}\frac{\beta \sum_{j=1}^{2}\zeta _{j}e^{-\lambda (t_{j}-s)}\mathbf{1}%
_{[0,\infty )}(t_{j}-s)}{\alpha -\sum_{j=1}^{2}\zeta _{j}e^{-\lambda
(t_{j}-s)}\mathbf{1}_{[0,\infty )}(t_{j}-s)}ds\right) ,  \label{4.4}
\end{align}%
or 
\begin{align}
M\left( \zeta _{1},\zeta _{2};\left( t_{1}-t_{2}\right) \right) & =\exp
\left( -c_{X}\left( \zeta _{1}+\zeta _{2}\right) \right)  \notag \\
& \times \exp \biggl(\int_{\mathbb{R}}\int_{\mathbb{R}}\biggl(\exp \biggl(%
u\sum_{j=1}^{2}\zeta _{j}e^{-\lambda (t_{j}-s)}\mathbf{1}_{[0,\infty
)}(t_{j}-s)\biggr)-1  \notag \\
& -u\biggl(\sum_{j=1}^{2}\zeta _{j}e^{-\lambda (t_{j}-s)}\mathbf{1}%
_{[0,\infty )}(t_{j}-s)\biggl)\mathbf{1}_{\left[ -1,1\right] }\left(
u\right) \biggr)\alpha \beta e^{-\alpha u}\mathbf{1}_{(0,\infty )}(u)duds%
\biggr).  \label{4.5}
\end{align}%
Thus, the correlation function of the mother process takes the form 
\begin{equation*}
\rho (\tau )=\frac{M(1,1;\tau )-1}{M(2)-1},
\end{equation*}%
where $M(2)$ is given by (\ref{4.3}) and $M(1,1;\tau )$ is given by (\ref%
{4.5}). It turns out that, in this case, 
\begin{equation*}
\log _{b}\mathrm{E}\Lambda \left( t\right) ^{q}=\frac{1}{\log b}\left(
-q\log \frac{1}{\left( 1-\frac{1}{\alpha }\right) ^{\beta }}-\beta \log
\left( 1-\frac{q}{\alpha }\right) \right) ,
\end{equation*}%
and the condition (\ref{eq:existence.derivative}) of Theorem~\ref{thm:ou}
holds, since 
\begin{equation*}
\int_{\left\vert u\right\vert \geq 1}ue^{q^{\ast }u}\nu (du)=\frac{\alpha
^{\beta }\beta }{\Gamma \left( \beta \right) }\int_{1}^{\infty }e^{q^{\ast
}u}e^{-\alpha u}du<\infty ,q^{\ast }<\alpha .
\end{equation*}%
We formulate the following

\begin{theorem}
\label{thm:G}Let $X\left( t\right) $ be a stationary gamma OU type
stochastic process and let $Q=\{q:0<q<q^{\ast }\,<\alpha ,\alpha >2,\beta
>0\}$, where $q^{\ast }$ is a fixed integer. Then, for any%
\begin{equation*}
b>\left[ \left( 1-\frac{1}{\alpha }\right) ^{\beta q^{\ast }}/\left( 1-\frac{%
q^{\ast }}{\alpha }\right) ^{\beta }\right] ^{\frac{1}{q^{\ast }-1}},
\end{equation*}%
the stochastic processes $A_{n}\left( t\right) $ defined by (\ref{2.2}) for
the mother process as in condition \textbf{B$^{\prime \prime \prime }$}
converge in $\mathcal{L}_{q}$ to the stochastic process $A(t)$ as $%
n\rightarrow \infty ,$ such that $A\left( t\right) \in \mathcal{L}_{q}$ and 
\begin{equation*}
\mathrm{E}A(t)^{q}\sim t^{\varsigma \left( q\right) },
\end{equation*}%
where the scaling function $\varsigma \left( q\right) $ is given by 
\begin{equation*}
\varsigma \left( q\right) =q\left( 1+\frac{1}{\log b}\log \frac{1}{\left( 1-%
\frac{1}{\alpha }\right) ^{\beta }}\right) +\frac{\beta }{\log b}\log \left(
1-\frac{q}{\alpha }\right) ,q\in Q.
\end{equation*}%
Moreover, 
\begin{equation*}
\mathrm{Var}A\left( t\right) \geqslant \int_{0}^{t}\int_{0}^{t}\left(
M(1,1;u-w)-1\right) dudw,
\end{equation*}%
where $M$ is given by (\ref{4.4}) or (\ref{4.5}).
\end{theorem}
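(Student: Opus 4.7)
The plan is to derive Theorem~\ref{thm:G} as a direct specialization of Theorem~\ref{thm:ou}, verifying each of its hypotheses for the particular case of a gamma-OU mother process, and then reading off the explicit Rényi function and variance bound from the gamma cumulant generating function. Concretely, I would first compute $M_{0}(\zeta)=\mathrm{E}e^{\zeta X(t)}=(1-\zeta/\alpha)^{-\beta}$ for $\zeta<\alpha$, confirm that $c_{X}=\log M_{0}(1)=-\beta\log(1-1/\alpha)$, so that $\mathrm{E}\Lambda(t)=1$ as required by condition \textbf{A}$^{\prime}$.

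Next, I would verify the integrability assumption~(\ref{eq:existence.derivative}) of Theorem~\ref{thm:ou}. Using the explicit Lévy density $\nu(du)=\beta u^{-1}e^{-\alpha u}\mathbf{1}_{(0,\infty)}(u)\,du$, the integral $\int_{1}^{\infty}ue^{q^{\ast}u}\nu(du)=\beta\int_{1}^{\infty}e^{-(\alpha-q^{\ast})u}du$ is finite exactly when $q^{\ast}<\alpha$, which is ensured by the hypothesis on $Q$. The scaling condition~(\ref{eq:ou.cond.b}) on $b$ becomes
\begin{equation*}
b^{q^{\ast}-1}>\frac{M_{0}(q^{\ast})}{M_{0}(1)^{q^{\ast}}}=\frac{(1-1/\alpha)^{\beta q^{\ast}}}{(1-q^{\ast}/\alpha)^{\beta}},
\end{equation*}
which is precisely the stated lower bound on $b$. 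Hence Theorem~\ref{thm:ou} applies and yields $\mathcal{L}_{q}$ convergence of $A_{n}(t)$ to a limit $A(t)\in\mathcal{L}_{q}$ together with the moment asymptotics $\mathrm{E}A(t)^{q}\sim t^{q-\log_{b}\mathrm{E}\Lambda^{q}(t)}$ for $q\in[0,q^{\ast}]$.

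The Rényi function is then obtained by inserting the gamma moment generating function into~(\ref{R1}): since $\log M(q)=-qc_{X}-\beta\log(1-q/\alpha)$, we have
\begin{equation*}
T(q)=q-1-\log_{b}M(q)=q\Bigl(1+\tfrac{c_{X}}{\log b}\Bigr)+\tfrac{\beta}{\log b}\log\Bigl(1-\tfrac{q}{\alpha}\Bigr)-1,
\end{equation*}
which matches the claimed expression after substituting $c_{X}=\log(1-1/\alpha)^{-\beta}$. Finally, the variance bound transfers directly from~(\ref{R2}) once the bivariate moment generating function $M(\zeta_{1},\zeta_{2};\tau)$ of the mother process is identified, which is a straightforward specialization of~(\ref{eq39}) to the Lévy measure~(\ref{eq:LevyKhinchine.Poisson.subordinator}) and yields the two equivalent forms~(\ref{4.4}) and~(\ref{4.5}).

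The only place requiring slightly more care is checking that the constant $c_{X}$ is consistent across the appearances of $M(q)$ in Theorem~\ref{thm:ou} versus $M_{0}(q)$, and tracking the logarithms so that all factors $1/\log b$ come out with the right signs. Beyond that bookkeeping, no new analytic argument is needed: the multifractal structure, the scaling law, the Rényi formula, and the variance estimate all descend mechanically from Theorem~\ref{thm:ou} once conditions~(\ref{eq:existence.derivative}) and~(\ref{eq:ou.cond.b}) have been checked for the gamma case.
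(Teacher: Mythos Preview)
Your proposal is correct and follows exactly the paper's approach: the paper's proof of Theorem~\ref{thm:G} is literally the single sentence ``Theorem~\ref{thm:G} follows from the Theorem~\ref{thm:ou}'', with the verification of condition~(\ref{eq:existence.derivative}) for $q^{\ast}<\alpha$ carried out in the discussion immediately preceding the theorem statement. Your write-up simply makes explicit the bookkeeping (the form of $M_{0}$, the translation of~(\ref{eq:ou.cond.b}) into the stated bound on $b$, and the substitution into~(\ref{R1})) that the paper leaves to the reader.
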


\begin{proof}
Theorem~\ref{thm:G} follows from the Theorem~\ref{thm:ou}
\end{proof}

Note that the Theorem~{\ref{thm:G}} is an extension of the log-gamma
scenario in the Theorem 3 of Anh, Leonenko and Shieh (2008a), in which the
set $Q=\{q:0<q\leq 2,\alpha =2,\beta >0\}.$

\bigskip We can construct log-tempered stable scenarios for a more general
class of finite superpositions of stationary gamma OU-type processes (\ref%
{3.12}), where $X_{j}(t),j=1,...,m,$ are independent stationary processes
with marginals $\Gamma (\beta _{j},\alpha ),j=1,...,m.$ Using notation (\ref%
{FS}), we consider the class of processes 
\begin{equation*}
\mathbb{FS}_{m}\{\Gamma (\sum_{j=1}^{m}\beta _{j}),\alpha );\frac{\beta _{j}%
}{\alpha };\frac{\beta _{j}}{\alpha ^{2}}).
\end{equation*}

Theorem~{\ref{thm:G}} can be reformulated for the process of superposition $%
X_{m\sup }$ with $\beta =\sum_{j=1}^{m}\beta _{j}$ and 
\begin{align*}
M\left( \zeta _{1},\zeta _{2};\left( t_{1}-t_{2}\right) \right) & =\mathrm{E}%
\exp \left\{ \zeta _{1}\left( X_{m\sup }(t_{1}\right) -c_{X})+\zeta
_{2}\left( X_{m\sup }(t_{2}\right) -c_{X})\right\} \\
& =e^{-c_{X}\left( \zeta _{1}+\zeta _{2}\right) }\mathrm{E}\exp \left\{
\zeta _{1}X_{m\sup }(t_{1})+\zeta _{2}\left( X_{m\sup }(t_{2}\right)
\right\} ,
\end{align*}%
where%
\begin{equation*}
\log \mathrm{E}\exp \{\zeta _{1}X_{m\sup }(t_{1})+\zeta _{2}X_{m\sup
}(t_{2})\}=\sum_{j=1}^{m}\log \mathrm{E}\exp \{\zeta _{1}X_{j}(t_{1})+\zeta
_{2}X_{j}(t_{2})\},
\end{equation*}%
and $\log \mathrm{E}\exp \{\zeta _{1}X_{j}(t_{1})+\zeta
_{2}X_{j}(t_{2})\},j=1,..,m$ are given by (\ref{4.4}) or (\ref{4.5}).

Moreover, one can construct log-gamma scenarios for a more general class of
infinite superpositions of stationary gamma OU-type processes (\ref{sup1}),
where $X_{j}(t),j=1,...,$ are independent stationary processes with
marginals $\Gamma (\beta _{j},\alpha ),j=1,2..$. Using notation (\ref{IS}),
we consider the class of processes 
\begin{equation*}
\mathbb{IS}\{\Gamma ((\sum_{j=1}^{\infty }\beta _{j}),\alpha );\frac{\beta
_{j}}{\alpha };\frac{\beta _{j}}{\alpha ^{2}})\}.
\end{equation*}

Then the statement of the Theorem~{\ref{thm:G}} remains true with $\beta
=\sum_{j=1}^{\infty }\beta _{j}$ and 
\begin{align*}
M\left( \zeta _{1},\zeta _{2};\left( t_{1}-t_{2}\right) \right) & =\mathrm{E}%
\exp \left\{ \zeta _{1}\left( X_{\sup }(t_{1}\right) -c_{X})+\zeta
_{2}\left( X_{\sup }(t_{2}\right) -c_{X})\right\} \\
& =e^{-c_{X}\left( \zeta _{1}+\zeta _{2}\right) }\mathrm{E}\exp \left\{
\zeta _{1}X_{\sup }(t_{1})+\zeta _{2}\left( X_{\sup }(t_{2}\right) \right\} ,
\end{align*}%
where%
\begin{equation*}
\log \mathrm{E}\exp \{\zeta _{1}X_{\sup }(t_{1})+\zeta _{2}X_{\sup
}(t_{2})\}=\sum_{j=1}^{\infty }\log \mathrm{E}\exp \{\zeta
_{1}X_{j}(t_{1})+\zeta _{2}X_{j}(t_{2})\},
\end{equation*}%
and $\log \mathrm{E}\exp \{\zeta _{1}X_{j}(t_{1})+\zeta
_{2}X_{j}(t_{2})\},j=1,2,...$ are given by (\ref{4.4}) or (\ref{4.5}).

\section{Log-variance gamma scenario}

The next example of a hyperbolic OU process is based on the variance-gamma
distribution (see, for example, Madan \emph{et al.} 1998, Finlay and Seneta
2006, Carr \emph{et al.} 2007). We will use a stationary OU type process (%
\ref{3.4}) with marginal variance gamma distribution $VG\left( \kappa
,\alpha ,\beta ,\mu \right) $, which has the moment generating function 
\begin{equation*}
\log \mathrm{E}e^{\zeta X(t)}=\mu \zeta +2\kappa \log \left( \gamma /\sqrt{%
\alpha ^{2}-\left( \beta +\zeta \right) ^{2}}\right) ,\quad \left\vert \beta
+\zeta \right\vert <\alpha ,
\end{equation*}%
where the set of parameters is of the form 
\begin{equation*}
\gamma ^{2}=\alpha ^{2}-\beta ^{2},\kappa >0,\alpha >\left\vert \beta
\right\vert >0,\mu \in \mathbb{R}.
\end{equation*}%
It is known that this distribution is self-decomposable. Note that%
\begin{equation*}
\mathrm{E}X\left( t\right) =\mu +\frac{2\beta \kappa }{\gamma ^{2}},\quad 
\mathrm{Var}X\left( t\right) =\frac{2\kappa }{\gamma ^{2}}\left( 1+2\left( 
\frac{\beta }{\gamma }\right) ^{2}\right) .
\end{equation*}%
Thus, if $X_{j,}(t)\,j=1,...,m,$ are independent so that $X_{j}\thicksim
VG\left( \kappa _{j},\alpha ,\beta ,\mu _{j}\right) ,$ $j=1,...,m,$ then we
have that 
\begin{equation*}
X_{1}(t)+...+X_{m}(t)\thicksim VG\left( \kappa _{1}+...+\kappa _{n},\alpha
,\beta ,\mu _{1}+...+\mu _{n}\right) .
\end{equation*}%
The L\'{e}vy measure $\nu $ of $X(t)$ has density%
\begin{equation}
p\left( u\right) =\frac{\kappa }{\left\vert u\right\vert }e^{\beta u-\alpha
\left\vert u\right\vert },u\in \mathbb{R}.  \label{eq4171}
\end{equation}%
By (\ref{3.6}) the L\'{e}vy measure $\tilde{\nu}$ of the BDLP $Z(t)$ in (\ref%
{3.4}) has density%
\begin{equation}
q\left( u\right) =-p\left( u\right) -up^{\prime }\left( u\right) ,
\label{eq418}
\end{equation}%
\begin{equation*}
p^{\prime }\left( u\right) =\left\{ 
\begin{array}{cc}
-\frac{\kappa }{u}e^{u(\beta +\alpha )}(\beta +\alpha )+\frac{\kappa }{u^{2}}%
e^{u(\beta +\alpha )}, & u<0, \\ 
\frac{\kappa }{u}e^{u(\beta -\alpha )}(\beta -\alpha )-\frac{\kappa }{u^{2}}%
e^{u(\beta -\alpha )}, & u>0.%
\end{array}%
\right.
\end{equation*}

Consider a mother process of the form 
\begin{equation*}
\Lambda (t)=\exp \left( X\left( t\right) -c_{X}\right) ,c_{X}=\mu +2\kappa
\log \left( \gamma /\sqrt{\alpha ^{2}-\left( \beta +1\right) ^{2}}\right)
,\quad \left\vert \beta +1\right\vert <\alpha ,
\end{equation*}%
where $X\left( t\right) $ is a stationary $VG\left( \kappa ,\alpha ,\beta
,\mu \right) $ OU type process with covariance function 
\begin{equation*}
R_{X}\left( t\right) =\frac{2\kappa }{\gamma ^{2}}\left( 1+2\left( \frac{%
\beta }{\gamma }\right) ^{2}\right) \exp \left( -\lambda \left\vert
t\right\vert \right) .
\end{equation*}
\ We obtain the moment generating function%
\begin{equation}
M\left( \zeta \right) =\mathrm{E}\exp \left( \zeta \left( X\left( t\right)
-c_{X}\right) \right) =e^{-c_{X}\zeta }e^{\mu \zeta +2\kappa \log \left(
\gamma /\sqrt{\alpha ^{2}-\left( \beta +\zeta \right) ^{2}}\right) },\text{%
\quad }\,\left\vert \beta +\zeta \right\vert <\alpha ,  \label{4.19}
\end{equation}%
and the bivariate moment generating function 
\begin{align}
M\left( \zeta _{1},\zeta _{2};\left( t_{1}-t_{2}\right) \right) & =\mathrm{E}%
\exp \left( \zeta _{1}\left( X(t_{1}\right) -c_{X})+\zeta _{2}\left(
X(t_{2}\right) -c_{X})\right)  \notag \\
& =e^{-c_{X}\left( \zeta _{1}+\zeta _{2}\right) }\mathrm{E}\exp \left( \zeta
_{1}X(t_{1})+\zeta _{2}\left( X(t_{2}\right) \right) ,  \label{4.20}
\end{align}%
where $\mathrm{E}\exp \left( \zeta _{1}X(t_{1})+\zeta _{2}\left(
X(t_{2}\right) \right) $ is given by (\ref{eq39}) with L\'{e}vy measure $%
\tilde{\nu}$ \ having density (\ref{eq418}). Thus, the correlation function
of the mother process takes the form 
\begin{equation*}
\rho (\tau )=\frac{M(1,1;\tau )-1}{M(2)-1},
\end{equation*}%
where $M(2)$ is given by (\ref{4.19}) and $M(1,1;\tau )$ is given by (\ref%
{4.20}).

The condition (\ref{eq:existence.derivative}) of Theorem \textit{\ref{thm:ou}%
} holds for $q<\alpha -\left\vert \beta \right\vert $ $.$

\begin{theorem}
\label{thm:VG}Let $X\left( t\right) $ be a stationary $VG\left( \kappa
,\alpha ,\beta ,\mu \right) $ OU type process and let 
\begin{equation*}
Q=\left\{ q:0<q<q^{\ast }\,<\left\vert \alpha \right\vert -\left\vert \beta
\right\vert ,\kappa >0\right\} ,
\end{equation*}%
where $q^{\ast }$ is a fixed integer.

Then, for any 
\begin{equation*}
b>\exp \left\{ 2\kappa \left[ \frac{1}{1-q^{\ast }}\log \frac{\gamma }{\sqrt{%
\alpha ^{2}-(\beta +q^{\ast })^{2}}}+\frac{q^{\ast }}{1-q^{\ast }}\log \frac{%
\gamma }{\sqrt{\alpha ^{2}-(\beta +1)^{2}}}\right] \right\} ,
\end{equation*}%
the stochastic processes $A_{n}\left( t\right) $ defined by (\ref{2.2}) for
the mother process as in condition \textbf{B}$^{\prime \prime \prime \prime
} $ converge in $\mathcal{L}_{q}$ to the stochastic process $A(t)$ as $%
n\rightarrow \infty $ such that, if $A\left( 1\right) \in \mathcal{L}_{q}$
for $q\in Q,$%
\begin{equation*}
\mathrm{E}A(t)^{q}\sim t^{\varsigma \left( q\right) },
\end{equation*}%
where the scaling function is given by 
\begin{equation*}
\varsigma \left( q\right) =q\left( 1+\frac{2\kappa }{\log b}\log \frac{%
\gamma }{\sqrt{\alpha ^{2}-(\beta +1)^{2}}}\right) +\frac{2\kappa }{\log b}%
\log \sqrt{\alpha ^{2}-\left( \beta +q\right) ^{2}}-\frac{2\kappa }{\log b}%
\log \gamma .
\end{equation*}%
Moreover, 
\begin{equation*}
\mathrm{Var}A\left( t\right) \geqslant \int_{0}^{t}\int_{0}^{t}\left(
M(1,1;u-w)-1\right) dudw,
\end{equation*}%
where $M$ is given by (\ref{4.20}).
\end{theorem}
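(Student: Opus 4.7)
The plan is to deduce Theorem~\ref{thm:VG} directly from the general result Theorem~\ref{thm:ou} applied to the OU process with $VG(\kappa,\alpha,\beta,\mu)$ marginals. To do this, I will verify the two hypotheses of Theorem~\ref{thm:ou}, namely the Lévy-measure integrability (\ref{eq:existence.derivative}) and the lower bound on the scaling parameter $b$ in (\ref{eq:ou.cond.b}); the explicit form of $T(q)$ and the variance bound will then be read off from (\ref{R1}) and (\ref{R2}) after substituting the variance-gamma moment generating function (\ref{4.19}).

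For the first hypothesis, I will use the explicit formula for the Lévy density $p(u)=\frac{\kappa}{|u|}e^{\beta u-\alpha|u|}$ given in (\ref{eq4171}) and estimate
\begin{equation*}
\int_{|x|\ge 1}x e^{q^{\ast}x}\nu(dx)
=\kappa\int_{1}^{\infty}e^{(q^{\ast}+\beta-\alpha)x}\,dx
-\kappa\int_{1}^{\infty}e^{-(q^{\ast}+\beta+\alpha)x}\,dx.
\end{equation*}
Both integrals are finite exactly when $q^{\ast}+\beta<\alpha$ and $q^{\ast}+\beta>-\alpha$, and both are guaranteed by the hypothesis $0<q^{\ast}\le \alpha-|\beta|$ that is built into the definition of $Q$.

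For the second hypothesis, I will use (\ref{4.19}) to write $M_{0}(q)=e^{\mu q}\bigl(\gamma/\sqrt{\alpha^{2}-(\beta+q)^{2}}\bigr)^{2\kappa}$, whence
\begin{equation*}
\frac{M_{0}(q^{\ast})}{M_{0}(1)^{q^{\ast}}}
=\left(\frac{\gamma}{\sqrt{\alpha^{2}-(\beta+q^{\ast})^{2}}}\right)^{\!2\kappa}
\left(\frac{\sqrt{\alpha^{2}-(\beta+1)^{2}}}{\gamma}\right)^{\!2\kappa q^{\ast}}
\end{equation*}
(the $e^{\mu}$ factors cancel, which is the reason for the choice of $c_{X}$). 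Taking the $1/(q^{\ast}-1)$-th power and rewriting $1/(q^{\ast}-1)=-1/(1-q^{\ast})$ produces exactly the threshold on $b$ stated in the theorem, so (\ref{eq:ou.cond.b}) is equivalent to the given condition.

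Once both conditions are checked, Theorem~\ref{thm:ou} yields $\mathcal{L}_{q}$-convergence of $A_{n}(t)$ to $A(t)\in\mathcal{L}_{q}$, the scaling $\mathrm{E}A^{q}(t)\sim t^{q-\log_{b}\mathrm{E}\Lambda^{q}(t)}$, and the variance lower bound (\ref{R2}), which coincides with the claimed bound because $M(1,1;\cdot)$ is given by (\ref{4.20}). It remains to compute the Rényi function from (\ref{R1}): substituting $c_{X}=\mu+2\kappa\log(\gamma/\sqrt{\alpha^{2}-(\beta+1)^{2}})$ and $\log_{b}M_{0}(q)=(\mu q+2\kappa\log(\gamma/\sqrt{\alpha^{2}-(\beta+q)^{2}}))/\log b$ makes the $\mu$-terms cancel and yields exactly the stated formula for $T(q)$. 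The only subtle point in this plan is keeping track of signs and the parameter ranges (in particular making sure the non-negativity of $q^{\ast}+\beta\pm\alpha$ in the right directions) so that both the integrability and the $b$-condition are genuinely implied by the description of $Q$; the rest is bookkeeping.
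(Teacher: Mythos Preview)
Your proposal is correct and follows exactly the approach of the paper, which simply states that Theorem~\ref{thm:VG} follows from Theorem~\ref{thm:ou}; you have merely spelled out the verification of hypotheses (\ref{eq:existence.derivative}) and (\ref{eq:ou.cond.b}) and the substitution into (\ref{R1})--(\ref{R2}) that the paper leaves implicit. The only caveat is that the paper's displayed threshold for $b$ appears to contain minor typographical slips (signs and a missing $2\kappa$ in the second term), so ``produces exactly the threshold stated'' should be read modulo those.
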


\begin{proof}
Theorem~{\ref{thm:VG}} follows from the Theorem~\ref{thm:ou}.
\end{proof}

\bigskip

\bigskip We can construct log-variance-gamma scenarios for a more general
class of finite superpositions of stationary variance gamma OU-type
processes (\ref{3.12}), where $X_{j}(t),j=1,...,m,$ are independent
stationary processes with marginals $VG\left( \kappa _{j},\alpha ,\beta ,\mu
_{j}\right) ,j=1,...,m.$ Using notation (\ref{FS}), we consider the class of
processes 
\begin{equation*}
\mathbb{FS}_{m}\{VG\left( \kappa _{1}+...+\kappa _{m},\alpha ,\beta ,\delta
,\mu _{1}+...+\mu _{m}\right) ;\mu _{j}+\frac{2\beta }{\gamma ^{2}}\kappa
_{j};\frac{2}{\gamma ^{2}}\left( 1+2\left( \frac{\beta }{\gamma }\right)
^{2}\right) \kappa _{j}\}.
\end{equation*}

The generalization of Theorem~{\ref{thm:VG}} \ remains true for this
situation with $\kappa =\sum_{j=1}^{m}\kappa _{j}$ , $\mu =\sum_{j=1}^{m}\mu
_{j},$ and 
\begin{align*}
M\left( \zeta _{1},\zeta _{2};\left( t_{1}-t_{2}\right) \right) & =\mathrm{E}%
\exp \left\{ \zeta _{1}\left( X_{m\sup }(t_{1}\right) -c_{X})+\zeta
_{2}\left( X_{m\sup }(t_{2}\right) -c_{X})\right\} \\
& =e^{-c_{X}\left( \zeta _{1}+\zeta _{2}\right) }\mathrm{E}\exp \left\{
\zeta _{1}X_{m\sup }(t_{1})+\zeta _{2}\left( X_{m\sup }(t_{2}\right)
\right\} ,
\end{align*}%
where%
\begin{equation*}
\log \mathrm{E}\exp \{\zeta _{1}X_{m\sup }(t_{1})+\zeta _{2}X_{m\sup
}(t_{2})\}=\sum_{j=1}^{m}\log \mathrm{E}\exp \{\zeta _{1}X_{j}(t_{1})+\zeta
_{2}X_{j}(t_{2})\},
\end{equation*}%
and $\log \mathrm{E}\exp \{\zeta _{1}X_{j}(t_{1})+\zeta
_{2}X_{j}(t_{2})\},j=1,..,m$ are given by (\ref{4.20}).

We can construct log-variance-gamma scenarios for a more general class of
infinite superpositions of stationary variance gamma OU-type processes (\ref%
{sup1}), where $X_{j}(t),j=1,...,$ are independent stationary processes with
marginals $VG\left( \kappa _{j},\alpha ,\beta ,\mu _{j}\right) .$ Using
notation (\ref{IS}), we consider the class of processes%
\begin{equation*}
\mathbb{IS\{}VG\left( \sum_{j=1}^{\infty }\kappa _{j},\alpha ,\beta
,\sum_{j=1}^{\infty }\mu _{j}\right) ;\mu _{j}+\frac{2\beta }{\gamma ^{2}}%
\kappa _{j};\frac{2}{\gamma ^{2}}\left( 1+2\left( \frac{\beta }{\gamma }%
\right) ^{2}\right) \kappa _{j}\}.
\end{equation*}

Then the statement of the Theorem 15 remains true with $\mu
=\sum_{j=1}^{\infty }\mu _{j},\kappa =\sum_{j=1}^{\infty }\kappa _{j},$ and 
\begin{align*}
M\left( \zeta _{1},\zeta _{2};\left( t_{1}-t_{2}\right) \right) & =\mathrm{E}%
\exp \left\{ \zeta _{1}\left( X_{\sup }(t_{1}\right) -c_{X})+\zeta
_{2}\left( X_{\sup }(t_{2}\right) -c_{X})\right\} \\
& =e^{-c_{X}\left( \zeta _{1}+\zeta _{2}\right) }\mathrm{E}\exp \left\{
\zeta _{1}X_{\sup }(t_{1})+\zeta _{2}\left( X_{\sup }(t_{2}\right) \right\} ,
\end{align*}%
where%
\begin{equation*}
\log \mathrm{E}\exp \{\zeta _{1}X_{\sup }(t_{1})+\zeta _{2}X_{\sup
}(t_{2})\}=\sum_{j=1}^{\infty }\log \mathrm{E}\exp \{\zeta
_{1}X_{j}(t_{1})+\zeta _{2}X_{j}(t_{2})\},
\end{equation*}%
and $\log \mathrm{E}\exp \{\zeta _{1}X_{j}(t_{1})+\zeta
_{2}X_{j}(t_{2})\},j=1,..,...$ are given by (\ref{4.20}).

\section{Connections and prospects}

Both papers Musy and Barcy (2002) and Barral and Jin (2012) (see also their
references) introduce multifractal random measures $\mu $ as a limit of
positive martingales $\mu _{j\text{ }}$defined in a framework of
log-infinitely divisible cascades constructed as independently scattered
random measures on some cones on the plane. In particularly, Barral and Jin
(2012) extended some classical results valid for canonical multiplicative
cascades to exact scaling of log-infinitely divisible cascades.

If $\psi (z)$ is the characteristic L\'{e}vy exponent with L\'{e}vy triplet $%
(a,d,\nu )$, see (\ref{eq:Levy-Khinchine.exponent.cond}), and using notation
of the paper, let $\varphi (q)=\log _{2}\mathrm{E}(W^{q})-(q-1)=\psi (-%
\mathrm{i}q)-(q-1),$ for some infinitely divisible random variable $\ W$,
which generates cascade, then

(i) the necessary and sufficient condition for non-generacy of $\mu $, is of
the form: $\varphi ^{\prime }(1^{-})<0,$ and (ii) the necessary and
sufficient condition for $\mathrm{E}$ $(\left\Vert \mu \right\Vert
^{q})<\infty ,$ is of the form: $\varphi (q)<0,q>1.$ Also, if $\psi (-%
\mathrm{i}2)<\infty ,$ the increments of limiting multifractal measure is
stationary process with long-range dependence, see again Barral and Jin
(2012).

Bacry and Muzy's construction uses other shapes for the cone, but in the
notation above the condition of non-generacity is of the form i), while the
condition of $L_{2}$ convergence and $\mathrm{E}$ $(\left\Vert \mu
\right\Vert ^{2})<\infty ,$ is of the form: $\bar{\psi}(2)<1,$ where $\bar{%
\psi}(q)$ is the Laplace exponent of L\'{e}vy-Khintchin representation of
some infinitely divisible random variable. In this case no long-range
dependence between the increments of the multifractal measure, and in order
to have long-range dependence they used the so-called multifractal random
walk, that is superposition of fractional Brownian \ \ \ motion and limiting
multifractal process, assuming that they are independent.

Our construction has connection to both papers. Firstly, we present general
results on $\mathcal{L}_{q}$ convergence (Theorems 1 and 2) without any
assumptions about log-infinitely divisibility of mother process. These
results are more general then results of the above papers. To see this, one
can apply these results (for $q=2)$ for the geometric stationary diffusion
mother process, in which cases several scenarios are possible, including
log-beta scenario, which is not log-infinitely divisible, see [6] for more
details. Both short-range dependence and long-range dependence potentially
covered by Theorems 1 and 2. Then we consider the geometric OU processes,
which have log-self-decomposable marginal distributions, this is a subclass
of log-infinitely divisible distributions, and inclusion is strict. In this
case our results are less general in terms of possible scenarios, as well as
our conditions, see Theorem 4. In particular our condition for log-gamma
scenario required $\alpha >2$ (see Theorem 9), while in the framework of the
paper Musy and Barcy (2002) for log-gamma scenario one needs only $\alpha
>1. $ Also, for a $\alpha $-stable OU process, the results Musy and Barcy
(2002) and Barral and Jin (2012) hold for $\alpha \in (0,2),$ while our
condition (6.12) does not hold. Next, the results of the papers Musy and
Barcy (2002) and Barral and Jin (2012) can be applied for discrete
infinitely divisible distributions, i.e., to get log-poisson scenario, while
our results of sections 6 and 7 can not be applied for discrete
distributions, since they are not self-decomposable. However by using
results of sections 2 and 3, one can obtain log-poisson scenarios (among the
others) by using the multiplicative products of ergodic birth-death
processes, see [5] for details. As far as dependence is concern our approach
allows to model both short- and long-range dependence, see sections 6 and 7.

In the same spirit one can obatin the log Meixner or more generally log-z
multifractal scenario (see Anh, Leonenko and Shieh 2008a) or log-Euler's
gamma multifractal scenario (see Anh, Leonenko and Shieh 2008b). In
principle, it is possible to obtain the log-hyperbolic scenarios for which
there exist exact forms of L\'{e}vy measures of the OU process and the BDLP L%
\'{e}vy process; however some analytical work is still to be carried out.
This will be done elsewhere.

\section*{Acknowledgements}

The authors wish to thank two referees and the associate editor for the
comments and suggestions which have led to the improvement of the manuscript.%
\newline

{\normalsize N.N. Leonenko partially supported by grant of the European
commission PIRSES-GA-2008-230804 (Marie Curie), projects MTM2009-13393, by
projects MTM2012-32674 of the Ministry of Education and Science of Spain,
and P09-FQM-5052 of the Andalousian CICE, Spain.}

\bigskip

\bigskip

\end{document}